\newtheorem{proposition}{Proposition}
\newtheorem{remark}[proposition]{Remark}
\newtheorem{theorem}[proposition]{Theorem}
\newtheorem{definition}[proposition]{Definition}
\definecolor{darkgreen}{rgb}{0,0.6,0}
\newcommand{\deltaSNR}{\mathrm{\Delta SNR}}
\newcommand{\dB}{\mathrm{\,dB}}
\newcommand{\dist}{\mathrm{d}}
\newcommand {\mean}{\mathrm{mean}}
\newcommand {\diam}{\mathrm{diam}}
\newcommand {\pt}{\mathrm{pt}}
\newcommand {\prox}{\mathrm{prox}}
\newcommand {\argmin}{\mathrm{argmin}}
\newcommand {\TV}{\mathrm{TV}}
\newcommand {\TGV}{\mathrm{TGV}}
\newcommand {\STGV}{\mathrm{S}\text{-}\mathrm{TGV}}
\newcommand {\TGVat}{\mathrm{TGV}_\alpha^2}
\newcommand {\STGVat}{\STGV_\alpha^2}
\newcommand{\IC}{\mathrm{IC}}
\newcommand{\ds}{D_{\mathrm{S}}}
\newcommand{\dpt}{D_{\mathrm{pt}}}
\newcommand{\dc}{D_c}
\newcommand{\dcc}{D_{cc}}
\newcommand{\Ac}{\mathcal{A}}
\newcommand{\Nc}{\mathcal{N}}
\newcommand{\Mc}{\mathcal{M}}
\newcommand{\Sc}{\mathcal{S}}
\newcommand{\RR}{\mathbb{R}}
\newcommand{\J}{\mathrm{J}}
\newcommand{\symgrad}{\mathcal{E}}
\newcommand{\sym}{\mathrm{sym}}
\newcommand{\traj}{\mathrm{traj}}
\begin{document}

\title{Non-smooth variational regularization for processing manifold-valued data}
\author{M. Holler and A. Weinmann}

\maketitle

\newcommand{\awei}[2]{{\color{red}AW: #1 }{\color{blue} #2}}
\newcommand{\mh}[2]{{\color{red}MH: #1 }{\color{blue} #2}}

\begin{abstract}
	Many methods for processing scalar and vector valued images, volumes and other data in the context of inverse problems are based on variational formulations. Such formulations require appropriate regularization functionals that model expected properties of the object to reconstruct. Prominent examples of regularization functionals in a vector-space context are the total variation (TV) and the Mumford-Shah functional, as well as higher-order schemes such as total generalized variation models.
	Driven by applications where the signals or data live in nonlinear manifolds, there has been quite some interest in developing analogous methods for nonlinear, manifold-valued data recently. In this chapter, we consider various variational regularization methods for manifold-valued data. In particular, we consider TV minimization as well as higher order models such as total generalized variation (TGV). Also, we discuss (discrete) Mumford-Shah models and related methods for piecewise constant data. We develop discrete energies for denoising and report on algorithmic approaches to minimize them. Further, we also deal with the extension of such methods to incorporate indirect measurement terms, thus addressing the inverse problem setup.
	Finally, we discuss wavelet sparse regularization for manifold-valued data.
\end{abstract}

\section{Introduction}

Any measurement process, either direct or indirect, produces noisy data. While for some setups, the noise can safely be ignored, for many others it severely hinders an interpretation or further processing of the data of interest. In addition, measurements might also be incomplete such that again direct usability of the measured data is limited. 

Variational regularization, i.e., a postprocessing or reconstruction of the quantity of interest via the minimization of an energy functional, often allows to reduce data corruption significantly. The success of such methods heavily relies on suitable regularization functionals and, in particular in the broadly relevant situation that the quantity of interest is sparse in some sense, non-smooth functionals are known to perform very well.
Prominent and well-established examples of non-smooth regularization functional in the context of vector-space data are for instance the total variation functional and higher-order extensions such as total generalized variation, the Mumford-Shah functional and the $\ell^1$- or $\ell^0$-penalization of coefficients w.r.t. some wavelet basis.

When it comes to data in a non-linear space such as a manifold, the situation is different and the development of appropriate analogues of non-smooth regularization functionals in this setting is currently an active topic of research with many challenges still to be overcome. Most of these challenges are related to the nonlinearity of the underlying space, which complicates the transfer of concepts from the  context of vector-space regularizers, such as measure-valued derivatives or basis transforms, but also their numerical realization.

On the other hand, applications where the underlying data naturally lives in a non-linear space are frequent and relevant. A prominent example is diffusion tensor imaging (DTI), which is a technique to quantify non-invasively the diffusional characteristics of a specimen \cite{basser1994mr, johansen2009diffusion}.
Here the underlying data space is the set of positive (definite) matrices, which becomes a Cartan-Hadamard manifold when equipped with the corresponding Fisher-Rao metric.  Another example is interferometric synthetic aperture radar (InSAR) imaging, which is an important airborne imaging modality for geodesy \cite{massonnet1998radar}. Often the InSAR image has the interpretation of a wrapped periodic version of a digital elevation model \cite{rocca1997overview} and the underlying data space is the unit circle ${\mathbb S}^1.$
Further examples are nonlinear color spaces for image processing, as for instance the LCh, HSV and HSL color spaces (where the underlying manifold is the cylinder $\mathbb{R}^2 \times \mathbb S^1$) and chromaticity-based color spaces where the underlying manifold is 
${\mathbb S}^2 \times \mathbb R$, see \cite{chan2001total}.
Also, the rotation group $\mathrm{SO}(3)$ appears as data space in the context of 
aircraft orientations and camera positions \cite{rahman2005multiscale}, protein alignments \cite{green2006bayesian}, and the tracking of 3D rotational data arising in robotics \cite{drummond2002real}.
Data in the euclidean motion group $\mathrm{SE}(3)$ may represent poses \cite{rosman2012group} and sphere-valued data appear as orientation fields of three dimensional images \cite{rezakhaniha2012experimental}. 
Finally, shape-space data \cite{Michor07,berkels2013discrete} constitutes manifold-valued data as well. 

Motivated by such applications, we review existing non-smooth regularization techniques for non-linear geometric data and their numerical realization in this chapter. Following the majority of existing approaches, we will concentrate on discrete signals in a finite difference setting, which is appropriate particularly for image processing tasks due to the mostly Cartesian grid domains of images.
We start with total variation regularization in Section~\ref{sec:TV}, which can be transferred to a rather simple yet effective approach for non-linear data with different possibilities for a numerical realization. With the aim of overcoming well-known drawbacks of TV regularization, in particular so-called staircasing effects, we then move to higher-order functionals in Section~\ref{sec:higherOandTGV}, where the goal is to provide a model for piecewise smooth data with jumps. 
Next, having a similar scope, we discuss different models for Mumford-Shah regularization and their algorithmic realization (using concepts of dynamic programming) in Section~\ref{sec:MumSha}.
Indirect measurements in the context of manifold valued data are then the scope of Section~\ref{sec:InvProb}, where we consider a regularization framework and algorithmic realization that applies to the previously defined approaches. Finally, we deal with 
wavelet sparse regularization of manifold valued data
in Section~\ref{sec:WavSparse} where we consider $\ell^1$ and $\ell^0$ type models and their algorithmic realization.

\section{Total Variation Regularization of Manifold Valued Data}
\label{sec:TV}

For scalar data, total variation regularization was early considered by Rudin, Osher and Fatemi \cite{rudin1992nonlinear} and by Chambolle and Lions \cite{chambolle1997inftv_mh} in the 1990s.
A major advantage of total variation regularization compared to classical Tikhonov regularization is that it preserves sharp edges \cite{strong2003edge,gousseau2001natural} which is the reason for a high popularity of TV regularization in particular in applications with image-related data.
The most direct application of TV regularization is denoising, where $\ell^2$ data terms have originally been used in \cite{rudin1992nonlinear} (and are well-suited in case of Gaussian noise) and $\ell^1$ data terms are popular due to robustness against outliers and some favorable analytical properties \cite{alliney1992digital,nikolova2002minimizers,chan2005aspects}. An extension of TV for vector-valued data has early been considered in \cite{sapiro1996anisotropic} and we refer to \cite{Duran2016color_tv} for an overview of different approaches.

This section reviews existing extensions of TV regularization to manifold-valued data. 
In the continuous setting, such an extension has been considered analytically in \cite{GM06,GMS93}, where \cite{GMS93} deals with the $\mathbb S^1$ case and \cite{GM06} deals with the general case using the notion of cartesian currents. There, in particular, the existence of minimizers of certain TV-type energies in the continuous domain setup has been shown.
In a discrete, manifold-valued setting, there is a rather straight forward definition of TV. 
Here, the challenge is more to develop appropriate algorithmic realizations. Indeed, many of the successful numerical algorithms for TV minimization in the vector space setting, such as \cite{chambolle2004algorithm,chambolle2011first,goldstein2009split} and \cite{nikolova2004variational} for $\ell^1$-TV, rely on smoothing or convex duality, where for the latter no comprehensive theory is available in the manifold setting.

\subsection{Models}
\label{sec:TVmodel}
For univariate data of length $N$ in a finite dimensional Riemannian manifold $\Mc$, the (discrete) TV denoising problem with $\ell^q$-type data fidelity term reads as
\begin{equation} \label{eq:abstract_min_problem_tv_manifold_univariate}
\argmin_{x \in \mathcal M^{N}} \Big\{ \frac{1}{q} \sum_{i =1}^{N}  \dist(x_{i},f_{i})^q +  \alpha \sum_{i=1}^{N-1} \dist(x_{i},x_{i+1})
\Big\}.
\end{equation}
Here, $f=(f_{i})_{i=1}^N$ denotes the observed data and $x=(x_{i})_{i=1}^N$ is the argument to optimize for. Further, $q \in [1,\infty)$ is a real number and $\alpha>0$ is a regularization parameter controlling the trade of between data fidelity and the regularity. 
The symbol $\dist(y,z)$ denotes the distance induced by the Riemannian metric on the manifold $\mathcal M.$
We note that in the euclidean case $\mathcal M=\mathbb R^d,$ the above distance to the data $f$ corresponds to the $\ell^q$ norm. For noise types with heavier tails (such as Laplacian noise in the euclidean case,) $q=1$ is a good choice.
We further point out that, in the scalar case $\mathcal M = \mathbb R$, the expression $\sum_{i=1}^{N-1} d(x_{i},x_{i+1})$ defines the total variation of the sequence $x$ interpreted as a finite sum of point measures. 

In the bivariate case, a manifold version of TV denoising for signals in $\Mc^{N \times M}$ is given by
\begin{align} \label{eq:abstract_min_problem__tv_manifold_bivariate}
\argmin_{u \in \mathcal \Mc^{N\times M}} \Big\{
 \tfrac{1}{q} &\sum\nolimits_{i,j}  \dist(x_{i,j},f_{i,j})^q 
\\&+  \alpha \sum\nolimits_{i,j} \left( \dist(x_{i,j},x_{i+1,j})^p + \dist(x_{i,j},x_{i,j+1})^p\right)^{1/p} \Big\} .
\notag
\end{align}
Note that here and on the following, we will frequently omit the index bounds in finite-length signals and sums for the sake of simplicity, and always implicitly set all scalar-valued summands containing out-of-bound indices to $0$.
In \eqref{eq:abstract_min_problem__tv_manifold_bivariate}, the cases $p=1$ and $p=2$ are most relevant, where $p=1$ has computational advantages due to a separable structure and $p=2$ is often used because is corresponds to an isotropic functional in the continuous, vector-space case. We note however that, in the TV case, the effects resulting from anisotropic discretization are not severe. Moreover, they can be almost completely eliminated 
by including further difference directions, such as diagonal differences. For details on including 
further difference directions we refer to Section~\ref{sec:MumSha} 
(discussing reduction of anisotropy effects for the Mumford-Shah case in which case such effects are more relevant.)

Note that if we replace the distance term in the TV component of \eqref{eq:abstract_min_problem_tv_manifold_univariate}
by the squared distance
(or remove the square root for $p=2$ in \eqref{eq:abstract_min_problem__tv_manifold_bivariate}), we end up with a discrete model of classical $H^1$ regularization. Further, we may also replace the distance term in the regularizer by $h \circ \dist$ where $h$ can for instance be the so-called Huber function which is a parabola for small arguments smoothly glued with two linear functions for larger arguments. Using this, we end up with models for Huber regularization, see \cite{weinmann2014total} for details.

\subsection{Algorithmic Realization}
\label{sec:TValgo}
As mentioned in the introduction to this section,
the typical methods used for TV regularization in vector spaces are based on convex duality. The respective concepts are not available in a manifold setting.
However, there are different strategies to solve \eqref{eq:abstract_min_problem_tv_manifold_univariate} and \eqref{eq:abstract_min_problem__tv_manifold_bivariate}, and we briefly review some relevant strategies in the following.

The authors of \cite{SC11,CS13} consider TV regularization for $\mathbb S^1$-valued data and develop a lifting approach, i.e., they lift functions with values in $\mathbb S^1$ to functions with values in the universal covering $\mathbb R$ of $\mathbb S^1$, lifting the involved functionals at the same time such that periodicity of the data is respected. This results in a nonconvex problem for real valued data (which still reflects the original $\mathbb S^1$ situation), which can then algorithmically be approached by using convex optimization techniques on the convex relaxation of the nonconvex vector space problem.
We note that the approach is a covering space approach which relies on the fact that the covering space is a vector space which limits its generalization to general manifolds. In connection with 
$\mathbb S^1$ valued data we also point out the paper \cite{storath2016exact}
which provides an exact solver for the univariate $L^1$-$\TV$ problem \eqref{eq:abstract_min_problem_tv_manifold_univariate} with $q=1$.
 
For general manifolds there are three conceptually different approaches to TV regularization. 
The authors of \cite{LSKC13} reformulate the TV problem as a multi-label optimization problem.
More precisely, they consider a lifted reformulation in a vector-space setting, where the unknown takes values in the space of probability measures on the manifold (rather than the manifold itself), such that it assigns a probability for each given value on the manifold. Constraining the values of the unknown to be delta peaks, this would correspond to an exact reformulation whereas dropping this constraint yields a convex relaxation. After discretization, the unknown takes values in the unit simplex assigning a probability to each element of a discrete set of possible values. 
This corresponds to a lifting of the problem to higher dimensions, where the number of values the unknown is allowed to attain defines the dimensionality of the problem. Having a vector-space structure available, the lifted problem is then solved numerically using duality-based methods. We refer to \cite{LSKC13} for details and to \cite{Vogt2019lifting_mh} for an overview of research in that direction and extensions.

Another approach can be found in the paper \cite{grohs2016total}. 
There, the authors employ an iteratively reweighted least squares (IRLS) algorithm to the isotropic discrete TV functional \eqref{eq:abstract_min_problem__tv_manifold_bivariate}. 
The idea of the IRLS is to replace the distance terms in the TV regularizer by squared distance terms and to introduce a weight for each summand of the regularizer. Then, fixing the weights, the problem is differentiable and can be solved using methods for differentiable functions 
such as a gradient descent scheme. 
In a next step, the weights are updated where a large residual part of a summand results in a small weight, and the process is iterated. This results in an alternating minimization algorithm. 
The authors show convergence in the case of Hadamard spaces and for data living in a half-sphere. 
We mention that IRLS minimization is frequently applied for recovering sparse signals and that it has been also applied to scalar TV minimization in \cite{rodriguez2006iteratively}.
In connection with this, we also mention the paper \cite{steidl16half_quadratic} which considers half-quadratic minimization approaches that are generalizations of \cite{grohs2016total}.

Finally, the approach of \cite{weinmann2014total} to TV regularization       
employs iterative geodesic averaging to implement cyclic and parallel proximal point algorithms. 
The main point here is that the appearing proximal mappings can be analytically computed and the resulting algorithms exclusively perform iterative geodesic averaging. This means that only points on geodesics have to be computed. We will elaborate on this approach in the following.
In connection with this, we also mention the paper \cite{baust2016combined}
where a generalized forward-backward type algorithm is proposed to solve a related problem in the context of DTI; see also \cite{baust2015total, stefanoiu2016joint} in the context of shape spaces. 

The approach of \cite{weinmann2014total} relies on the concepts of cyclic proximal point algorithms (CPPAs) and parallel proximal point algorithms (PPPA) in a manifold.  
A reference for cyclic proximal point algorithms in vector spaces is \cite{Bertsekas2011in}.
In the context of nonlinear spaces, the concept of CPPAs was first proposed in \cite{bavcak2013computing}, where it is employed to compute means and medians in Hadamard spaces.
In the context of variational regularization methods for nonlinear, manifold-valued data,
they were first used in \cite{weinmann2014total}, which also proposed the PPPA in the manifold setting.

\noindent \textbf{CPPAs and PPPAs.} The idea of both CPPAs and PPPAs is to decompose a functional $F: \Mc^N \to \mathbb R$ to be minimized into 
basic atoms $(F_i)_{i=1}^K$ and then to compute the proximal mappings of the atoms $F_i$ iteratively. For a CPPA, this is done in a cyclic way, and for a PPPA, in a parallel way. 
More precisely, assume that 
\begin{equation}\label{eq:Decomp4CPPA}
F = \sum\nolimits_{i=1}^K F_i
\end{equation}
and consider the proximal mappings \cite{moreau1962fonctions, ferreira2002proximal, azagra2005proximal} $\prox_{\lambda F_i}: \mathcal{M}^N \rightarrow \mathcal{M}^N$ given as
\begin{align} \label{eq:prox_mapping_abstract}
\prox_{\lambda F_i} (x) = \argmin_y \, F_i(y) + 
\tfrac{1}{2 \lambda}  \sum\nolimits_{j=1}^N\dist(x_j,y_j)^2.  
\end{align}
One cycle of a CPPA then consists of applying each proximal mapping $\prox_{\lambda F_i}$ once in a prescribed order, e.g., $\prox_{\lambda F_1},$ $\prox_{\lambda F_2},$ $\prox_{\lambda F_3}, \ldots,$ or, generally,
$\prox_{\lambda F_{\sigma(1)}},$ $\prox_{\lambda F_{\sigma(2)}},$ $\prox_{\lambda F_{\sigma(3)}},$ $\ldots,$ 
where the symbol $\sigma$ is employed to denote a permutation.
The cyclic nature is reflected in the fact that the output of  $\prox_{\lambda F_{\sigma(i)}}$ is used as input for $\prox_{\lambda f_{\sigma(i+1)}}.$ 
Since the $i$th update is immediately used for the $(i+1)$th step, it can be seen as a Gauss-Seidel-type scheme. We refer to Algorithm~\ref{alg:cppa} for its implementation in pseudocode.

\begin{algorithm}[t]
  \begin{algorithmic}[1]
\State \textbf{CPPA}($x^0,(\lambda_k)_k,(\sigma(j))_{j=1}^K)$

\State $k=0$, $x_0^0 = x^0$
\State \quad \textbf{repeat} until stopping criterion fulfilled
 \State \qquad \textbf{for}$j=1,\ldots,K$
	\State \quad \qquad $x_{j}^k = \prox_{\lambda_k F_{\sigma(j)}} (x_{j-1}^k) $
\State \qquad $x^{k+1}_0 = x^k _{K},$
 \quad $k\gets k+1$
\State \Return{$x^k_0$}
\end{algorithmic}
\caption{CPPA for solving $\min_x F(x)$ with $F = \sum_{j=1}^K F_j$\label{alg:cppa}}
\end{algorithm}

\begin{algorithm}[t]
  \begin{algorithmic}[1]
\State \textbf{PPPA}($x^0,(\lambda_k)_k)$

\State $k=0$,
\State \quad \textbf{repeat} until stopping criterion fulfilled
 \State \qquad \textbf{for}$j=1,\ldots,K$
	\State \quad \qquad  $x^{k+1}_j = \prox_{\lambda_k F_{j}}(x^{k})$
\State \qquad $x^{k+1} = \mean_j \left(x^{k+1}_j\right),$
 \quad $k\gets k+1$
\State \Return{$x^k$}
\end{algorithmic}
\caption{PPPA for solving $\min_x F(x)$ with $F = \sum_{j=1}^K F_j$\label{alg:pppa}}
\end{algorithm}

A PPPA consists of applying the proximal mapping to each atom $F_i$ to the output of the previous iteration $x^{k}$ in parallel and then averaging the results, see Algorithm~\ref{alg:pppa}. Since it performs the elementary update steps, i.e., the evaluation of the proximal mappings, in parallel it can be seen as update pattern of Jacobi type.
In Algorithm \ref{alg:pppa}, the symbol \emph{mean} denotes the generalization of the arithmetic average to a Riemannian manifold, which is the well known intrinsic mean, i.e., 
given $z_1,\ldots,z_K$ in $\mathcal M,$ a mean $z^\ast \in \mathcal M$ is defined by
(cf. \cite{karcher1977riemannian,kendall1990probability, pennec2006riemannian, fletcher2007riemannian})
\begin{align} \label{eq:DefMean}
z^\ast = \mean_j \left(z_j\right)  = \argmin_{z \in \Mc} \sum\nolimits_{j=1}^K \dist(z,z_j)^2. 
\end{align}
Please note that this definition is employed component-wise for $x^{k+1}$ in Algorithm \ref{alg:pppa}.
We note that, if the $(F_i)_i$ are lower semi continuous, both the minimization problem for the proximal mapping and for the mean admit a solution. On general manifolds, however, the solution is not necessarily unique. For arguments whose points are all contained in a small ball (whose radius depends on the sectional curvature $\Mc$) it is unique, see \cite{ferreira2002proximal, azagra2005proximal,kendall1990probability, karcher1977riemannian} for details. This is a general issue in the context of manifolds that are -- in a certain sense -- a local concept
involving objects that are often only locally well defined. In case of ambiguities, we hence consider the above objects as set-valued quantities.

During the iteration of both CPPA and PPPA, the parameter $\lambda_k$ of the proximal mappings is successively decreased. 
In this way, the penalty for deviation from the previous iterate is successively increased.
It is chosen in a way such that the sequence $(\lambda^k)_k$ is square-summable but not summable.  
Provided that this condition holds, the CPPA can be shown to converge to the optimal solution of the underlying minimization problem, at least in the context of Hadamard manifolds and convex $(F_i)_i$, see \cite[Theorem 3.1]{bacak2014convex}. The same statement holds for the PPPA, see \cite[Theorem 4]{weinmann2014total}. The mean can be computed using a gradient descent or a Newton scheme.
To reduce the computation time further, it has been proposed in \cite{weinmann2014total}
to replace the mean by another construction (known as geodesic analogues in the subdivision context \cite{wallner2005convergence}) which is an approximation of the mean that is computationally less demanding. As above, in the context of Hadamard manifolds and convex $(F_i)_i,$ the convergence towards a global minimizer is guaranteed, see \cite[Theorem 7]{weinmann2014total}.
For details we refer to the above reference.

\begin{figure}[t]
	\def\figfolder{experiments/colorHcl/}
	\def\figwidth{0.24\textwidth}

	\hfill
	\includegraphics[width= \figwidth]{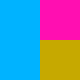}
	\hfill
	\includegraphics[width= \figwidth]{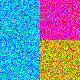}		
	\hfill
	\includegraphics[width= \figwidth]{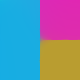}
	\hfill
	\includegraphics[width= \figwidth]{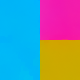}
	\caption{The effect of $\ell^2$-TV denoising in LCh space ($\alpha = \protect\input{\figfolder alpha.txt}$).
		{\em Left.} Ground truth.
		{\em Middle left.}  Noisy input image corrupted by Gaussian noise on each channel. 
		{\em Middle right.} The $\ell^2$-TV reconstruction in linear space.
		{\em Right.}  
		The $\ell^2$-TV reconstruction in the nonlinear LCh color space. %
		Using the distance in the non-flat LCh metric can lead to higher reconstruction quality. 
	}		
	\label{fig:LCh}
\end{figure}

\noindent \textbf{Proximal mappings for the atoms of the TV functions.} 
Now we consider a splitting of the univariate problem  \eqref{eq:abstract_min_problem_tv_manifold_univariate} and the bivariate problem \eqref{eq:abstract_min_problem__tv_manifold_bivariate} into basic atoms such that the CPPA and the PPPA can be applied. Regarding \eqref{eq:abstract_min_problem_tv_manifold_univariate} we use the atoms
\begin{align} \label{eq:tv_manifoldParallelUnivAlgoDecomposition}
F_1(x) := \tfrac{1}{q} \sum_{i=1}^{N} \dist(x_{i},f_i)^q, \quad  F_2(x) = \sum_{ \substack{i=1 \\ i \text{ odd}}}^{N-1} \dist(x_i,x_{i+1}), \quad  F_3(x) = \sum_{ \substack{i=1 \\ i \text{ even}}}^N \dist(x_i,x_{i+1}).
\end{align}
Regarding \eqref{eq:abstract_min_problem__tv_manifold_bivariate}, we consider the case $p=1$ and again define $F_1$ to be the data term, $F_2$ and $F_3$ to be a splitting of the sum $\sum_{i,j} \dist(x_{i,j},x_{i+1,j})$ into even and odd values of $i$ and $F_4$ and $F_5$ to be a splitting of the sum $\sum_{i,j} \dist(x_{i,j},x_{i,j+1})$ into even and odd values of $j$. 
With these splittings, all summands in the atom $(F_i)_i$ decouple such that the computation of the proximal mappings reduces to a point-wise computation of the proximal mappings of 
\begin{align}\label{eq:elemRed}
 x \mapsto g_1(x,f):= \frac{1}{q}\dist(x,f)^q \quad \text{ and } \quad (x_1,x_2)   \mapsto g_2(x_1,x_2) = \dist(x_1,x_2) .
\end{align}
From the splitting \eqref{eq:tv_manifoldParallelUnivAlgoDecomposition} 
(and its bivariate analogue below \eqref{eq:tv_manifoldParallelUnivAlgoDecomposition}) 
together with \eqref{eq:elemRed}
we see that within a PPPA all proximal mappings of the basic building blocks $g_1,g_2$ can be computed in parallel and the computation of each mean only involves $3$ points in the manifold $\mathcal M$ in the univariate setting and $5$ points in the multivariate setting.
For a CPPA we see that a cycle has length $3$ and $5$ in the univariate and bivariate situation, respectively, and that within each atom $F_i$ the proximal mappings of the respective terms of the form $g_1,g_2$ can be computed in parallel.

For the data term, the proximal mappings $\prox_{\lambda g_1}$ are explicit for $q=1$ and $q=2$ and, as derived in \cite{ferreira2002proximal}, are given as
\begin{align}\label{eq:tv_manifoldProxData}
(\prox_{\lambda g_1(\cdot ,f)})_{j}(x) =  [x,f]_t
\end{align}
\text{ where } 
\begin{align}\label{eq:t4data}
 t= \tfrac{\lambda}{1+\lambda} \ \text{ for } \ q=2, \qquad  
 t= \min \left(\tfrac \lambda {\dist(x,f)},1 \right) \ \text{  for } \ q=1.
\end{align}
Here, we use the symbol $[\cdot,\cdot]_t$ to denote the point reached after time $t$
on the (non unit speed) length-minimizing geodesic starting at the first argument reaching the second argument at time $1$.
(Note, that up to sets of measure zero, length minimizing geodesics are unique, and in the extraordinary case of non-uniqueness we pick may one of them.) 

Regarding $g_2$, it is shown in \cite{weinmann2014total} that the proximal mappings are given in closed form as
\begin{align}\label{eq:tv_manifoldProxReg}
\prox_{\lambda g_2 } ((x_1,x_2))=  ([x_1,x_2]_t, [x_2,x_1]_t), \quad \text{where }
	t = \min \left(\frac{\lambda \alpha} {\dist(x_{1},x_{2})},\frac{1}{2} \right).
\end{align}
Here, for each point, the result is a point on the geodesic segment connecting two arguments.

It is important to note that the point $p_t = [p_0,p_1]_t$ on the geodesic connecting two points 
	$p_0,p_1$ is given in terms of the Riemannian exponential map $\exp$ and its inverse denoted by $\log$ or $\exp^{-1}$ by
	\begin{align}\label{eq:FeodesicsInTermsOfLogExpHolWei}
	p_t = [p_0,p_1]_t = \exp_{p_0}(t \ \log_{p_0}p_1).
	\end{align}
	Here, $v:=\log_{p_0}p_1$ denotes that tangent vector sitting in $p_0$ such that $\exp_{p_0}v = p_1.$
	The tangent vector $v$ is scaled by $t,$ and then the application of the $\exp$-map yields $p_t.$
	More precisely, $\exp_{p_0}$ assigns the point $p_t=\exp_{p_0} tv $ to the tangent vector $tv$ by evaluating the geodesic starting in $p_0$ with tangent vector $tv$ at time $1$.
\begin{figure}[t]
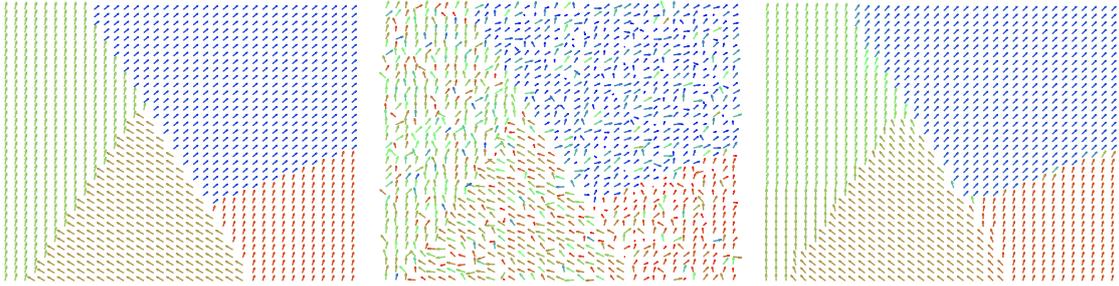

	\def\figfolder{./experiments/S2/}
	\def\figwidth{0.32\columnwidth}
	\includegraphics[width=\figwidth]{\figfolder S2_original-gen.pdf}
	\hfill
	\includegraphics[width=0.33\columnwidth]{\figfolder S2_noisy-gen.pdf}
	\hfill
	\includegraphics[width=\figwidth]{\figfolder S2_rec-gen.pdf}
	\caption{
		Denoising of an $\mathbb S^2$-valued image. The polar angle is coded both as length of the vectors and as color (red pointing towards the reader, blue away from the reader).
		\emph{Left.} Synthetic image.
		\emph{Center.} Noisy data (corrupted by von Mises-Fisher noise of level $\kappa =\protect\input{\figfolder kappa.txt}$).
		\emph{Right.} $\ell^{\protect\input{\figfolder p.txt}}$-TV regularization
		using $\alpha = \protect\input{\figfolder alpha.txt}.$ The noise is almost completely removed whereas the jumps are preserved.
	}	
	\label{fig:s2_exp}			
\end{figure}

We note that also the proximal mappings of the classical Tichanov regularizers as well as of the Huber regularizers mentioned above have a closed form representation in terms of geodesic averaging as well. Further, there are strategies to approximate intrinsic means by iterated geodesic averages to speed up the corresponding computations.
For details on these comments we refer to \cite{weinmann2014total}.

Plugging in the splittings and proximal mappings as above into the Algorithms \ref{alg:cppa} and \ref{alg:pppa} yields a concrete implementation for the TV-regularized denoising of manifold-valued data. Regarding convergence, we have the following result.

\begin{theorem} \label{thm:ConvergenceAlgA}
	For data in a (locally compact) Hadamard space and a parameter sequence $(\lambda_k)_k$ which is squared summable but not summable, the iterative geodesic averaging algorithms for TV-reglarized denosing (based on the CPPA, the PPPA, as well as the inexact approximative and fast variant of the PPPA) converge towards a minimizer of the $\ell^p$-$\TV$ functional. 	
\end{theorem}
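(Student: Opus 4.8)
The plan is to recognize the geodesic-averaging scheme as a genuine cyclic, respectively parallel, proximal point iteration for the functional $F$ to be minimized, and then to invoke the available abstract convergence theory in Hadamard spaces. Concretely, I would first fix the decomposition $F=\sum_{i=1}^{K}F_i$ from \eqref{eq:tv_manifoldParallelUnivAlgoDecomposition}, with $K=3$ for the univariate functional \eqref{eq:abstract_min_problem_tv_manifold_univariate} and $K=5$ for the bivariate functional \eqref{eq:abstract_min_problem__tv_manifold_bivariate} with $p=1$, and observe that within each atom $F_i$ the summands act on pairwise disjoint coordinate index sets. Hence the proximal map \eqref{eq:prox_mapping_abstract} of each $F_i$ decouples into a product of the elementary proximal maps of $g_1$ and $g_2$ from \eqref{eq:elemRed}, which by \eqref{eq:tv_manifoldProxData}--\eqref{eq:t4data} and \eqref{eq:tv_manifoldProxReg} are precisely the explicit geodesic-averaging operations executed in Algorithms~\ref{alg:cppa} and \ref{alg:pppa}. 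Therefore the three algorithms in the statement are literally a CPPA, a PPPA, and an inexact (mean-approximating) PPPA for $F$, and the task reduces to checking the hypotheses of the corresponding general convergence results.

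Next I would verify those hypotheses. In a Hadamard space the map $y\mapsto\dist(y,z)$ is convex for every fixed $z$, the map $(y_1,y_2)\mapsto\dist(y_1,y_2)$ is jointly convex, and $\dist(\cdot,\cdot)^2$ is even strictly convex; since $t\mapsto t^q/q$ is convex and nondecreasing on $[0,\infty)$ for $q\in[1,\infty)$, it follows that the data atom $F_1$ and the difference atoms $F_2,\dots,F_K$ are convex, and all of them are continuous --- in fact the difference atoms are globally $1$-Lipschitz. Moreover $F$ is coercive, because the data term blows up as $\dist(x_i,f_i)\to\infty$; combined with lower semicontinuity and local compactness (Hopf--Rinow), this yields the existence of a minimizer, and by strict convexity of $\dist(\cdot,\cdot)^2$ both the proximal maps and the intrinsic means appearing in the iteration are single-valued on a Hadamard space, so none of the set-valued ambiguities discussed earlier occur here. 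I would also use that the iterate sequence is bounded, hence contained in a relatively compact region by local compactness; this is what upgrades the merely local Lipschitz continuity of $F_1$ for $q>1$ to the effective global Lipschitz bound that the convergence proofs use.

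With the decomposition identified and these properties in hand, convergence of the iterates towards a minimizer of \eqref{eq:abstract_min_problem_tv_manifold_univariate}, resp.\ \eqref{eq:abstract_min_problem__tv_manifold_bivariate}, then follows: for the CPPA from \cite[Theorem 3.1]{bacak2014convex}, for the PPPA from \cite[Theorem 4]{weinmann2014total}, and for the fast inexact PPPA (geodesic analogues of the mean in place of the Riemannian center of mass) from \cite[Theorem 7]{weinmann2014total}. In each case the crucial quantitative ingredient is exactly the assumption on $(\lambda_k)_k$: square-summability controls the total perturbation caused by successively penalizing deviation from the previous iterate, while non-summability prevents the proximal steps from stalling before a minimizer of $F$ is reached. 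I expect the only genuinely delicate point to be the fast inexact variant: there one additionally has to bound the discrepancy between the geodesic-analogue average and the true intrinsic mean and show it is summable against $(\lambda_k^2)_k$ (equivalently, fits the error tolerance of an inexact proximal point scheme) --- and it is this curvature-dependent estimate, rather than the convexity bookkeeping, that is the substantive part of the argument; I would simply invoke the corresponding estimate from the proof of \cite[Theorem 7]{weinmann2014total}.
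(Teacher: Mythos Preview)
Your proposal is correct and matches the paper's approach: the paper does not give a self-contained proof but simply refers to \cite{weinmann2014total}, after having already pointed (in the paragraphs preceding the theorem) to exactly the same abstract convergence results you invoke, namely \cite[Theorem 3.1]{bacak2014convex} for the CPPA and \cite[Theorems 4 and 7]{weinmann2014total} for the PPPA and its fast inexact variant. Your outline of identifying the geodesic-averaging updates with the proximal maps via \eqref{eq:tv_manifoldProxData}--\eqref{eq:tv_manifoldProxReg}, verifying convexity and lower semicontinuity of the atoms in a Hadamard space, and then applying those theorems is precisely the intended argument; in fact you supply more detail than the paper itself, which offloads the entire proof to the cited reference.
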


We further remark that the statement remains true when using the Huber potential mentioned above either as data term or for the regularization, as well as when using quadratic variation instead of $\TV.$ A proof of this statement and more details on the remarks can be found in \cite{weinmann2014total}.
\begin{figure}[t]
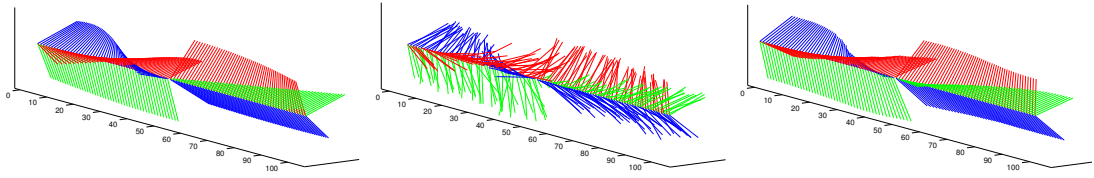

	\centering
	\def\figfolder{./experiments/SO3/}
	\includegraphics[width=0.32\textwidth]{\figfolder SO3_original-gen}
	\includegraphics[width=.32\textwidth]{\figfolder SO3_noisy-gen}
	\includegraphics[width=.32\textwidth]{\figfolder SO3_Parallel_Fast-gen}

	\caption{Result ({\em right}) of denoising an $\mathrm{SO}(3)$-valued noisy time-series ({\em center}) using the inexact parallel algorithm for 
		$L^2$-$\TV$ regularization with	$\alpha = \protect\input{\figfolder alpha.txt}.$
		({\em Left:} Ground truth.)
		Here, an element of $\mathrm{SO}(3)$ is visualized by the rotation of a tripod.
		We observe that the noise is removed and the jump is preserved. 	
	}
	\label{fig:so3}
\end{figure}

We illustrate the algorithms with some examples.
First we consider denoising in the LCh color space. As explained above, the underlying manifold is $\mathbb S^1 \times \mathbb R^2.$
The exponential and its inverse are given componentwise by the respective mappings on
$\mathbb R^2$ and $\mathbb S^1$. 
By \eqref{eq:FeodesicsInTermsOfLogExpHolWei}, this allows to compute the involved proximal mappings
via \eqref{eq:tv_manifoldProxData}, \eqref{eq:t4data} and \eqref{eq:tv_manifoldProxReg}, respectively.
We point out that in spite of the separability of the exponetial and its inverse, the proposed algorithm 
is in general not equivalent to performing the algorithm on $\mathbb R^2$ and $\mathbb S^1$ separately. 
The reason is that the parameter $t$ in  \eqref{eq:t4data} and \eqref{eq:tv_manifoldProxReg}
depend nonlinearly on the distance in the product manifold (except for $p,q = 2$).
In Figure~\ref{fig:LCh} we illustrate the denoising potential of the proposed scheme in the LCh space.
Here, the vector-space computation was realized using the split Bregman method for vectorial TV regularization \cite{goldstein2009split, getreuer2012rudin} and we optimized the parameters of both methods with respect to the peak signal to noise ratio.

As a second example we cosider noisy data on the unit sphere $\mathbb S^2$ (in $\mathbb R^3$) .
In Figure~\ref{fig:s2_exp}, we test the denoising potential of our algorithm on a noisy (synthetic) spherical-valued image.
As noise model on $\mathbb S^2,$ we use the von Mises-Fisher distribution having the probability density 
$
h(x) = c(\kappa) \exp (\kappa x \cdot \mu).
$
Here, $\kappa >0$ expresses the concentration around the mean orientation $\mu \in \mathbb S^2$
where a higher $\kappa$ indicates a higher concentration of the distribution and 
$c(\kappa)$ is a normalization constant. 
We observe in Figure~\ref{fig:s2_exp} that the noise is almost completely removed by TV minimization and that the edges are retained. 

In Figure~\ref{fig:so3} we consider an univariate signal with values in the
special orthogonal group $\mathrm{SO}(3)$ consisting of all orthogonal $3\times 3$ matrices with determinant one. We see that the proposed algorithm removes the noise and that the jump is preserved.
Finally, we consider real InSAR data \cite{massonnet1998radar,rocca1997overview} in Figure~\ref{fig:SAR}.
InSAR images consist of phase values such that the underlying manifold is the one-dimensional sphere $\mathbb S^1$.
The image is taken from \cite{rocca1997overview}. 
We apply total variation denoising using $\ell^2$ and $\ell^1$ data terms. 
We observe that TV regularization reduces the noise significantly.
The $\ell^1$ data term seems to be more robust to outliers than the $\ell^2$ data term.

\begin{figure}[t]
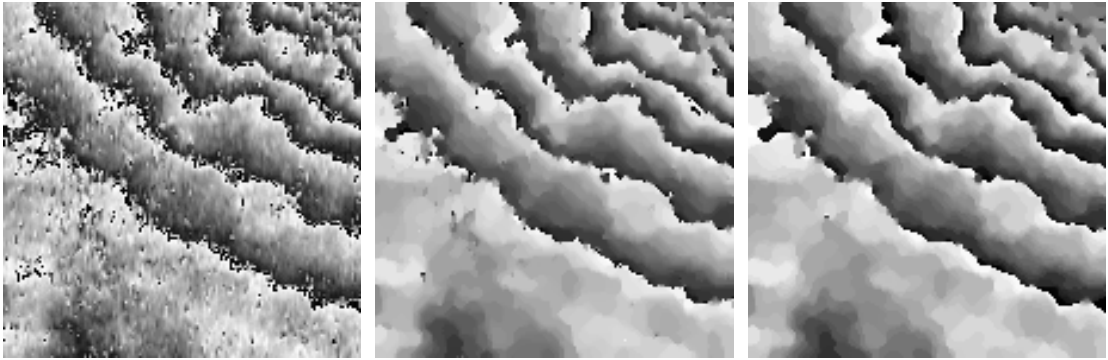

	\def\figfolder{./experiments/SAR/}
	\hfill	
	\includegraphics[width= .32\textwidth]{\figfolder vesuve_extract.png}
	\hfill
	\includegraphics[width= .32\textwidth]{\figfolder vesuve_extract_l2_tv.png}
	\hfill
	\includegraphics[width= .32\textwidth]{\figfolder vesuve_extract_l1_tv.png}
	\hfill
	\caption{
		Total variation denoising of an $\mathbb{S}^1$-valued InSAR image (real data, {\em left}) using
		$L^2$-$\TV$ regularization ($\alpha$=\protect\input{\figfolder alpha_2.txt}, {\em middle}) and 
		$L^1$-$\TV$ regularization ($\alpha$=\protect\input{\figfolder alpha_1.txt}, {\em right}). 
		Here, the circle $\mathbb{S}^1$ is represented as an interval with endpoints identified, i.e., white and black represent points nearby.		
		Total variation minimization reliably removes the noise while preserving the structure of the image. 
	}		
	\label{fig:SAR}
\end{figure}

\section{Higher Order Total Variation Approaches, Total Generalized Variation}
\label{sec:higherOandTGV}

It is well known in the vector space situation (and analytically confirmed for instance in \cite{Carioni18sparsity_mh,Boyer19representer_mh}) that TV regularization has a tendency to produce piecewise-constant results, leading to artificial jump discontinuities in case of ground truth data with smooth regions. Classical $H^1$ regularization avoids this effect. However, $H^1$ regularity does not allow for jump discontinuities, which can be seen as motivation for considering non-smooth higher order approaches. While second order TV regularization \cite{demengel1984bounded_heasian_mh,hinterberger2006boundedhessian_mh}, i.e., penalizing the Radon norm of the second order distributional derivative of a function, is a first attempt in this direction, one can show that functions whose second order distributional derivative can be represented by a Radon measure again cannot have jumps along (smooth) hypersurfaces \cite{holler14inversetgv_mh}. This disadvantage is no longer present when using a combination of first and second order TV via infimal convolution, i.e., 
\[ \IC_\alpha(u) = \inf_v \alpha_1\TV(u-v) + \alpha_0\TV^2(v),\] as originally proposed in \cite{chambolle1997inftv_mh}. Here, $\alpha=(\alpha_1,\alpha_0) \in (0,\infty) ^2$ are two weights. Regularization with $\TV$-$\TV^2$ infimal convolution finds an optimal additive decomposition of the unknown $u$ in two components, where one yields minimal cost for TV and the other one for second order TV. Extending on that, the (second order) total generalized variation (TGV) functional \cite{bredies2010tgv_mh} optimally balances between first and second order derivatives on the level of the gradient rather than the function, i.e., is given as
\[ \TGVat(u) = \inf _w \alpha_1\|\nabla u - w \|_\Mc + \alpha_0\|\symgrad w \|_\Mc,
\]
where $\symgrad w = 1/2(Jw + Jw^T)$ is a symmetrization of the Jacobian matrix field $Jw$ and again $\alpha=(\alpha_1,\alpha_0) \in (0,\infty) ^2$ are two weights.
This provides a more flexible balancing between different orders of differentiation and, in particular in situations when an optimal decomposition on the image level is not possible, further reduces piecewise constancy artifacts still present with $\TV$-$\TV^2$ infimal convolution, see \cite{bredies2010tgv_mh}.

Motivated by the developments for vector spaces, and due to the challenges appearing when extending them to manifold-valued data, several works deal with developing non-smooth higher order regularization in this setting. In the following we motivate and review some existing approaches and strive to present them in a common framework. Following the existing literature, we consider a discrete setting. 

\subsection{Models}
\label{sec:higherOandTGVmodel}
First, we define the above-mentioned higher order regularization functionals in a discrete, vector-space setting. To this aim, for $u = (u_{i,j})_{i,j} \in \RR^{N\times M}$, let 
$\delta_x:\RR^{N \times M} \rightarrow \RR^{(N-1) \times M}$, $(\delta_xu)_{i,j} = u_{i+1,j} - u_{i,j}$ and 
$\delta_y:\RR^{N \times M} \rightarrow \RR^{N \times (M-1) }$, $(\delta_y u)_{i,j} = u_{i,j+1} - u_{i,j}$
be finite differences (on a staggered grid to avoid boundary effects) w.r.t. the first- and second component, respectively. A discrete gradient, Jacobian and symmetrizied Jacobian are then given as
\begin{align*}
&\nabla u  = (\delta_x u,\delta_yu) ,\qquad  \J(v^1,v^2) = (\delta_xv^1,\delta_yv^2,\delta_y v^1,\delta_x v^2), \\
&\symgrad(v^1,v^2)  = (\delta_xv^1,\delta_yv^2, \tfrac{\delta_y v^1 + \delta_x v^2}{2}),
\end{align*}
respectively, where $v = (v^1,v^2) \in \RR^{(N-1) \times M} \times \RR^{N \times (M-1)}.$ 
We note that different components of $\nabla u,$ $ \J v,$ $ \symgrad v$ have different length. 
Using these objects, we define discrete versions of $\TV$, second order $\TV$, of $\TV$-$\TV^2$ infimal convolution and of $\TGV$ as %
\begin{equation} \label{eq:DefTV2ICTGVFunctionalsVec}
\begin{aligned}
\TV(u) &= \|\nabla u\|_1, \quad \TV^2(u)  = \| \J \nabla u\|_1, \quad   \IC_\alpha(u)   = \min _v \alpha_1\TV(u-v) + \alpha_0 \TV^2(v),\\
\TGVat(u)  & = \min_w \, \alpha_1 \|\nabla u - w \|_1 + \alpha_0 \|\symgrad w \|_1. 
\end{aligned}
\end{equation}
Here, $\|\cdot \|_1$ denotes the $\ell^1$ norm w.r.t. the spatial component and we take an $\ell^p$ norm (with $p \in [1,\infty)$) in the vector components without explicit mentioning, e.g., \newline $\| \nabla u \|_1 := \sum_{i,j} \left( (\delta _x u)_{i,j} ^p + (\delta _y u)^p_{i,j}\right) ^{1/p}$, where we again replace summands containing out-of-bound indices $0$. Note that the most interesting cases are $p=1$ due to advantages for the numerical realization and $p=2$ since this corresponds to isotropic functionals in the infinite-dimensional vector-space case, see for instance \cite{bredies2010tgv_mh} for TGV. Also note that $(\J\nabla u)_{i,j}$ is symmetric, that the symmetric component of $\symgrad w$ is stored only once and that we define
$\| \symgrad w \|_1 := \sum_{i,j} \left( (\delta _x w^1)_{i,j} ^p + (\delta _y w^2)^p_{i,j} + 2 (\frac{\delta _y w^1 + \delta_x w ^2}{2})^p_{i,j} \right) ^{1/p}$ to compensate for that.

Now we extend these regularizers to arguments $u \in \Mc^{N\times M}$
with $\Mc$ being a complete, finite dimensional Riemannian manifold with induced distance
 $\dist.$ 
 For the sake of highlighting the main ideas first, we start with the univariate situation $u = (u_i)_i \in \Mc^{N}$.

Regarding second order $\TV$, following \cite{Bavcak2016tv2_manifold_mh}, we observe (with $\delta$ the univariate version of $\delta_x$) that, for $u \in (\RR^d)^N$ and a norm  $\|\cdot \|$ on $\RR^d$,
\[ \|(\delta \delta u)_i\| = \|u_{i+1} - 2 u_i + u_{i-1}\| = 2 \|\frac{u_{i+1} + u_{i-1}}{2} - u_i\|,\]
where the last expression only requires averaging and a distance measure, both of which is available on Riemannian manifolds. Thus, a generalization of $\TV^2$ for $u \in \Mc^N$ can be given, for $u=(u_i)_i,$ by
\[ \TV^2(u) =  \sum\nolimits_i \dc (u_{i-1},u_i,u_{i+1})\quad \text{where } \dc ( u_-,u_\circ,u_+) = \inf _{c \in [u_-,u_+]_{\frac{1}{2}}} 2\dist(c,u_\circ). \]
Here, $\dc$ essentially measures the distance between the central data point $u_\circ$ and the geodesic midpoint of its neighbors, if this midpoint is unique, and the infimum w.r.t.\ all midpoints otherwise.
Similarly, we observe for mixed derivatives and $u \in (\RR^d)^{N \times N}$ that
\begin{align*}
 \|(\delta_y \delta_x u)\| %
   &= 2 \left\| \tfrac{u_{i+1,j} + u_{i,j-1} }{2} - \tfrac{u_{i,j}+u_{i+1,j-1}}{2} \right\|.
\end{align*}
An analogue for $u \in \Mc^{M\times M}$ is hence given by 
\[ \dcc (u_{i,j},u_{i+1,j},u_{i,j-1},u_{i+1,j-1}) = \inf_{\substack{
c_1 \in [u_{i+1,j},u_{i,j-1}]_{\frac{1}{2}}, 
 c_2 \in [u_{i,j},u_{i+1,j-1}]_{\frac{1}{2}}
 }
} 2\dist(c_1,c_2),
\]
and similarly for $\delta_x \delta_y$. Exploiting symmetry, we only incorporate $(\delta_y\delta _x u)$ and define
\begin{align*}
\TV^2(u) =& \sum_{i,j} \Big(  \dc(u_{i-1,j},u_{i,j},u_{i+1,j})^p + \dc(u_{i,j-1},u_{i,j},u_{i,j+1})^p \\
& + 2\dcc (u_{i,j},u_{i+1,j},u_{i,j-1},u_{i+1,j-1})^p  \Big)^{1/p}.
\end{align*}
This generalizes second order $\TV$ for manifold-valued data while still relying only on point-operations on the manifold. As will be shown in Section  \ref{sec:higherOandTGValgo}, numerical result for $\TV^2$ denoising show less staircasing  than $\TV$ denoising. However, it tends towards oversmoothing 
which is expected from the underlying theory and corresponding numerical results in the vector space case.

A possible extension of $\TV$-$\TV^2$ infimal-convolution to manifolds
is based on a representation in the linear space case given as
\[ \IC_\alpha(u) = \inf_v\, \alpha_1\TV(u- v ) + \alpha_0\TV^2(v) = (1/2)\inf_{v,w: u =\frac{v+w}{2}} \alpha_1\TV(v) + \alpha_0\TV^2(w) ,\]
where $u = (u_{i,j})_{i,j} \in (\RR^d)^{N \times M}.$ 
This representation was taken in \cite{Steidl17_infcon_manifold_mh,bergmann2018ictv_tgv_mh} and extended for $u = (u_{i,j})_{i,j} \in \Mc^{N \times M}$ (up to constants) via
\[ \IC (  u ) =   \tfrac{1}{2} \ \inf_{v,w} \ \alpha_1 \TV(v) +\alpha_0 \TV^2 (w) \quad \text{s.t. } u_{i,j} \in [[v_{i,j},w_{i,j}]]_{\frac{1}{2}},\]
where $v= (v_{i,j})_{i,j}$ and $w= (w_{i,j})_{i,j}.$  
Following \cite{Steidl17_infcon_manifold_mh}, we here use the symbol $[[v_{i,j},w_{i,j}]]$ instead of $[v_{i,j},w_{i,j}]$, where we define the former to include also non-distance minimizing geodesics. 

In order to generalize the second order TGV functional to a manifold setting, we consider \eqref{eq:DefTV2ICTGVFunctionalsVec} for vector spaces.
This definition (via optimal balancing) requires to measure the distance of (discrete) vector fields that are in general defined in different tangent spaces. One means to do so is to employ parallel transport for vector fields in order to shift different vector fields to the same tangent space and to measure the distance there.
(We note that the particular locations the vectors are shifted to is irrelevant since the values are equal.)
This approach requires to incorporate more advanced concepts on manifolds. 
Another possibility is to consider a \emph{discrete tangent space} of point tuples via the identification of $v = \log_a(b)$ as a point tuple $[a,b]$ (where $\log$ is the inverse exponential map), and to define a distance-type function on such point tuples. 
Indeed, the above identification is one-to-one except for points on the cut locus (which is a set of measure zero \cite{itoh1998cut_locus_dimension_mh}) and allows to identify discrete derivatives $(\delta_x u)_i = (u_{i+1} - u_{i}) = \log_{u_i} (u_{i+1}) $ as tupel $[u_i,u_{i+1}]$.
Choosing appropriate distance type functions, this identification allows to work exclusively on the level of point-operations and
one might say that the ``level of complexity'' of the latter approach is comparable with that of $\TV^2$ and $\IC.$  Furthermore, a version of the above parallel transport variant can be realized in the tupel setting as well (still incorporating more advanced concepts).
This approach was proposed in \cite{holler18tgvm_mh}; more precisely, an axiomatic approach is pursued in \cite{holler18tgvm_mh}  and  realizations 
via Schild's ladder (requiring only point operations) and parallel transport are proposed and shown to be particular instances of the axiomatic approach.

We explain the approach in more detail, where we focus on the univariate situation first.
We assume for the moment that $D: \Mc ^2 \times \Mc^2$ is an appropriate distance-type function for point tuples. Then,  a definition of $\TGVat$ for an univariate signal $u = (u_i)_i \in \Mc ^N$ can be given as
\[ \TGV_\alpha^2( (u_i)_i) = \inf_{(y_i)_i} \sum_i \alpha_1 D([u_i,u_{i+1}],[u_i,y_i]) + \alpha_0 D([u_i,y_i],[u_{i-1},y_{i-1}]). 
\]
Thus, one is left to determine a suitable choice of $D$. 
One possible choice is based on the  Schild's ladder \cite{kheyfets2000schild_mh} approximation of parallel transport, which is defined as follows (see Figure \ref{fig:schilds_visualization}): Assuming, for the moment, uniqueness of geodesics, define $c = [v,x]_{\frac{1}{2}}$ and $y' = [u,c]_2$. Then $[x,y']$ can be regarded as approximation of the parallel transport of $w = \log_u(v)$ to $x$, which is exact in the vector-space case. Motivated by this, the distance of the tuples $[u,v]$ and $[x,y]$ can be defined as $\dist(y,y').$
 Incorporating non-uniqueness by minimizing over all possible points in this construction to capture also points on the cut locus, yields a distance-type function for point tuples given as
\[ \ds ( [x,y],[u,v]) = \inf_{y' \in \Mc} \dist(y,y') \quad \text{s.t. } y' \in [u,c]_2 \text{ with } c \in [x,v]_{\frac{1}{2}}.
 \]
 
\begin{figure}[t]
\footnotesize
\center
\begin{tikzpicture}[scale=0.2]

\def\cw{8pt}

\filldraw (0,0) circle (\cw) node[left]{$u$};
\filldraw[blue] (20,0) circle (\cw) node[right,blue]{$v$};
\filldraw (0,10) circle (\cw) node[left]{$x$};
\filldraw[blue] (20,13) circle (\cw) node[right,blue]{$y$};

\draw[blue,dotted] (0,0) .. controls (4,-1) and (8,-2) .. (20,0);
\draw[blue,dotted] (0,10) .. controls (4,12) and (8,14) .. (20,13);
\draw (0,0) .. controls (-2,3) and (-2,5) .. (0,10);

\filldraw[darkgreen] (8.5,3.5) circle (\cw) node[darkgreen,above right]{$c$};
\filldraw[darkgreen] (19,8) circle (\cw) node[darkgreen,above right]{$y'$};
\draw[dotted,darkgreen] (0,10) .. controls (4,6) and (8,2) .. (20,0);
\draw[dotted, darkgreen] (0,0) .. controls (6,4) and (8,2) .. (19,8);

\draw[red,->] (0,0) -- (8,-2) node[below] {$w= \log_u(v)$};
\draw[red,->] (0,10) -- (7,8.5) node[above right=0cm] {$\log_x(y') \approx \pt_x(w)$};

\draw[darkgreen,dotted] (0,10) .. controls (5,9) and (7,8) .. (19,8);

\end{tikzpicture}
\caption{Approximate parallel transport of $\log_u(v)$ to $x$ via the Schild's ladder construction. Figure taken from \cite{holler18tgvm_mh}. \label{fig:schilds_visualization}}
\end{figure}
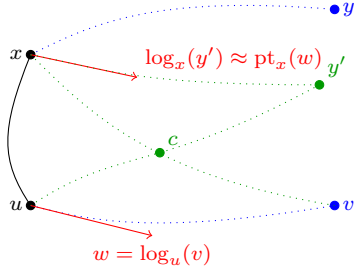

In the particular case that both tuples have the same base point, i.e., $x=u$, it is easy to see that, except in the case of non-unique length-minimizing geodesics, $\ds([x,y],[x,v]) = \dist(v,y)$ such that we can use this as simplification and arrive at a concrete form of manifold-TGV for univariate signals $(u_i)_i$ in $\Mc$ given as 
\[ \STGVat( (u_i)_i) = \inf_{(y_i)_i} \sum_i \alpha_1 \dist(u_{i+1},y_i) + \alpha_0 \ds ([u_i,y_i],[u_{i-1},y_{i-1}]). 
\]
We note that this operation only requires to carry out averaging and reflection followed by applying the distance in the manifold. Thus it is on the same ``level of difficulty'' as $\TV^2$ or $\IC$. For the bivariate situation, the situation if more challenging due to an additional averaging involved in the evaluation of $\symgrad w$. In fact, as described in \cite{holler18tgvm_mh}, there are different possibilities (of varying complexity) to generalize this to the manifold valued setting but there is a unique, rather simple one which in addition transfers fundamental properties of TGV, such as a precise knowledge on its kernel, to the manifold setting. This leads to the definition of $\ds^\sym: (\Mc^2)^4$ which realizes the symmetrized part of $\symgrad w$ in the definition of $\TGV$ and for which, for the sake of brevity, we refer to \cite[Equation 20]{holler18tgvm_mh}. Using $\ds^\sym$, a bivariate version of $\TGV$ for $u = (u_{i,j})_{i,j} \in \Mc^{N \times M}$ is given as
\begin{align} 
&\text{S-TGV}_\alpha^ 2(u)  
 = \min _{y^ 1_{i,j},y^ 2_{i,j}}  \alpha_1 \sum\nolimits _{i,j}  \Big(   d(u_{i+1,j},y^ 1_{i,j})^p + d(u_{i,j+1},y^ 2_{i,j})^p \Big)^{1/p} \notag\\
 & \quad +\alpha _0 \sum\nolimits_{i,j}  \Big( \ds\big([u_{i,j},y^1_{i,j}],[u_{i-1,j},y^1_{i-1,j}]\big)^p + \ds\big([u_{i,j},y^2_{i,j}],[u_{i,j-1},y^2_{i,j-1}]\big)^p  \notag\\
 & \quad + 2^{1-p}\ds^\sym ([u_{i,j},y^1_{i,j}],[u_{i,j},y^2_{i,j}],[u_{i,j-1},y^1_{i,j-1}],[u_{i-1,j},y^2_{i-1,j}])^p \Big)^{1/p}.
\label{eq:tgv_bivariate_schilds}
\end{align}

Naturally, the above definition of $\STGV$ based on the Schild's ladder construction is not the only possibility to extend second order TGV to the manifold setting. As already pointed out, in \cite{holler18tgvm_mh} this was accounted for by an axiomatic approach which, for a suitable generalization, also requires additional properties such as a good description of their kernel, and we will see below that indeed this is possible for $\STGV$. An alternative definition based on parallel transport presented in \cite{holler18tgvm_mh} uses, instead of $\ds$ for point-tuples with different base points, the distance
\begin{equation}\label{eq:DefDpt}
	\dpt ([x,y],[u,v]) = \big\| \log_x(y) - \pt_x(\log_u(v))\big\|_x,
\end{equation}
where $\text{pt}_x(z)$ is the parallel transport of $z \in T\Mc$ to $T_x \Mc$, and a similar adaption of $\ds^\sym$ for bivariate signals. It was shown in \cite{holler18tgvm_mh} that also this version suitably generalizes TGV by transferring some of its main properties to the manifold setting.

Another existing extension of TGV to the manifold setting is the one presented in \cite{bergmann2018ictv_tgv_mh} which is given, in the univariate setting, as
\[ 
\widetilde{\TGVat}(u) = \inf _{(\xi_i)_i} \sum\nolimits_i \alpha_1 \| \log_{u_i} (u_{i+1}) - \xi_i \|_{u_i} + \alpha_0\| \xi_i  - P_{u_i} (\xi_{i-1}) \|_{u_i} \]
where $P_{u_i}$ approximates the parallel transport of $\xi_{i-1} $ to $u_i$ by first mapping it down to a point tupel $[u_{i-1},\exp_{u_{i-1}} (\xi_i) ]$, then using the pole ladder \cite{lorenzi14pole_ladder_mh} as an alternative to Schild's ladder to approximate the parallel transport to $u_i$ and finally lifting the transported tuple again to the tangent space via the logarithmic map. In the univariate case, this also generalizes TGV and preserves its main properties such as a well defined kernel. For the bivariate version, \cite{bergmann2018ictv_tgv_mh} uses the standard Jacobian instead of the symmetrized derivative and it remains open to what extend the kernel of TGV is appropriately generalized, also because there is no direct, natural generalization of the kernel of $\TGVat$ (i.e., affine functions) in the bivariate setting (see the next paragraph for details).\\

\noindent \textbf{Consistency.} Given that there are multiple possibilities of extending vector-space concepts to manifolds, the question arises to what extend the extensions of $\TV^2$, $\IC$ and $\TGV$ 
presented above are natural or ``the correct ones.'' As observed in \cite{holler18tgvm_mh}, the requirement of suitably transferring the kernel of the vector-space version, which consists of the set of affine functions, is a property that at least allows to reduce the number of possible generalizations. Motivated by this, we consider the zero-set of the manifold extensions of $\TV^2$, $\IC_\alpha$ and $\TGVat$. We start with the univariate situation, where a generalization of the notion of ``affine'' is rather natural. 
\begin{definition}[Univariate generalization of affine signals] Let $u = (u_i)_i$ be a signal in $\Mc^N$. We say that $u$ is generalized affine or \emph{geodesic} if there exists a geodesic $\gamma :[0,L] \rightarrow \Mc$ such that all points of $u$ are on $\gamma$ at equal distance and $\gamma$ is length-minimizing between two subsequent points.
\end{definition}
 
The following proposition relates geodesic functions to the kernel of higher-order regularizers on manifolds. Here, in order to avoid ambiguities arising from subtle difference in the functionals depending on if length-minimizing geodesics are used or not, we assume geodesics are unique noting that the general situation is mostly analogue.
\begin{proposition}[Consistency, univariate] \label{prop:consistency_univariate} Let $u = (u_i)_i$ in $\Mc$ be such that all points $u_i,u_j$ are connected by a unique geodesic.
\begin{enumerate}[(i)]
\item If $u$ is geodesic, $\TV^2(u)  = \IC_\alpha(u) = \STGVat(u) = 0$.%

\item  Conversely, if $\TV^2(u) = 0$ or $\STGVat(u) = 0$
, then $u$ is geodesic. 

\end{enumerate}
\begin{proof}
If $u$ is geodesic, it follows that $u_i  \in [u_{i-1},u_{i+1}]_{\frac{1}{2}}$, such that $\TV^2(u) = 0$. In case of $\IC_\alpha$ %
define $v_i = u_1$ (the first point of $u$) for all $i$ and $w_i = [u_1,u_i]_{2d(u_1,u_i)}$. Then it follows that $u_i \in [v_i,w_i]_{\frac{1}{2}}$ for all $i$. Further, $\TV((v_i)_i) = 0$ and, since $(w_i)_i$ is geodesic, also $\TV^2((w_i)_i) = 0$ such that $\IC_\alpha(u) = 0$. Regarding $\STGVat$, we see that  $\STGVat(u) = 0$ follows from choosing $(y_i)_i = (u_{i+1})_i$ and noting that $\ds([u_i,u_{i+1}],[u_{i-1},u_i]) = 0$ since $u_i \in [u_i,u_i]_ {\frac{1}{2}}$ and $u_{i+1} \in [u_{i-1},u_i]_2$. 
Now conversely, if $\TV^2(u) = 0$, it follows that $u_i \in [u_{i-1},u_{i+1}]$ for all $i$ such that $(u_i)_i$ is geodesic. If $\STGVat(u) = 0$ we obtain $(y_i)_i = (u_{i+1})_i$ and, consequently, $u_{i+1} \in [u_i,u_{i-1}]$ which again implies that $u$ is geodesic. 
\end{proof}
\end{proposition}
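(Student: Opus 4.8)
The plan is to prove the two implications separately, reducing everything to the single geometric fact that, under the uniqueness hypothesis, a signal $u$ is geodesic precisely when $u_i = [u_{i-1},u_{i+1}]_{1/2}$ for all $i$. For the forward direction (i), suppose $u$ sits equally spaced on a geodesic $\gamma$ that is length-minimizing between consecutive points; by uniqueness, $\gamma$ realizes the distance between any two points $u_i,u_j$, so $u_i$ is the midpoint of $u_{i-1}$ and $u_{i+1}$ and each summand $\dc(u_{i-1},u_i,u_{i+1})$ of $\TV^2(u)$ vanishes, giving $\TV^2(u)=0$. For $\IC_\alpha$ I would take the constant component $v_i := u_1$, so that $\TV(v)=0$, together with $w_i$ the point obtained by running $\gamma$ from $u_1$ through $u_i$ out to distance $2\dist(u_1,u_i)$; then $(w_i)_i$ is again a geodesic signal, hence $\TV^2(w)=0$, while by construction $u_i$ is the midpoint of a (possibly non-minimizing) geodesic joining $v_i$ to $w_i$, so the constraint $u_i \in [[v_i,w_i]]_{1/2}$ holds and $\IC_\alpha(u)=0$. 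For the $\STGVat$ part of (i) I would substitute $y_i := u_{i+1}$: the first sum vanishes term by term since $\dist(u_{i+1},u_{i+1})=0$, and in the second sum I evaluate $\ds([u_i,u_{i+1}],[u_{i-1},u_i])$ directly from its definition — the auxiliary midpoint degenerates to $c = [u_i,u_i]_{1/2}=u_i$, so the reflected point is $y'=[u_{i-1},u_i]_2$, which equals $u_{i+1}$ exactly because $u$ is geodesic, whence $\ds(\cdots)=0$ and $\STGVat(u)=0$.

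For the converse direction (ii), if $\TV^2(u)=0$ then, the summands being nonnegative, $\dc(u_{i-1},u_i,u_{i+1})=0$ for every $i$, that is, $u_i=[u_{i-1},u_{i+1}]_{1/2}$. I would then argue by induction that all $u_i$ lie equally spaced on one geodesic: let $\gamma$ be the geodesic through $u_1$ and $u_2$ (unique by hypothesis, extended to all of $\mathbb R$ by completeness), and suppose $u_{i-1},u_i$ already lie on $\gamma$ at consecutive parameter values; the length-minimizing geodesic from $u_{i-1}$ to $u_{i+1}$ passes through $u_i$ at its midpoint, so by uniqueness of the geodesic from $u_{i-1}$ to $u_i$ its first half coincides with the corresponding arc of $\gamma$, which forces the whole geodesic — and in particular $u_{i+1}$ — onto $\gamma$ at the next parameter value; equal spacing and length-minimality between consecutive points then follow from Hopf--Rinow together with uniqueness, so $u$ is geodesic. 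If instead $\STGVat(u)=0$, the nonnegative first sum forces any (near-)optimal choice of $(y_i)_i$ to satisfy $y_i=u_{i+1}$, and then $\ds([u_i,u_{i+1}],[u_{i-1},u_i])=0$ gives $u_{i+1}=[u_{i-1},u_i]_2$; by uniqueness of geodesics this is equivalent to $u_i=[u_{i-1},u_{i+1}]_{1/2}$, putting us back in the $\TV^2$ case.

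The step I expect to be the main obstacle is the inductive gluing in (ii): one must verify that the pointwise midpoint identities assemble into a single global geodesic rather than a broken chain, and this is exactly where the hypothesis that the $u_i$ are pairwise joined by \emph{unique} geodesics is indispensable — it is what allows one to identify the first half of the geodesic $u_{i-1}\to u_{i+1}$ with an already-constructed arc and then invoke uniqueness of geodesic continuation. A secondary point to handle with care is that the infimum defining $\STGVat$ need not be attained, so the assertion ``$y_i=u_{i+1}$'' should, if necessary, be replaced by a limiting argument using lower semicontinuity of $\dist$ and $\ds$; similarly, in the $\IC_\alpha$ case one should note that $[[\cdot,\cdot]]$ admits non-distance-minimizing geodesics, which is precisely what makes the chosen component $w$ admissible.
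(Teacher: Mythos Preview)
Your proposal is correct and follows essentially the same route as the paper: the same midpoint characterization for $\TV^2$, the same choice $v_i=u_1$, $w_i=[u_1,u_i]_{2\dist(u_1,u_i)}$ for $\IC_\alpha$, and the same substitution $y_i=u_{i+1}$ with the Schild-ladder computation for $\STGVat$. Your treatment is in fact more careful than the paper's in two places --- the inductive gluing that turns the local midpoint conditions into a single global geodesic, and the limiting argument for the possibly unattained infimum in $\STGVat$ --- both of which the paper's proof simply asserts.
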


\begin{remark} One can observe that in Proposition \ref{prop:consistency_univariate}, the counterparts of ii) for $\IC_\alpha$ is missing. Indeed, an easy counterexample shows that this assertion is not true, even in case of unique geodesics: Consider $\Mc = \Sc^2 \cap ([0,\infty) \times \RR \times [0,\infty))$ and $\phi_1 = -\pi/4,\phi_2 = 0, \phi_3 = \pi/4$ and $\psi = \pi/4$ define 
\[ u_i = (\cos(\varphi_i)\sin(\psi),\sin(\varphi_i)\sin(\psi),\cos(\psi))\]
\[ w_i = (\cos(\varphi_i),\sin(\varphi_i),0)\]
and $v_i = (0,0,1)$ for all $i$. Then $u_i \in [v_i,w_i]_{\frac{1}{2}}$, $\TV(v) = 0$, $\TV^2(w) = 0$, hence $\IC_\alpha(u) = 0$ but $u$ is not geodesic.
\end{remark}

In the bivariate setting, a generalization of an affine function is less obvious: It seems natural that $u=(u_{i,j})_{i,j}$ being \emph{generalized affine} or, in the notion above, \emph{geodesic} should imply that for each $k$, both $(u_{i,k})_i$ and $(u_{k,j})_j$ are geodesics. However, to achieve a generalization of affine, an additional condition is necessary to avoid functions of the form $(x,y) \mapsto xy$. In \cite{holler18tgvm_mh} this condition was to require also the signal $(u_{i+k,j-k})_k$ to be geodesic for each $i,j$. While this has the disadvantage favoring one particular direction, it is less restrictive than to require, in addition, also $(u_{i+k,j+k})_k$ to be geodesic. As shown in the following proposition, bivariate functions that are geodesic in the sense of \cite{holler18tgvm_mh} coincide with the kernel of $\STGVat$. $\TV^2$ on the other hand, gives rise to a different notion of affine.

\begin{proposition}[Kernel, bivariate] \label{prop:kernel_bivariate}
Let $u = (u_{i,j})_{i,j}$ be such that all points of $u$ are connected by a unique geodesics.
\begin{enumerate}[(i)]
\item $\STGVat(u) = 0$ if and only if $(u_{k,j_0})_k$, $(u_{i_0,k})_k$ and $(u_{i_0+k,j_0-k})_k$ is geodesic for each $i_0,j_0$. 
\item $\TV^2(u) = 0$ if and only if $(u_{k,j_0})_k$, $(u_{i_0,k})_k$ is geodesic for each $i_0,j_0$ and $ [u_{i+1,j},u_{i,j-1}]_{\frac{1}{2}} \cap [u_{i,j},u_{i+1,j-1}]_{\frac{1}{2}} \neq \emptyset $ for all $i,j$.
\end{enumerate}
\begin{proof}
For $\STGVat$, a stronger version of this result is proven in \cite[Theorem 2.18]{holler18tgvm_mh}. 
For $\TV^2$, this follows analogously to the univariate case directly from the definition of $\dc$ and $\dcc$. 
\end{proof}
\end{proposition}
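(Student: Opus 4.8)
The plan is to exploit, in both parts, that each functional is a finite sum of non-negative terms, so that vanishing of the total forces vanishing of every summand, and then to translate each elementary vanishing condition directly through the definitions of $\dc$, $\dcc$, $\ds$ and $\ds^\sym$. The global uniqueness hypothesis on geodesics is what makes this clean: it renders every geodesic midpoint $[\cdot,\cdot]_{\frac12}$, every reflection $[\cdot,\cdot]_2$, and hence $\ds$ and $\ds^\sym$, single-valued and continuous. The ``row'' and ``column'' directions then reduce to the already-established univariate statement (Proposition~\ref{prop:consistency_univariate}), the mixed term of $\TV^2$ reduces to a midpoint-coincidence condition, and the only genuinely new ingredient is the symmetrised term appearing in the $\STGVat$ part.

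For (ii), recall that $\TV^2(u) = \sum_{i,j}\bigl(\dc(u_{i-1,j},u_{i,j},u_{i+1,j})^p + \dc(u_{i,j-1},u_{i,j},u_{i,j+1})^p + 2\,\dcc(u_{i,j},u_{i+1,j},u_{i,j-1},u_{i+1,j-1})^p\bigr)^{1/p}$. Since every summand is non-negative, $\TV^2(u)=0$ holds if and only if, for all $i,j$, the three quantities $\dc(u_{i-1,j},u_{i,j},u_{i+1,j})$, $\dc(u_{i,j-1},u_{i,j},u_{i,j+1})$ and $\dcc(u_{i,j},u_{i+1,j},u_{i,j-1},u_{i+1,j-1})$ all vanish. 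For fixed $j$, the first family (over $i$) is exactly the univariate expression $\TV^2((u_{k,j})_k) = \sum_k \dc(u_{k-1,j},u_{k,j},u_{k+1,j})$ being zero, which by Proposition~\ref{prop:consistency_univariate} is equivalent to $(u_{k,j})_k$ being geodesic; symmetrically for the second family and $(u_{i_0,k})_k$. For the third family, uniqueness of geodesics makes $[u_{i+1,j},u_{i,j-1}]_{\frac12}$ and $[u_{i,j},u_{i+1,j-1}]_{\frac12}$ singletons, so $\dcc$ equals $2\,\dist$ between these two midpoints, which is $0$ precisely when the two singleton sets coincide, i.e.\ when they intersect. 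This is exactly the stated intersection condition, which completes (ii).

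For (i), I would first eliminate the auxiliary fields $(y^1_{i,j})$, $(y^2_{i,j})$. In \eqref{eq:tgv_bivariate_schilds} the $\alpha_1$-sum is non-negative and $\alpha_1>0$, so $\STGVat(u)=0$ forces the $\alpha_1$-sum to vanish at a minimiser (or, if the infimum is not attained, along a minimising sequence, using continuity of $\ds$ and $\ds^\sym$); this pins down $y^1_{i,j}=u_{i+1,j}$ and $y^2_{i,j}=u_{i,j+1}$, and conversely this very choice already annihilates the $\alpha_1$-sum. Hence $\STGVat(u)=0$ if and only if, with this $y$, the terms $\ds([u_{i,j},u_{i+1,j}],[u_{i-1,j},u_{i,j}])$, $\ds([u_{i,j},u_{i,j+1}],[u_{i,j-1},u_{i,j}])$ and $\ds^\sym([u_{i,j},u_{i+1,j}],[u_{i,j},u_{i,j+1}],[u_{i,j-1},u_{i+1,j-1}],[u_{i-1,j},u_{i-1,j+1}])$ all vanish for every $i,j$. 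For the first: in $\ds([x,y],[u,v])$ we have here $x=v=u_{i,j}$, so the auxiliary point is $c=[u_{i,j},u_{i,j}]_{\frac12}=u_{i,j}$ and the comparison point is $[u_{i-1,j},u_{i,j}]_2$, whence the term equals $\dist(u_{i+1,j},[u_{i-1,j},u_{i,j}]_2)$; over all $i$ this vanishes exactly when each $u_{i,j}$ is the geodesic midpoint of $u_{i-1,j}$ and $u_{i+1,j}$, i.e.\ (as in the univariate argument, Proposition~\ref{prop:consistency_univariate}) when $(u_{k,j_0})_k$ is geodesic. Likewise, vanishing of the second term for all $j$ is equivalent to $(u_{i_0,k})_k$ being geodesic.

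The hard part will be the $\ds^\sym$-term. Evaluating it requires unwinding its definition \cite[Eq.~20]{holler18tgvm_mh}, which realises the symmetrised mixed derivative $\tfrac{\delta_y w^1+\delta_x w^2}{2}$ on the level of point tuples via the Schild's-ladder construction. Granting the row- and column-conditions just obtained, one then has to show that the vanishing of this term at every $(i,j)$ is equivalent to the remaining requirement that each anti-diagonal slice $(u_{i_0+k,j_0-k})_k$ is geodesic, mirroring the way the vector-space kernel computation for $\TGVat$ produces, on top of ``rows affine'' and ``columns affine'', precisely the extra constraint that rules out functions of the form $(x,y)\mapsto xy$. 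Since exactly this analysis of the Schild's-ladder map (in fact without the global-uniqueness simplification, and giving a stronger statement) is carried out in \cite[Theorem~2.18]{holler18tgvm_mh}, the cleanest route is to invoke that theorem directly for (i); a self-contained proof would essentially reproduce its case distinction. I expect this $\ds^\sym$ bookkeeping to be the only real obstacle -- everything else is the routine ``a finite sum of non-negatives vanishes iff each term does'' reduction together with the univariate result.
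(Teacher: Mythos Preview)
Your proposal is correct and follows essentially the same approach as the paper: for $\TV^2$ you reduce to the univariate case via the definitions of $\dc$ and $\dcc$ exactly as the paper indicates, and for $\STGVat$ you carry out the routine elimination of the auxiliary variables and the row/column reductions explicitly before invoking \cite[Theorem~2.18]{holler18tgvm_mh} for the $\ds^\sym$ term, which is precisely what the paper does (only more tersely). The paper's own proof is two lines and defers the substantive content to the cited reference; your write-up simply fills in the elementary steps the paper leaves implicit.
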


\noindent \textbf{Higher-order regularized denoising.} The next proposition shows that $\TV^2$ and $\STGV$ based denoising of manifold valued data is indeed well-posed.

\begin{proposition}\label{prop:tv2_tgv_well_posed} Both $\TV^2$ and $\STGVat$ are lower semi-continuous w.r.t. convergence in $(\Mc,\dist)$. Further, for $R \in \{\alpha\TV^2,\STGVat\}$, where $\alpha>0$ the the case of $\TV^2$, the problem
\[ \inf_{u=(u_{i,j})_{i,j}} R(u)  + \sum\nolimits_{i,j} d(u_{i,j},f_{i,j})^q \]
admits a solution.
\end{proposition}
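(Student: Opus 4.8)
The plan is to proceed by the direct method, which reduces the claim to two facts: that $\TV^2$, $\STGVat$ and hence the whole denoising objective are sequentially lower semi-continuous with respect to componentwise convergence in $(\Mc,\dist)$, and that minimizing sequences are precompact. The latter is immediate here: since $\Mc$ is complete and finite-dimensional, the Hopf--Rinow theorem guarantees that closed bounded subsets of $\Mc$ are compact and that $\Mc$ is geodesically complete, the latter ensuring in particular that all the constructions $[\cdot,\cdot]_{\tfrac12}$ and $[\cdot,\cdot]_2$ entering the functionals are non-empty, so that $\TV^2$ and $\STGVat$ are finite for every argument.

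First I would settle lower semi-continuity (and finiteness) of the elementary building blocks $\dc$, $\dcc$, $\ds$ and $\ds^\sym$. Each of them is an infimum of a continuous, distance-type expression over a constraint set cut out by geodesic midpoint conditions such as $c\in[u_-,u_+]_{\tfrac12}$ and geodesic doubling conditions such as $y'\in[u,c]_2$. These constraint sets are non-empty (finiteness), and once the data is restricted to a bounded region they are uniformly bounded, hence relatively compact. Their graphs are moreover closed: $c\in[u_-,u_+]_{\tfrac12}$ is equivalent to $\dist(u_-,c)=\dist(c,u_+)=\tfrac12\dist(u_-,u_+)$, and $y'\in[u,c]_2$ is equivalent to the existence of $w\in T_u\Mc$ with $|w|=\dist(u,c)$, $\exp_u w=c$ and $\exp_u(2w)=y'$; both are closed conditions by continuity of $\dist$ and of the exponential map. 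A routine subsequence extraction then shows that these set-valued constraints are upper semi-continuous in the data, which yields lower semi-continuity of $\dc$, $\dcc$, $\ds$ and $\ds^\sym$. Since the outer combination $(a,b,c)\mapsto(a^p+b^p+2c^p)^{1/p}$ (and its univariate analogues) is continuous and nondecreasing in each non-negative variable, and finite sums of lower semi-continuous functions are again lower semi-continuous, lower semi-continuity of $\TV^2$ follows at once; the data term $u\mapsto\sum_{i,j}\dist(u_{i,j},f_{i,j})^q$ is even continuous.

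The step I expect to be the main obstacle is lower semi-continuity of $\STGVat$, because it is defined as an infimum over the auxiliary point tuples $y^1_{i,j},y^2_{i,j}$ of jointly lower semi-continuous expressions, and an infimum of lower semi-continuous functions is in general not lower semi-continuous. The key observation that repairs this is that the summand $\alpha_1\big(\dist(u_{i+1,j},y^1_{i,j})^p+\dist(u_{i,j+1},y^2_{i,j})^p\big)^{1/p}$ occurs in the functional. Thus, given $u^n\to u$ componentwise with $L:=\liminf_n\STGVat(u^n)<\infty$ (if $L=\infty$ there is nothing to prove), after passing to a subsequence along which $\STGVat(u^n)\to L$ and choosing near-optimal tuples $y^{1,n},y^{2,n}$, the quantities $\dist(u^n_{i+1,j},y^{1,n}_{i,j})$ and $\dist(u^n_{i,j+1},y^{2,n}_{i,j})$ are bounded uniformly in $n$; since $u^n$ converges, the tuples $y^{k,n}_{i,j}$ therefore stay in a fixed bounded subset of $\Mc$. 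By Hopf--Rinow I may pass to a further subsequence along which $y^{k,n}_{i,j}\to y^k_{i,j}$ for all indices. Joint lower semi-continuity of the integrand — continuity of $\dist$ together with the lower semi-continuity of $\ds$ and $\ds^\sym$ proven above — then bounds the value of the expression evaluated at $(u,y^1,y^2)$ by $\liminf_n\STGVat(u^n)=L$, and since $(y^1,y^2)$ is admissible in the infimum defining $\STGVat(u)$ we obtain $\STGVat(u)\le L$. The univariate case is entirely analogous, and for $\TV^2$ this step is vacuous since there are no auxiliary variables.

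Finally, existence of a minimizer is the standard conclusion of the direct method. The objective $u\mapsto R(u)+\sum_{i,j}\dist(u_{i,j},f_{i,j})^q$ is non-negative, hence bounded below; for a minimizing sequence $u^n$ the data term is bounded, so each component $u^n_{i,j}$ lies in a closed $\dist$-ball around $f_{i,j}$ whose radius is controlled by the bound on the objective, a compact set by Hopf--Rinow. Extracting a subsequence converging in each of the finitely many components to some $u^\ast$ and applying the lower semi-continuity established above shows that $u^\ast$ attains the infimum.
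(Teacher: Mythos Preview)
Your proposal is correct and follows essentially the same route as the paper: the direct method, with coercivity supplied by the data term together with Hopf--Rinow, and lower semi-continuity of the regularizer reduced to that of the elementary building blocks $\dc$, $\dcc$, $\ds$, $\ds^\sym$ via a compactness-plus-closed-constraint argument. The paper handles $\dc$ in the same spirit (choosing near-minimizing midpoints $c_n$, extracting a convergent subsequence, and identifying the limit as a midpoint via uniform convergence of geodesics) and simply cites an external result for the lower semi-continuity of $\STGVat$, whereas you actually supply that argument by using the term $\alpha_1\,\dist(u_{i+1,j},y^1_{i,j})$ to bound the auxiliary variables; this is exactly the mechanism behind the cited result, so your version is a more self-contained rendering of the same proof.
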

\begin{proof}
The proof is quite standard: by the Hopf-Rinow theorem, it is clear that the claimed existence follows once lower semi-continuity of $R$ can be guaranteed. For $\STGVat$, this is the assertion of \cite[Theorem 3.1]{holler18tgvm_mh}. For $\TV^2$, it suffices to show lower semi-continuity of $\dc$ and $\dcc$. We provide a proof for $\dc$, the other case works analogously. Take $u^n = (u_-^n,u_\circ^n,u_+^n)_n$ converging to $(u_-,u_\circ,u_+)$ and take $(c_n)_n$ with $c_n \in  [u^n_-,u^n_+]$ such that $d(c_n,u_\circ^n) \leq \inf_{c \in [u^n_-,u^n_+]} d(c,u^n_\circ) + 1/n$. Then, from boundedness of $(u^n)_n$ and since $d(c^n,u_-^n) \leq d(u_+^n,u_-^n)$ we obtain boundedness of $(c^n)_n$, hence a (non-relabeled) subsequence converging to some $c \in \Mc$ exists. From uniform convergence of the geodesics $\gamma ^n:[0,1] \rightarrow \Mc$ connecting $u^n_-$ and $u^n_+$ such that $c^n =  \gamma^n(1/2)$ (see for instance \cite[Lemma 3.3]{holler18tgvm_mh}) we obtain that $c \in [u_-,u_+]_{\frac{1}{2}}$ such that
\( \dc(u_-,u_\circ,u_+) \leq d(c,u_\circ) \leq \liminf_n d (c_n,u_\circ^n) \leq \liminf _n   \dc(u^n_-,u_\circ^n,u_+^\circ).\)
\end{proof}

\begin{remark} We note that the arguments of Proposition \ref{prop:tv2_tgv_well_posed} do not apply to $\IC$ since we cannot expect $(v_{i,j})_{i,j}$ and $(w_{i,j})_{i,j}$ with $u_{i,j} \in [v_{i,j},w_{i,j}]_{\frac{1}{2}}$ to be bounded in general. Indeed, this is similar to vector-space infimal convolution, only that there, one can factor out the kernel of $\TV$ and still obtain existence. %
\end{remark}

\subsection{Algorithmic Realization}
\label{sec:higherOandTGValgo}
Here we discuss the algorithmic realization and illustrate the results of $\TV^2$ and $\STGV$ regularized denoising; for $\IC$ we refer to \cite{Steidl17_infcon_manifold_mh,bergmann2018ictv_tgv_mh}. We note that, in contrast to the $\TV$ functional, the $\TV^2$ and the $\STGVat$ functional are not convex on Hadamard manifolds (cf. \cite[Remark 4.6]{Bavcak2016tv2_manifold_mh}) such that we cannot expect to obtain numerical algorithms that provably converge to global optimal solutions as for $\TV$ in Hadamard spaces (cf. Theorem \ref{thm:ConvergenceAlgA}). Nevertheless, the cyclic proximal point algorithm and the parallel proximal point algorithm as described in Section \ref{sec:TValgo} are applicable in practice. In the following, we discuss the corresponding splittings and proximal mappings, where we focus on the univariate case since, similar to Section \ref{sec:TValgo}, the bivariate case for $p=1$ can be handled analogously; for details we refer to \cite{Bavcak2016tv2_manifold_mh,holler18tgvm_mh}. 

For $\TV^2$ denoising, we may use the splitting
\begin{align*}
\tfrac{1}{q} \sum\nolimits_{i} \dist(u_i,f_i)^q + \alpha\TV^2((u_i)_i) = F_0(u) + F_1(u) + F_2(u) + F_3(u)
\end{align*}
where
\( F_0(u) = \tfrac{1}{q}\sum\nolimits_{i} \dist(u_i,f_i)^q,\)
	and
\[
 \qquad F_j(u) = \sum\nolimits_i \dc(u_{3i - 1 + j},u_{3i + j},u_{3i+1+j}), \qquad j=1,\ldots,3.
\]
Due to the decoupling of the summands, the computation of the proximal maps of $(F_i)_{i=0}^3$ reduces to the computation of the proximal maps of $x \mapsto \dist(x,f)^q$ and of $(x_1,x_2,x_3) \mapsto \dc(x_1,x_2,x_3)$. The former is given explicitly as in \eqref{eq:tv_manifoldProxData}, while for the latter, following \cite{Bavcak2016tv2_manifold_mh}, we use a subgradient descent scheme (see for instance \cite{ferreira1998subgradient_mh}) to iteratively solve
\[ \min _{x_{k-1},x_k,x_{k+1}}  \frac{1}{2}\sum\nolimits_{l=k-1}^{k+1} \dist^2 (x_l,h_l) + \lambda \dc (x_{k-1},x_k,x_{k+1})
\]
for $(h_{k-1},h_k,h_{k+1}) \in \Mc^3$ the given point where the proximal map needs to be computed.

\begin{algorithm}[t]
  \begin{algorithmic}[1]

\State \textbf{SGD}($x_0,(\lambda_i)_i$)
\State $l=0$
\State \quad \textbf{repeat} until stopping criterion fulfilled
	\State \qquad $x_{k+1} \gets \exp_{x_k} \left( - \lambda_k \partial F(x_k) \right)  $
	\State \qquad $k \gets k+1$
\State \Return{$x$}
\end{algorithmic}
\caption{Subgradient descent for solving $\min_x F(x)$}\label{alg:subgradient_descend}
\end{algorithm}
For this purpose, we employ Algorithm \ref{alg:subgradient_descend}, which requires to compute the subgradient of $\dc$ as well as the derivative of $\dist$. The latter is, at a point $(x_{k-1},x_k,x_{k+1})$ given as \\
\(- \left( \begin{matrix}
\log_{x_{k-1}}(h_{k-1}), 
\log_{x_{k}}(h_{k}),  
\log_{x_{k+1}}(h_{k+1})  
\end{matrix} \right)^T.
\)
Regarding the computation of $\dc$, in order to avoid pathological (and practically irrelevant) constellations, we assume that the arguments $x_{k-1},x_{k+1}$ are not cut points, such that there is exactly one length minimizing geodesics connecting $x_{k-1}$ and $x_{k+1}$ and the corresponding midpoint is unique. 
With these assumptions, the derivative w.r.t. the first component and for a point $(x,y,z)$ with $y \neq [x,z]_{\frac{1}{2}}$ is given as
\[ \partial _{y} \dc(x,\cdot,z) (y) = {\log_{y}([x,z]_{\frac{1}{2}})}/{\|\log_{y}([x,z]_{\frac{1}{2})}\|_{y}}.\]
The derivative w.r.t. $x$ is given by
\[ \partial _{x} \dc(\cdot,y,z) (x) = \sum\nolimits_{l=1}^d \left \langle {\log_{c}(y) }/{\|\log_{c}(y) \|_{c}}, D_x c(\xi_l) \right \rangle \]
where we denote $c = [x,z]_{\frac{1}{2}}$. 
Here, $D_xc$ is the differential of the mapping $c: x \mapsto  [x,z]_{\frac{1}{2}}$ which is evaluated w.r.t.\ the elements of an orthonormal basis $(\xi_l)_{l=1}^d$ of the tangent space at $x.$
 The derivative w.r.t. $z$ is computed analoguosly due to symmetry and for the particular case that $y = [x,z]_{\frac{1}{2}}$ we refer to \cite[Remark 3.4]{Bavcak2016tv2_manifold_mh}. While the formulas above provide general forms of the required derivatives, a concrete realization can be done using explicit formulae for Jacobi fields in the particular manifold under consideration; we refer to \cite{Bavcak2016tv2_manifold_mh} for explicit versions. For the biviariate case, we also refer to \cite{Bavcak2016tv2_manifold_mh} for the derivative of $\dcc$, which can be computed with similar techniques as $\dc$.

\begin{figure}[t]
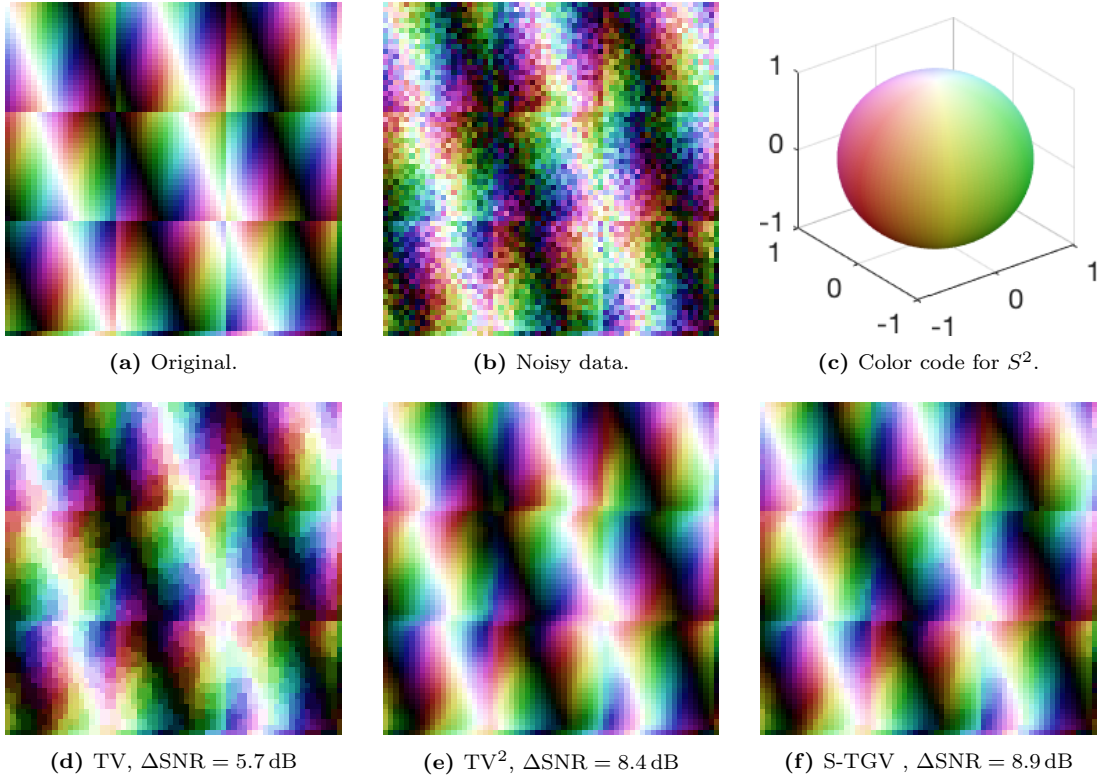

	\centering
	\def\figfolder{experiments/compare_with_TV_2D_S2_5/}
	\def\figurewidth{0.3\textwidth}
	\def\hs{\hspace{0.03\textwidth}}
	\begin{subfigure}[t]{\figurewidth}
		\includegraphics[width=1\textwidth]{\figfolder exp_synth_S2_original}
		\caption{Original.}
	\end{subfigure}
	\hs 
	\begin{subfigure}[t]{\figurewidth}
		\includegraphics[width=1\textwidth]{\figfolder exp_synth_S2_noisy}
		\caption{Noisy data.}
	\end{subfigure}
	\hs 
	\begin{subfigure}[t]{\figurewidth}
		\includegraphics[width=1\textwidth]{\figfolder exp_synth_S2_colormap}
		\caption{Color code for $S^2$.}
	\end{subfigure}
	\\[2ex]
	\begin{subfigure}[t]{\figurewidth}
		\includegraphics[width=1\textwidth]{\figfolder exp_synth_S2_TV}
		\caption{TV, $\deltaSNR = \protect\input{\figfolder deltaSNR_TV.txt} \dB$ }
	\end{subfigure} 
	\hs
	\begin{subfigure}[t]{\figurewidth}
		\includegraphics[width=1\textwidth]{\figfolder exp_synth_S2_TV2}
		\caption{$\TV^2$, $\deltaSNR = \protect\input{\figfolder deltaSNR_TV2.txt} \dB$ }
	\end{subfigure} 
	\hs
	\begin{subfigure}[t]{\figurewidth}
		\includegraphics[width=1\textwidth]{\figfolder exp_synth_S2_TGVM}
		\caption{S-TGV , $\deltaSNR = \protect\input{\figfolder deltaSNR_TGV.txt} \dB$}
	\end{subfigure}
	\caption{Comparison of first and second-order total variation as well as $\STGV$
		on an $S^2$-valued image from~\cite{Bavcak2016tv2_manifold_mh}. Images taken from \cite{holler18tgvm_mh}.
	}
	\label{fig:S2_synth}
\end{figure}

For TGV, we again start with the univariate case and consider the $\STGVat$ variant. We consider the splitting
\begin{align*}
\tfrac{1}{q} \sum\nolimits_{i} \dist(u_i,f_i)^q + \TGV_\alpha^2((u_i)_i) = F_0(u) + F_1(u) + F_2(u) + F_3(u)
\end{align*}
where $F_0(u) = \frac{1}{q}\sum_{i} \dist(u_i,f_i)^q,$ $F_1(u) = \sum_{i} \dist(u_{i+1},y_i),$ and 
\begin{align*}
 F_2(u) & = \sum_{ \substack{ i:i \text{ even}}} \ds([u_{i},y_{i}],[u_{i-1,}y_{i-1}]),\qquad   F_3(u) = \sum_{ \substack{ i:i \text{ odd}}} \ds([u_{i},y_{i}],[u_{i-1}y_{i-1}]). 
 \end{align*}
Here, as an advantage of this particular version of TGV that uses only points on the manifold, we see that the proximal mappings of $F_1$ are explicit as in \eqref{eq:tv_manifoldProxReg} and, since again the proximal mapping of $F_0$ is given for $q \in \{1,2\}$ explicitly by \eqref{eq:tv_manifoldProxData}, we are left to compute the proximal mappings for $\ds$. To this aim, we again apply the subgradient descent method as in Algorithm \ref{alg:subgradient_descend}, where the required derivatives of $\ds$ are provided in \cite{holler18tgvm_mh}. In particular, again assuming uniqueness of geodesics to avoid pathological situations, we have that for points $[u_i,y_i],[u_{i-1},y_{i-1}]$ with $y_{i}\neq [u_{i-1},[u_{i},y_{i-1}]_{\tfrac{1}{2}} ]_2$ that
\begin{equation}
\nabla_{y_{i}} \ds = 
- \log_{y_{i}}S(u_{i-1},y_{i-1},u_{i})  /
{\big \|\log_{y_{i}}S(u_{i-1},y_{i-1},u_{i})   \big \|}
\end{equation}
where 
\begin{equation}
 S(u_{i-1},y_{i-1},u_{i}) = [u_{i-1},[u_{i},y_{i-1}]_{\tfrac{1}{2}} ]_2
\end{equation}
denotes the result of applying the Schild's construction to the respective arguments.
Derivatives w.r.t. the arguments $u_{i-1}$ and $y_{i-1}$ are given in abstract form as
\begin{align}\label{eq:InitDiscussion1}
\nabla_{u_{i-1}} \ds &=  
-T_1
\left(
{\log_{S(u_{i-1},y_{i-1},u_{i})} y_{i}}    
/
{\big \| \log_{S(u_{i-1},y_{i-1},u_{i})} y_{i} \big \|}
\right), \\
\nabla_{y_{i-1}} \ds &=  
-T_2
\left(
{\log_{S(u_{i-1},y_{i-1},u_{i})} y_{i}}    
/
{\big \| \log_{S(u_{i-1},y_{i-1},u_{i})} y_{i} \big \|}
\right),\label{eq:InitDiscussion2}
\end{align}
where $T_1$ is the adjoint of the differential of the mapping 
$u_{i-1} \mapsto 
[u_{i-1},[u_{i},y_{i-1}]_{{1}/{2}} ]_2$,
and 
$T_2$ is the adjoint of the differential of the mapping 
$y_{i-1} \mapsto 
[u_{i-1},[u_{i},y_{i-1}]_{{1}/{2}} ]_2$.
The differential w.r.t. $u_{i}$ is obtained by symmetry. Again, the concrete form of these mappings depends on the underlying manifold and we refer to \cite{holler18tgvm_mh} for details. Regarding points with $y_{i}\neq [u_{i-1},[u_{i},y_{i-1}]_{{1}/{2}} ]_2$ we note that for instance the four-tuple consisting of the four zero-tangent vectors sitting in  $u_i,u_{i-1},[u_{i-1},[u_i,y_{i-1}]_{{1}/{2}}]_2,$ $y_{i-1}$ belongs to the subgradient of $\ds.$
The algorithm for bivariate TGV-denoising can be obtained analogously, where we refer to \cite{holler18tgvm_mh} for the computation of the derivative of $\ds^\text{sym}$. 
The algorithm for TGV-denoising based on the parallel variant \eqref{eq:DefDpt} employs the proximal mappings of $F_0$ and $F_1$ as well. Implementation of the proximal mappings of $F_2$ and $F_3$ based on subgradient descent can be found in \cite{holler18tgvm_mh} and \cite {bredies2018Observation}.

\noindent{\bf Numerical examples.}
We illustrate the algorithm with numerical examples. At first, we provide a comparison between TV, $\TV^2$ and $\STGV$ regularization for synthetic $\mathbb S^2$-valued image data, taken from \cite{holler18tgvm_mh}. The results can be found in Figure \ref{fig:S2_synth}, where for each approach we optimized over the involved parameters to achieve the best $\deltaSNR$ result, with $\deltaSNR$ being defined for ground truth, noisy and denoised signals $h$, $f$ and $u$, respectively, as
\(
	\deltaSNR = 10 \log_{10} \left(  \tfrac{\sum_{i} \dist(h_{i}, f_{i})^2   }{\sum_{i} \dist(h_{i}, u_{i})^2}\right) \dB.
\)
We observe that the TV regularization produces significant piecewise constancy artifacts (staircasing) on the piecewise smooth example. The result of $\TV^2$ shows no visible staircasing, but smoothes the discontinuities to some extend. $\STGV$ is able to better reconstruct sharp interfaces while showing no visible staircasing. 

As second example, Figure \ref{fig:InSAR} considers the denoising of interferometric synthetic aperture radar (InSAR) images. Again, it can be oberserved that $\STGV$ has a significant denoising effect while still preserving sharp interfaces. 

\begin{figure}[t]
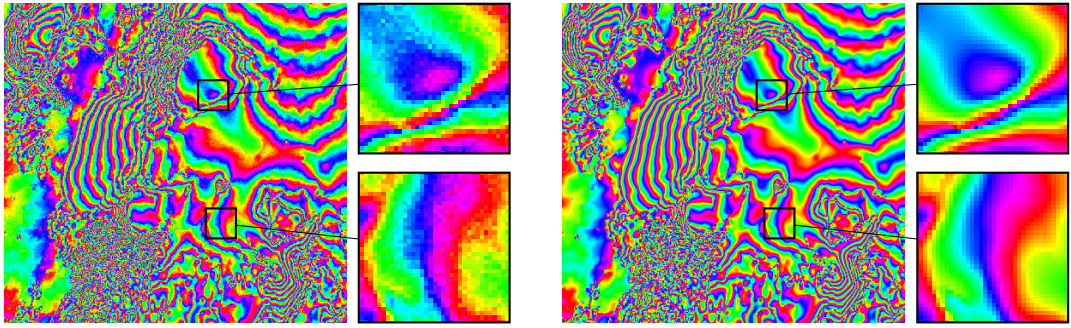

\centering
\def\figfolder{experiments/InSAR_3/}

\centering
\def\figurewidth{0.68\textwidth}

\tikzstyle{myspy}=[spy using outlines={black,lens={scale=5},width=0.3\textwidth, height=0.3\textwidth, connect spies, every spy on node/.append style={thick}}]
\centering
\def\subfigwidth{0.45\textwidth}
\def\nodeSpy{(0.5, 0.9)}
\def\nodeSpyB{(0.6, -0.8)}
\def\nodeWindow{(4.4,1.12)}
\def\nodeWindowB{(4.4,-1.12)}

\begin{subfigure}[t]{\subfigwidth}
	\begin{tikzpicture}[myspy]
	\node {\includegraphics[interpolate=false,width=\figurewidth]{\figfolder exp_InSAR_original}};
	\spy on \nodeSpy in node [left] at \nodeWindow;
	\spy on \nodeSpyB in node [left] at \nodeWindowB;
\end{tikzpicture}
\end{subfigure}
\hspace*{.5cm}
\begin{subfigure}[t]{\subfigwidth}
	\begin{tikzpicture}[myspy]
	\node {\includegraphics[interpolate=false,width=\figurewidth]{\figfolder exp_InSAR_TGVM}};
	\spy  on \nodeSpy in node [left] at \nodeWindow;
	\spy on \nodeSpyB in node [left] at \nodeWindowB;
	\end{tikzpicture}
\end{subfigure}
	\caption{
	\emph{Left:} InSAR image from \cite{thiel1997ers}. 
	\emph{Right:} Result of S-TGV. Image taken from \cite{holler18tgvm_mh}.
	}
\label{fig:InSAR}
\end{figure}

\section{Mumford-Shah Regularization for Manifold Valued Data}
\label{sec:MumSha}

The Mumford and Shah model \cite{mumford1985boundary,mumford1989optimal}, also called Blake-Zisserman model \cite{blake1987visual}, is a powerful variational model 
for image regularization. The regularization term measures the length of the jump set and, within segments, it measures quadratic variation of the function.  
The resulting regularization is a smooth approximation to the image/signal 
which, at the same time, allows for sharp edges at the discontinuity set.
Compared with the TV regularizer, it does not penalize the jump height and, due to the quadratic variation on the complement of the edge set, no staircasing effects appear.
(Please note that no higher order derivatives/differences are involved here.)
The piecewise constant variant of the Mumford-Shah model (often called Potts model)
considers piecewise constant functions (which then have no variation on the segments) and 
penalizes the length of the jump sets. Typical applications of these functionals
are smoothing and the use within a segmentation pipeline.
For further information considering these problems for scalar data from various perspectives (calculus of variation, stochastics, inverse problems) we refer to  
\cite{potts1952some, blake1987visual, geman1984stochastic, ambrosio1990approximation,chambolle1995image, wittich2008complexity,boysen2009consistencies,fornasier2010iterative,fornasier2013existence,jiang2014regularizing} 
and the references therein.  
These references also deal with fundamental questions such as the existence of minimizers.
Mumford-Shah and Potts problems are computationally challenging
since the functionals are non-smooth and non-convex. 
Even for scalar data, both problems are known to be NP-hard in dimensions higher than one \cite{veksler1999efficient, boykov2001fast, alexeev2010complexity}.
This makes finding a (global) minimizer infeasible.
Because of its importance in image processing however,
many approximative strategies have been proposed  
for scalar- and vector valued data.
Among these strategies are graduated non-convexity \cite{blake1987visual}, approximation by elliptic functionals \cite{ambrosio1990approximation}, 
graph cuts \cite{boykov2001fast}, active contours \cite{tsai2001curve}, 
convex relaxations \cite{pock2009algorithm}, iterative thresholding approaches \cite{fornasier2010iterative}, 
and alternating direction methods of multipliers \cite{hohm2015algorithmic}.

In the context of DTI, the authors of \cite{wang2005dti, wang2004affine} consider 
a Chan-Vese model for positive matrix-valued data, i.e., for manifold-valued data in $\mathrm{Pos}_3,$ 
as well as a piecewise smooth variant.
(We recall that Chan-Vese models are variants of Potts models for the case of two labels.)
Their method is based on a level-set active-contour approach.
In order to reduce the computational load in their algorithms
(which is due to the computation of Riemannian means for a very large number of points)
the authors resort to non-Riemannian distance measures in \cite{wang2005dti, wang2004affine}. 
Recently, a fast recursive strategy for computing the Riemannian mean has been proposed   
and applied to the piecewise constant Chan-Vese model (with two labels) in \cite{cheng2012efficient}.

We mention that for $\mathbb S^1$-valued data,  a noniterative exact solver for the univariate Potts problem has been proposed in \cite{storath2017jump}.

In this section,
we consider Mumford-Shah and Potts problems for (general) manifold-valued data
and derive algorithms for these problems. 
As in the linear case, typical applications of these functionals
are smoothing and also segmentation; more precisely, they can serve as an initial step of a segmentation pipeline. In simple cases, the induced edge set may yield a reasonable segmentation directly.

\subsection{Models}
\label{sec:MumShamodel}

We start out with Mumford-Shah and Potts problems for univariate manifold-valued data $(f_i)_{i=1}^N \in \Mc^N$, with $\Mc$ again being a complete, finite dimensional Riemannian manifold. 
The discrete Mumford-Shah functional reads 
\begin{equation} \label{eq:1dMS_mani_jumpFormulation}
\begin{split}
B_{\alpha,\gamma}(x) = \frac1q \sum_{i = 1}^N \dist(x_i,f_i)^q &+ \frac{\alpha}{p}  \sum_{i \notin \mathcal{J}(x)} \dist(x_i,x_{i+1})^p  + \gamma \left|\mathcal{J}(x)\right|,
\end{split}
\end{equation}
where $\dist$ is the distance with respect to the Riemannian metric in the manifold 
$\mathcal M$, $\mathcal{J}$ is the jump set of $x$ and $p,q \in [1,\infty)$. 
The jump set is given by $\mathcal{J}(x) = \{i: 1 \leq i < n $ and $ \dist(x_i,x_{i+1}) > s\},$
and $|\mathcal{J}(x)|$ denotes the number of jumps.
The jump height $s$ and the parameter $\gamma$ are related via $\gamma = \alpha s^p/p$.
We rewrite \eqref{eq:1dMS_mani_jumpFormulation}
using a truncated power function 
to obtain the Blake-Zisserman type form 
\begin{equation} \label{eq:1dMS_manifold_truncatedFormulation}
B_{\alpha,s}(x) = \frac1q \sum_{i = 1}^N \dist(x_i,f_i)^q + \frac{\alpha}{p}  \sum_{i=1}^{N-1} 
\min(s^p,\dist(x_i,x_{i+1})^p),
\end{equation}
where $s$ is the argument of the power function $t \mapsto t^p$, which is truncated at $\dist(x_i,x_{i+1})^p$.
The discrete univariate Potts functional for manifold-valued data reads
\begin{equation} \label{eq:1dPotts_mani}
P_{\gamma}(x) = \frac1q \sum_{i = 1}^n \dist(x_i,f_i)^q + \gamma |\mathcal{J}(x)|,
\end{equation}
where an index $i$ belongs to the jump set of $x$
if $x_i \neq x_{i+1}.$ 

In the multivariate situation, 
the discretization of the Mum\-ford-Shah and Potts problem
is not as straightforward as in the univariate case.
A simple finite difference discretization
with respect to the coordinate  directions
is known to produce undesired block 
artifacts in the reconstruction 
\cite{chambolle1999finite}.
The results improve significantly 
when including further finite differences 
such as the diagonal directions \cite{chambolle1999finite,storath2014fast, storath2014joint}.
To define bivariate Mumford-Shah and Potts functionals, we introduce the no\-ta\-tion~$\dist^q(x,y)$ $= \sum_{i,j}\dist^q(x_{ij}, y_{ij})$ for the $q$-distance of two manifold-valued images $x,y \in \mathcal M^{N\times M}.$
For the regularizing term, we employ the penalty function
\[
\Psi_{a}(x) = \sum\nolimits_{i,j} \psi(x_{(i,j) + a}, x_{ij}),
\]
where  $a \in \mathbb Z^2 \setminus \{0\}$ denotes a direction,
and the potentials $\psi$ are given by
\begin{align}\label{eq:psiMumfiPotts}
\psi(w,z) =  \frac{1}{p}\min(s^p, \dist(w, z)^p),\quad \text{ and} \quad
\psi(w,z) = 
\begin{cases}
1, &\text{if }w \neq z,\\
0, &\text{if }w = z,\\
\end{cases}
\end{align}
for $w, z \in \mathcal M$,
in the  Mumford-Shah case and in the Potts case, respectively.
We define the discrete multivariate Mumford-Shah and Potts problems by
\begin{equation}\label{eq:msDiscrete}
\min_{x \in \mathcal M^{N\times M}}  \frac{1}{q} \dist^q(x, f) + \alpha\sum_{s=1}^R \omega_s \Psi_{a_s}(x),
\end{equation}
where the finite difference vectors $a_s \in \mathbb Z^2\setminus\{0\}$ belong to a neighborhood system $\Nc$ and $\omega_1,..., \omega_R$ are
non-negative weights.
As observed in \cite{storath2014fast}, a reasonable neighborhood system is
\[
\Nc = \{  (1,0); (0,1); (1,1); (1,-1) \}
\]
with the weights $\omega_1 = \omega_2 = \sqrt{2} -1$ and $\omega_3 = \omega_4 = 1- \frac{\sqrt{2}}{2}$
as in \cite{storath2014fast}. It provides a sufficiently isotropic discretization while keeping the computational load at a reasonable level.
For further neighborhood systems and weights we refer to \cite{chambolle1999finite,storath2014fast}.

For both, the univariate and multivariate discrete Mumford-Shah and Potts functionals, the following result regarding the existence of minimizers holds.
\begin{theorem}\label{thm:ExMinimizers2D}
	Let $\mathcal M$ be a complete Riemannian manifold.
	Then the univariate and multivariate discrete Mumford-Shah and Potts problems 
	\eqref{eq:1dMS_mani_jumpFormulation}, \eqref{eq:1dPotts_mani}, 
	and \eqref{eq:msDiscrete}
	have a minimizer.
\end{theorem}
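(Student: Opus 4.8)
The plan is to run the direct method of the calculus of variations, the main observation being that here coercivity is essentially for free: the regularizing term in each of the four functionals is bounded. Indeed, in the truncated form \eqref{eq:1dMS_manifold_truncatedFormulation} each summand $\tfrac{\alpha}{p}\min(s^p,\dist(x_i,x_{i+1})^p)$ is at most $\tfrac{\alpha}{p}s^p$, and similarly for the bivariate version in \eqref{eq:msDiscrete}; in the Potts functionals \eqref{eq:1dPotts_mani} and \eqref{eq:msDiscrete} the jump count $\gamma|\mathcal{J}(x)|$ is bounded by $\gamma$ times the number of neighbour pairs. So I would first note $\inf F \le F(f) < \infty$ and then, for a minimizing sequence $(x^n)_n$, conclude that the data term is bounded by some constant $C$ independent of $n$ (since the regularizer is nonnegative); hence every component $x^n_i$ stays in the fixed closed ball $\bar{B}\big(f_i,(qC)^{1/q}\big)$, and likewise $x^n_{i,j} \in \bar{B}\big(f_{i,j},(qC)^{1/q}\big)$ in the bivariate case.

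Next I would invoke the Hopf-Rinow theorem: since $\Mc$ is a complete, finite-dimensional Riemannian manifold, its closed bounded subsets are compact, so each such ball is compact, and so is the finite product of balls that contains the whole minimizing sequence. Passing to a subsequence (not relabelled), $x^n \to x^\ast$ componentwise with respect to $\dist$.

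The last step is lower semi-continuity of the functionals along this convergence, which then gives $F(x^\ast)\le\liminf_n F(x^n)=\inf F$, so that $x^\ast$ is a minimizer. The data term is continuous since $\dist$ is. For the Mumford-Shah functionals I would first record that \eqref{eq:1dMS_mani_jumpFormulation} coincides with the truncated form \eqref{eq:1dMS_manifold_truncatedFormulation}---split the neighbour sum according to whether $\dist(x_i,x_{i+1})\le s$ or $>s$ and use $\gamma=\alpha s^p/p$---and then observe that the truncated regularizer is a finite sum of the continuous maps $(w,z)\mapsto\min(s^p,\dist(w,z)^p)$, hence continuous. For the Potts functionals each potential $\psi$ is lower semi-continuous in $(w,z)$: if $w\neq z$ then $w_n\neq z_n$ eventually, so $\psi(w_n,z_n)=1=\psi(w,z)$, and if $w=z$ then $\psi(w,z)=0\le\liminf_n\psi(w_n,z_n)$; a finite sum of nonnegative lower semi-continuous functions is lower semi-continuous.

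I expect no serious obstacle: the only point that needs a little care is the lower semi-continuity of the jump-counting penalty together with the equivalence of the two Mumford-Shah formulations, both of which hinge on a discontinuous dependence on whether neighbouring values coincide or cross the threshold $s$. The essential work is done by Hopf-Rinow, which replaces the weak-compactness step of the usual Banach-space direct method by honest compactness of closed bounded sets in $\Mc$.
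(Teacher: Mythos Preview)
Your argument is correct and is exactly the direct method one would expect here: bounded regularizer $\Rightarrow$ coercivity via the data term, Hopf--Rinow for compactness of a minimizing sequence, and lower semi-continuity of each piece (continuity for the truncated Mumford--Shah penalty, genuine l.s.c.\ for the $0$/$1$ Potts potential). The paper itself does not give a proof of this theorem; it simply refers to \cite{weinmann2015mumford}, where the same direct-method reasoning is carried out, so your approach matches what is intended.
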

A proof may be found in \cite{weinmann2015mumford}.
We note that most of the data spaces in applications 
are complete Riemannian manifolds.

\subsection{Algorithmic Realization}
\label{sec:MumShaalgo}

We start with the univariate Mumford-Shah
and Potts problems   \eqref{eq:1dMS_mani_jumpFormulation} and \eqref{eq:1dPotts_mani}.
These are not only important on their own, variants also appear as subproblems in the algorithms for the multivariate problems discussed below.

{\em Dynamic programming scheme.}
To find a minimizer of the Mumford-Shah problem  \eqref{eq:1dMS_mani_jumpFormulation}
and the Potts problem  \eqref{eq:1dPotts_mani}, we employ a general dynamic programming scheme which was employed for related scalar and vectorial problems in various contexts \cite{mumford1989optimal,chambolle1995image,winkler2002smoothers,friedrich2008complexity,weinmann2014l1potts,
	storath2014jump}. 
We briefly explain the idea where we use the Mumford-Shah problem as example.
We assume that we have already computed minimizers $x^l$ of 
the functional $B_{\alpha,\gamma}$ associated with the partial data 
$f_{1:l} = (f_1,..., f_l)$ for each $l = 1,..., r-1$ and some $r \leq N.$
(Here, we use the notation
$
f_{l:r} := (f_l,..., f_r).)
$
We explain how to compute a minimizer $x^r$ associated to data $f_{1:r}.$
For each $x^{l-1}$ of length $l-1,$ 
we define a candidate $x^{l,r}=(x^{l-1},h^{l, r}) \in \mathcal M^r$
which is the concatenation of $x^{l-1}$	
with a vector $h^{l, r}$ of length $r - l +1;$ 
We choose $h^{l, r}$ as a minimizer of the problem
\begin{equation}\label{eq:epsilonGeneral}
\epsilon_{l,r} = \min_{h \in M^{r- l + 1}} \sum\nolimits_{i=l}^{r-1} \frac\alpha p \dist^p(h_i, h_{i+1}) + \frac1q \sum\nolimits_{i=l}^{r} \dist^q(h_i, f_i),
\end{equation}
where $\epsilon_{l,r}$ is the best approximation error on the (discrete) interval $(l,...,r).$
Then we calculate  
\begin{align}\label{eq:potts_value_candidate}
\min_{l=1,...,r} \left\{ B_{\alpha,\gamma}(x^{l-1}) +  \gamma + \epsilon_{l,r}   \right\},
\end{align}
which coincides with  
the minimal functional value  of $B_{\alpha,\gamma}$ for data $f_{1:r}.$
We obtain the corresponding minimizer $x^{r} = x^{l^*, r},$ 
where $l^*$ is a minimizing argument in \eqref{eq:potts_value_candidate}.
We successively compute $x^r$ for each $r = 1,..., N$ until we end up with full data $f.$
For the selection process, only the $l^*$ and the $\epsilon_{l,r}$ have to be computed;
the optimal vectors $x^{r}$ are then computed in a postprocessing step 
from these data; see, e.g., \cite{friedrich2008complexity} for further details.
This skeleton (without computing the $\epsilon_{l,r}$)
has quadratic complexity with respect to time and linear complexity with respect to space.	
In the concrete situation, it is thus important to find fast ways for computing the approximation errors $\epsilon_{l,r}.$
We will discuss this in the next paragraph for our particular situation. 
In practice, the computation is accelerated significantly using pruning strategies \cite{killick2012optimal,storath2014fast}.

\noindent \textbf{Algorithms for the univariate Mumford-Shah and Potts problem.}
To make the dynamic program work for 
the Mumford-Shah problem with manifold-valued data, we have to compute the approximation errors $\epsilon_{l,r}$
in \eqref{eq:epsilonGeneral}. These are $L^q$-$V^p$ type problems: the data term is a manifold $\ell^q$ distance and the second term is a $p$th variation; in particular, for $p=1$ we obtain TV minimization problems. These $L^q$-$V^p$ problems can be solves using the proximal point schemes discussed in Section~\ref{sec:TVmodel}; for  details, we refer to \cite{weinmann2014total} where in particular the corresponding proximal mappings are calculated in terms of gedesic averages for the important case of quadratic variation $p=2$.

\begin{figure}[t]
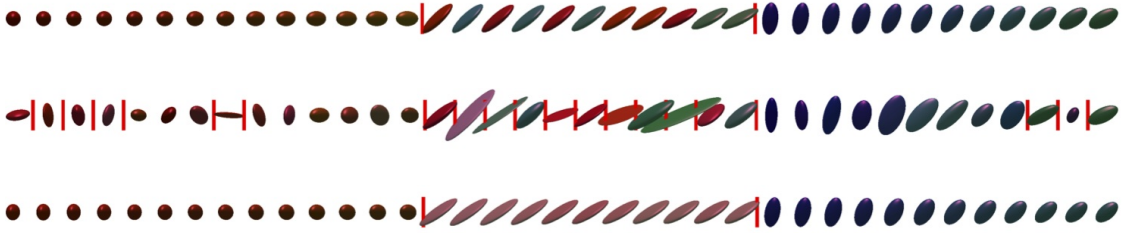

		\def\figfolder{experiments/Mumfi1D/}			
		
		\includegraphics[height=1.03\textwidth, angle=-90]{\figfolder ex2_1D_original.jpg}\\
		
		\includegraphics[height=1.03\textwidth, angle=-90]{\figfolder ex2_1D_noisy.jpg}\\

		\includegraphics[height=1.03\textwidth, angle=-90]{\figfolder ex2_1D_bz_partition.jpg}\\

	\caption{Univariate Mumford-Shah regularization ($p,q = 1$) using dynamic programming. (The red steaks indicate jumps.)
		{\em Top.} Original signal.
		{\em Middle.} Data with Rician noise added. %
		{\em Bottom.} Regularized signal.
		Mumford-Shah regularization 
		removes the noise while preserving the jump.}		
	\label{fig:ms_1D}  
\end{figure}
To make the dynamic program work for 
the Potts problem with manifold-valued data, we have to compute the approximation errors $\epsilon_{l,r}$ for the problem 
$
\epsilon_{l,r} = \min_{h \in \mathcal M^{r- l + 1}}  \frac1q \sum\nolimits_{i=l}^{r} \dist^q(h_i, f_i),
$
under the assumption that $h$ is a constant vector. Hence we have to compute 
\begin{equation} \label{eq:epsilonConc}
\epsilon_{l,r} = \min_{h \in \mathcal M} \frac1q\sum\nolimits_{i=l}^{r} \dist^q(h, f_i).
\end{equation}
We observe that, by definition, a minimizer of \eqref{eq:epsilonConc} is given by an intrinsic mean for $q=2,$ and by an intrinsic median for $q=1,$ respectively.

As already discussed, a mean is general not uniquely defined since the minimization problem has no unique solution in general. Further, there is no closed form expression in general. 
One means to compute the intrinsic mean is the gradient descent 
(already mentioned in \cite{karcher1977riemannian})
via the iteration
\begin{equation} \label{eq:GradientDescentIntMean}
h^{k+1} =  \exp_{h^{k}} \sum_{i=l}^r \tfrac{1}{r-l+1} \log_{h^{k}}f_i,
\end{equation}
where again $\log$ denotes the inverse exponential map.
Further information on convergence and other related topics can for instance be found in the papers \cite{fletcher2007riemannian,afsari2013convergence} and the references given there.
Newton's method was also applied to this problem in the literature; see, e.g., \cite{ferreira2013newton}. 
It is reported in the literature and also confirmed by the authors' experience that the gradient descent converges rather fast; 
in most cases, $5$-$10$ iterations are enough.
For general $p\neq 1,$ the gradient descent approach works as well.
The case $p=1$ amounts to considering the intrinsic median together with the intrinsic absolute deviation.
In this case, we may apply a subgradient descent
which in the differentiable part amounts to rescaling the tangent vector given on the right-hand side of \eqref{eq:GradientDescentIntMean} 
to length $1$ and considering variable step sizes which are square-integrable but not integrable; 
see, e.g., \cite{arnaudon2013approximating}.

A speedup using the structure of the dynamic program is obtained by 
initializing with previous output. More precisely, when starting the iteration of the mean for data 
$f_{l+1:r},$ we can use the already computed mean for the data $f_{l:r}$
as an initial guess. We notice that this guess typically becomes even better 
the more data items we have to compute the mean for, i.e., the bigger $r-l$ is. This is important since this case is the 
computational more expensive part and a good initial guess reduces the number of iterations needed. 

We have the following theoretical guarantees.  
\begin{theorem}\label{thm:AlgProducesMinimizers}
	In a Cartan-Hadamard manifold, the dynamic programming scheme 
	produces a global minimizer for the univariate Mumford-Shah problem \eqref{eq:1dMS_mani_jumpFormulation}
	and the discrete Potts problem \eqref{eq:1dPotts_mani}, accordingly.   
\end{theorem}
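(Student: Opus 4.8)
The goal is to show that the dynamic programming scheme computes a \emph{global} minimizer for the univariate Mumford-Shah problem \eqref{eq:1dMS_mani_jumpFormulation} and the Potts problem \eqref{eq:1dPotts_mani}, provided $\mathcal M$ is a Cartan-Hadamard manifold. The plan is to split the argument into two parts: first, an exactness statement for the dynamic programming skeleton that holds on \emph{any} manifold and is purely combinatorial, and second, a verification that in a Cartan-Hadamard manifold the subproblems appearing in the recursion --- namely the computation of the interval approximation errors $\epsilon_{l,r}$ in \eqref{eq:epsilonGeneral} and the constant-fit errors \eqref{eq:epsilonConc} --- are indeed solved to global optimality by the numerical subroutines invoked (the proximal point / CPPA scheme for $\epsilon_{l,r}$, and the gradient/subgradient descent for intrinsic means and medians).

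For the first part, I would argue by induction on $r$ that $B_{\alpha,\gamma}(x^r)$ equals the true minimal value of $B_{\alpha,\gamma}$ on data $f_{1:r}$, and likewise $x^r$ is a minimizer. The key structural observation is that any signal $x$ decomposes according to the position $l$ of its last jump (with $l=1$ meaning no jump, i.e., the whole interval is one segment): on $\{1,\dots,l-1\}$ it is an arbitrary signal for data $f_{1:l-1}$, on $\{l,\dots,r\}$ it is jump-free, and the cost separates additively as $B_{\alpha,\gamma}(x_{1:l-1}) + \gamma + (\text{cost on the last segment})$, where the cost on the last segment is exactly the quantity $\epsilon_{l,r}$ of \eqref{eq:epsilonGeneral} (with the convention that the $\gamma$ term is absent when $l=1$). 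Minimizing first over the last segment and the earlier part independently, and then over $l$, gives precisely the recursion \eqref{eq:potts_value_candidate}, and the inductive hypothesis supplies optimality of the $x^{l-1}$. The Potts case is the specialization $p$ arbitrary but with $\epsilon_{l,r}$ replaced by \eqref{eq:epsilonConc}, i.e., a single constant fit per segment; the same decomposition argument applies verbatim. Existence of all the minimizers invoked along the way is guaranteed by Theorem~\ref{thm:ExMinimizers2D}.

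For the second part, the point is that a Cartan-Hadamard manifold is a (globally) Hadamard space, so the relevant energies are \emph{convex} along geodesics. For the Potts constant-fit \eqref{eq:epsilonConc}: with $q=2$ the functional $h \mapsto \sum_i \dist(h,f_i)^2$ is strictly convex, its minimizer (the intrinsic mean) is unique, and the Karcher gradient descent \eqref{eq:GradientDescentIntMean} converges to it; with $q=1$ the intrinsic median is the minimizer of a convex functional and the subgradient descent with square-summable, non-summable step sizes converges to a global minimizer by standard convex-on-Hadamard-space theory (as already used for Theorem~\ref{thm:ConvergenceAlgA}). For the Mumford-Shah segment errors $\epsilon_{l,r}$: the functional in \eqref{eq:epsilonGeneral} is, for $p \ge 1$ and $q\ge1$, a sum of convex functions of the form $\dist(\cdot,\cdot)^p$ and $\dist(\cdot,f_i)^q$ on a Hadamard manifold, hence convex, so the CPPA/PPPA proximal point scheme from Section~\ref{sec:TValgo} converges to a global minimizer by \cite[Theorem 3.1]{bacak2014convex} / \cite[Theorem 4]{weinmann2014total}; the $p=1$ case is exactly the $\ell^q$-$\TV$ setting covered by Theorem~\ref{thm:ConvergenceAlgA}. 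Feeding these globally optimal subproblem values into the exact recursion of part one yields a global minimizer of the full problem.

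The main obstacle is not the combinatorics of part one --- that is a routine, if slightly bookkeeping-heavy, induction --- but rather being careful in part two about what ``the dynamic programming scheme produces a global minimizer'' means in the presence of the non-uniqueness phenomena flagged repeatedly in the excerpt: geodesics, midpoints, means and medians need not be unique in general. On a Cartan-Hadamard manifold this is precisely where the negative-curvature hypothesis pays off: geodesics between any two points are unique, so the segment energies have no ambiguity, and the mean is unique; the only residual non-uniqueness is in the median ($q=1$), which is handled by selecting any minimizer and does not affect the optimal \emph{value} propagated through the recursion. I would therefore phrase the statement and proof in terms of ``a global minimizer'' and make explicit that the Hadamard property is used exactly (i) to make each subproblem a convex optimization problem with attained, and for $q=2$ unique, optimum, and (ii) to invoke the existing convergence guarantees for the proximal point and (sub)gradient schemes.
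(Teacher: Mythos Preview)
Your approach is correct and is the standard one; the paper itself does not give a proof but defers to \cite{weinmann2015mumford}, where essentially the same two-step argument (exactness of the recursion plus global solvability of the interval subproblems via convexity on Hadamard spaces) is carried out.

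One small point worth tightening in your Part~1: when you write that on the last segment the signal is ``jump-free'' and the cost there ``is exactly'' $\epsilon_{l,r}$, note that $\epsilon_{l,r}$ in \eqref{eq:epsilonGeneral} is the \emph{unconstrained} $L^q$--$V^p$ minimum on $\{l,\dots,r\}$, not the minimum subject to $\dist(h_i,h_{i+1})\le s$. The clean way to make the recursion exact is to pass to the equivalent formulation
\[
\min_{x} B_{\alpha,\gamma}(x)\;=\;\min_{J\subset\{1,\dots,N-1\}}\ \min_{x}\ \Big\{\tfrac{1}{q}\sum_i \dist(x_i,f_i)^q+\tfrac{\alpha}{p}\sum_{i\notin J}\dist(x_i,x_{i+1})^p+\gamma|J|\Big\},
\]
which follows from the Blake--Zisserman form \eqref{eq:1dMS_manifold_truncatedFormulation}, and then decompose by the last element of $J$ rather than by the last ``jump of $x$''. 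With this reformulation the inner minimum over $x$ on each segment is precisely the unconstrained $\epsilon_{l,r}$, and your induction goes through verbatim. Everything else in your plan (convexity of $\dist^p$ and $\dist^q$ on the Hadamard product space, convergence of CPPA/PPPA via \cite{bacak2014convex,weinmann2014total}, uniqueness of the intrinsic mean and convergence of Karcher's gradient descent, subgradient descent for the median) is correct as stated.
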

A proof can be found in \cite{weinmann2015mumford}. In this reference, also guarantees are obtained for Potts problems for general complete Riemannian manifold under additional assumptions; cf. \cite[Theorem 3]{weinmann2015mumford}. 
In Figure~\ref{fig:ms_1D}, the algorithm for the univariate case is illustrated  
for Mumford-Shah regularization for the Cartan-Hadamard manifold of positive matrices.

\noindent \textbf{Multivariate Mumford-Shah and Potts problems.}
We now consider Mumford-Shah and Potts regularization 
for manifold-valued images.
Even for scalar data, these problems are NP hard 
in dimensions higher than one even  \cite{veksler1999efficient, alexeev2010complexity}.
Hence, finding global minimizers is not tractable anymore in the multivariate case in general.
The goal is to derive approximative strategies that perform well in practice.
We present a splitting approach: we rewrite \eqref{eq:msDiscrete}
as the constrained problem
\begin{equation}\label{eq:constrained}
\min_{x_1,..., x_R}~\sum_{s=1}^R  \frac{1}{qR} \dist^q(x_s, f) + \alpha\omega_s \Psi_{a_s}(x_s)  \qquad
\text{s. t. }  x_s = x_{s+1},  \, s \in \{1,\ldots,R\}, 
\end{equation}
with the convention $x_{R+1} = x_1.$
We use a penalty method (see e.g.~\cite{bertsekas1976multiplier})
to obtain the unconstrained  problem
\begin{equation*}
\min_{x_1,..., x_R}~\sum\nolimits_{s=1}^R \omega_s qR \alpha \Psi_{a_s}(x_s) +  \dist^q(x_s, f)
+ \mu_k \dist^q(x_s, x_{s+1}).
\end{equation*}
We use an increasing coupling sequence $(\mu_k)_k$ which fulfills the summability condition
$\sum_k \mu_{k}^{-1/q} < \infty.$
This specific splitting allows us to minimize the functional block wise,
that is, with respect variables $x_1,..., x_R$ separately. 
Performing blockwise minimization yields the algorithm
\begin{align}\label{eq:OurSplitting}
\begin{cases}
x_1^{k+1} \in \argmin_{x}qR\omega_1 \alpha \Psi_{a_1}(x) + \dist^q(x, f)  
+ \mu_k \dist^q(x, x_{R}^k), 
 \\
x_2^{k+1} \in \argmin_{x}qR\omega_2 \alpha \Psi_{a_2}(x) +  \dist^q(x, f) 
+ \mu_k \dist^q(x, x_{1}^{k+1}), \\
\quad\vdots \\
x_R^{k+1} \in		\argmin_{x}qR\omega_R  \alpha\Psi_{a_R}(x) +  \dist^q(x, f) 
+ \mu_k \dist^q(x, x_{R-1}^{k+1}).
\end{cases}
\end{align}
We notice that each line of \eqref{eq:OurSplitting}
decomposes into univariate subproblems of Mumford-Shah and Potts 
type, respectively. The subproblems  are almost identical 
with the univariate problems above.
Therefore, we can use the algorithms developed above with a few minor modification.
Details may be found in \cite{weinmann2015mumford}.

There is the following result ensuring that the algorithm terminates.
\begin{theorem} \label{thm:Convergence2D}
	For Cartan-Hadamard manifold-valued images 
	the algorithm \eqref{eq:OurSplitting} for both the Mumford-Shah and the Potts problem converge.
\end{theorem}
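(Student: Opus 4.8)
The plan is to view \eqref{eq:OurSplitting} as a penalty method with Gauss-Seidel-type inner sweeps and to show that, for each $s$, the sequence $(x^k_s)_k$ is Cauchy --- hence convergent, since a Cartan-Hadamard manifold is complete --- with a common limit satisfying the coupling constraint of \eqref{eq:constrained}. I would organise the argument around three ingredients: (i) well-posedness of every line of \eqref{eq:OurSplitting}, so that the scheme is well defined and the inner minimizers are \emph{global}; (ii) an a priori bound confining all iterates to one fixed compact set; and (iii) a quantitative estimate showing that successive iterates approach each other at the summable rate $\mu_k^{-1/q}$.

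For (i): since $\Psi_{a_s}$ couples pixels only along the single direction $a_s$, each line of \eqref{eq:OurSplitting} decouples into independent univariate problems, each of the form of a univariate Mumford-Shah problem \eqref{eq:1dMS_mani_jumpFormulation} (respectively Potts problem \eqref{eq:1dPotts_mani}) in which the fidelity $\tfrac1q\dist^q(\cdot,f)$ is replaced by the augmented fidelity $\dist^q(\cdot,f)+\mu_k\dist^q(\cdot,x^{k+1}_{s-1})$, still a coercive, lower semicontinuous sum of $q$-distance terms. Existence of minimizers then follows as in Theorem~\ref{thm:ExMinimizers2D}, and in a Cartan-Hadamard manifold the dynamic programming scheme of Section~\ref{sec:MumShaalgo} returns a \emph{global} minimizer of each subproblem as in Theorem~\ref{thm:AlgProducesMinimizers}, the augmented fidelity only modifying the interval approximation errors $\epsilon_{l,r}$, which remain of $L^q$-$V^p$ (respectively weighted mean/median) type. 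This globality is precisely what will let me compare $x^{k+1}_s$ against arbitrary competitors.

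For (ii) and (iii) I would use the single structural fact that both potentials in \eqref{eq:psiMumfiPotts} are \emph{bounded}, so that $\Psi_{\max}:=\sup_{s,x}\Psi_{a_s}(x)<\infty$. Comparing the global minimizer $x^{k+1}_s$ of the $s$-th line of \eqref{eq:OurSplitting} with the competitor $x^{k+1}_{s-1}$ (with the convention $x^{k+1}_0:=x^k_R$) and discarding nonnegative terms yields, first, $\dist^q(x^{k+1}_s,f)\le qR\omega_s\alpha\Psi_{\max}+\dist^q(x^{k+1}_{s-1},f)$ and, second, $\mu_k\dist^q(x^{k+1}_s,x^{k+1}_{s-1})\le qR\omega_s\alpha\Psi_{\max}+\dist^q(x^{k+1}_{s-1},f)$. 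Setting $\epsilon_k:=\mu_k^{-1/q}$, combining the two bounds via the triangle inequality and subadditivity of $t\mapsto t^{1/q}$, and iterating over one sweep $s=1,\dots,R$ produces a recursion $D_{k+1}\le(1+\epsilon_k)^RD_k+C\epsilon_k(1+\epsilon_k)^{R-1}$ for $D_k:=\dist(x^k_R,f)$ and a constant $C$ depending only on $R,\alpha,(\omega_s)_s,\Psi_{\max}$. Because $\sum_k\mu_k^{-1/q}<\infty$, the products $\prod_k(1+\epsilon_k)^R$ converge and the source terms are summable, so a discrete Gr\"onwall argument bounds $(D_k)_k$ uniformly, and then, via the first inequality chained over a sweep, so is $\dist(x^k_s,f)$ for all $s,k$. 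Hence every iterate lies in a fixed closed ball about $f$, which is compact by the Hopf-Rinow theorem.

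The uniform bound turns the second inequality into $\dist(x^{k+1}_s,x^{k+1}_{s-1})\le C'\mu_k^{-1/q}$ for a constant $C'$, and chaining the triangle inequality along the path $x^k_s\to x^k_{s+1}\to\cdots\to x^k_R\to x^{k+1}_1\to\cdots\to x^{k+1}_s$ gives $\dist(x^{k+1}_s,x^k_s)\le 2RC'\mu^{-1/q}_{k-1}$; summability of $(\mu_k^{-1/q})_k$ then makes $(x^k_s)_k$ Cauchy, hence convergent to some $x^*_s$, while $\dist(x^k_s,x^k_{s-1})\to0$ forces all the $x^*_s$ to coincide, so \eqref{eq:OurSplitting} converges to a common feasible limit. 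The Potts variant is handled verbatim, with $\Psi_{a_s}$ then merely bounded and lower semicontinuous instead of Lipschitz --- which is all the estimates require. I expect the main obstacle to be step (ii): since the penalty weight $\mu_k$ increases from sweep to sweep there is no functional decreasing monotonically along the algorithm, so boundedness of the iterates cannot be extracted from an energy decay and must instead come from the sharp per-update estimate together with the Gr\"onwall argument --- which is exactly where summability of $(\mu_k^{-1/q})_k$, rather than merely $\mu_k\to\infty$, is needed.
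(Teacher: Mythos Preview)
Your proposal is correct and follows what is essentially the standard route for this type of penalty--splitting scheme; the paper itself does not give a proof but refers to \cite{weinmann2015mumford}, where the argument is organised along the same lines you sketch. The three ingredients you isolate --- global solvability of each line via the univariate dynamic program (Theorem~\ref{thm:AlgProducesMinimizers}), uniform boundedness of the iterates, and summable increments $\dist(x^{k+1}_s,x^{k+1}_{s-1})\le C'\mu_k^{-1/q}$ --- are exactly the ones used there, and the decisive structural observation is precisely the one you single out: both potentials in \eqref{eq:psiMumfiPotts} are \emph{bounded}, so the regularizer contributes only a constant in the comparison with the competitor $x^{k+1}_{s-1}$.

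Two small remarks. First, the ``first'' inequality you derive, $\dist^q(x^{k+1}_s,f)\le qR\omega_s\alpha\Psi_{\max}+\dist^q(x^{k+1}_{s-1},f)$, is not actually needed for the recursion $D_{k+1}\le(1+\epsilon_k)^R D_k+C\epsilon_k(1+\epsilon_k)^{R-1}$: that recursion follows from the ``second'' inequality alone, via $\dist(x^{k+1}_s,x^{k+1}_{s-1})\le\epsilon_k\big((qR\omega_s\alpha\Psi_{\max})^{1/q}+\dist(x^{k+1}_{s-1},f)\big)$ together with the triangle inequality. Your first inequality then becomes redundant, since boundedness of $\dist(x^k_s,f)$ for intermediate $s$ already follows from the same recursion. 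Second, in step~(i) it is worth stating explicitly why the interval subproblems \eqref{eq:epsilonGeneral} with the augmented fidelity are solved \emph{globally} by the proximal point schemes of Section~\ref{sec:TValgo}: in a Cartan--Hadamard manifold the distance is convex, hence so are its $p$th and $q$th powers for $p,q\ge 1$, and the interval functional is therefore convex, which is what makes Theorem~\ref{thm:AlgProducesMinimizers} applicable with the modified data term.
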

A proof can be found in \cite{weinmann2015mumford}.

A result of the algorithm is illustrated in Figure~\ref{fig:ms_2D} 
for Mumford-Shah regularization in the Cartan-Hadamard manifold of positive matrices.
The data set was taken from the Camino project \cite{cook2006camino}.

\begin{figure}[t]
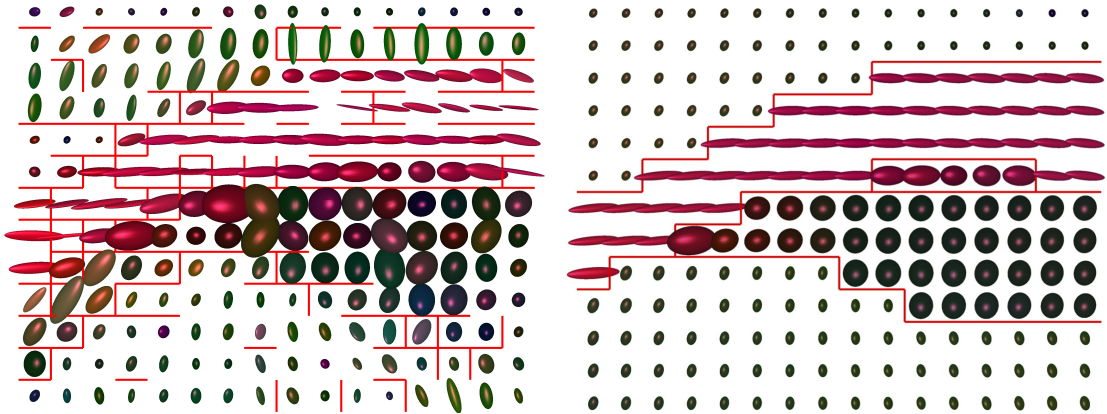

	\def\figfolder{experiments/Mumfi2D/}			
	
	\def\subfigwidth{0.48\textwidth}
	\def\figurewidth{0.65\textwidth}
	\def\figureheight{0.7\textwidth}
	\def\hs{\hspace{0.05\textwidth}}
		\includegraphics[width=.49\textwidth, trim=0 0 0 45, clip]
		{\figfolder BZ_2D_ex4_noisy} 		
			\raisebox{0.2ex}{
				\includegraphics[width=0.49\textwidth, trim=0 0 0 45, clip]
				{\figfolder BZ_2D_ex4_bz_partition}
				}
		
	\caption{{\em Left.} Part of a corpus callosum of a human brain \cite{cook2006camino}. 
		{\em Left:} Mumford-Shah regularization with $p,q=1.$ %
		The noise is significantly reduced and the edges are preserved. 
		Here, the edge set (depicted as red lines) of the regularization yields a segmentation.
	}
	\label{fig:ms_2D}
\end{figure}

\section{Dealing with Indirect Measurements: Variational Regularization of Inverse Problems for Manifold Valued Data}
\label{sec:InvProb}

In this section, we consider the situation when the data is not measured directly. More precisely, we consider the manifold valued analogue of the discrete inverse problem of reconstructing the signal $u$ in the equation 
$ \Ac u \approx f,$ with given noisy data $f$.
Here, $\Ac \in \RR^{K \times N} $ is a matrix with unit row sums (and potentially negative items), and $u$ is the objective to reconstruct. 
In the linear case, the corresponding variational model, given discrete data $f=(f_i)_{i=1}^K$,
reads
\begin{align}\label{eq:TichPhEuclidean}
\argmin_{u \in \mathbb{R}^N}  \frac{1}{q}\sum\nolimits_{i=1}^K \left|\sum\nolimits_{j=1}^N \Ac _{i,j}u_j-f_i \    \right|^q   +    R_\alpha(u).
\end{align} 
Here, the symbol $R_\alpha$ denotes a regularizing term incorporating prior assumption on the signal.
The process of finding $u$ given data $f$ via minimizing $\eqref{eq:TichPhEuclidean}$ is called Tikhonov-Phillips regularization.
For a general account on inverse problems and applications in imaging we refer to the books \cite{engl1996regularization,bertero1998introduction}.

In this section we consider models for variational (Tikhonov-Phillips) regularization
for indirect measurement terms in the manifold setup, 
we present algorithms for the proposed models
and we show the potential of the proposed schemes. The material is mostly taken from \cite{storath2018variational}.

\subsection{Models}
\label{sec:InvProbmodel}

We introduce models for variational (Tikhonov-Phillips) regularization
of indirectly measured data in the manifold setup. The approach is as follows: Given a matrix $\Ac = (\Ac_{i,j})_{i,j} \in \RR^{K \times N}$ with unit row sum, we replace the euclidean distance  in $|\sum\nolimits_j \Ac _{i,j}u_j-f_i \    |$ by the Riemannian distance $\dist(\cdot,f)$ in the complete, finite dimensional Riemannian manifold $\Mc$ and 
the weighted mean $\sum_j \Ac _{i,j}u_j$ by the weighted Riemannian center of mass \cite{karcher1977riemannian, kendall1990probability}
denoted by $\mean(\Ac_{i,\cdot},u)$
which is given by 
\begin{align}\label{eq:IntrMeanIntro}
\mean(\mathcal A_{i,\cdot},u) = \argmin_{v \in \mathcal{M}} \ \sum\nolimits_j  \mathcal A_{i,j} \ \dist(v,u_j)^2.
\end{align}
We consider the manifold analogue of the variational problem \eqref{eq:TichPhEuclidean} given by
\begin{align}\label{eq:ManiInvPro}
\argmin_{u \in \mathcal{M}^N} \frac1q\sum\nolimits_{i=1}^K \dist\left(\mean(\mathcal A_{i,\cdot},u),f_i\right)^q + \  R_\alpha(u).
\end{align}
Here $R_\alpha(u)$ is a regularizing term, for instance 
\begin{align}
R_\alpha(u) = \alpha \TV(u), \qquad \text{or} \qquad  R_\alpha(u) = \TGVat(u),
\end{align}
where $\TV(u)$ denotes the total variation as discussed in Section \ref{sec:TV},
and $\TGVat(u)$ denotes the total generalized variation 
for the discrete manifold valued target $u$ as discussed in Section \ref{sec:higherOandTGV},
for instance, the Schild variant and the parallel transport variant of TGV.

We note that also other regularizers $R$ such as the 
Mumford-Shah and Potts regularizers of Section~\ref{sec:MumSha} and the 
wavelet sparse regularizers of Section~\ref{sec:WavSparse} may be employed.

We point out that our setup includes the manifold analogue of convolution operators (a matrix with constant entries on the diagonals), e.g., modeling blur. Further, we notice that the discussion includes the multivariate setup (by serializing).

There are the following well-posedness results for the variational problems, i.e., 
results on the existence of minimizers.
For a general regularizer $R_\alpha$, under a coercivity type condition in the manifold setup
the existence of a minimizer is guaranteed as the following theorem shows.

\begin{theorem}\label{thm:ExistenceCondR}
	We consider a sequence of signals $(u^{k})_k$ in $\Mc^N$ and use the notation $\diam(u^{k})$ to denote the diameter of a single element $u_k$ (viewed as $N$ points in $\Mc$) of the sequence $\{ u_k \,| \, k \in \mathbb{N} \}$.
	If $R_\alpha$ is a regularizing term such that $R_\alpha(u^{k}) \to \infty,$ as $\diam(u^{k}) \to \infty,$ 
	and $R_\alpha$ is lower semicontinuous, then the variational problem \eqref{eq:ManiInvPro} with indirect measurement term has a minimizer.
\end{theorem}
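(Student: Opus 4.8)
The plan is to apply the direct method of the calculus of variations. Writing
\[ F(u) := \tfrac1q \sum\nolimits_{i=1}^K \dist\big(\mean(\Ac_{i,\cdot},u),f_i\big)^q + R_\alpha(u), \]
I would first note that $F$ is finite (e.g.\ on constant signals, where the regularizer vanishes and the data term is finite), so a minimizing sequence $(u^k)_k \subset \Mc^N$ exists with $\sup_k F(u^k)<\infty$. Since the data term is nonnegative, this forces $\sup_k R_\alpha(u^k)<\infty$, and then the coercivity hypothesis — $R_\alpha(u^k)\to\infty$ whenever $\diam(u^k)\to\infty$ — rules out $\diam(u^k)\to\infty$ along any subsequence, so $(\diam(u^k))_k$ stays bounded.

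Next I would exploit that the data term is bounded along $(u^k)_k$ as well: each Riemannian centre of mass $m_i^k:=\mean(\Ac_{i,\cdot},u^k)$ then satisfies $\dist(m_i^k,f_i)\le (qF(u^k))^{1/q}$, so $(m_i^k)_k$ stays in a bounded subset of $\Mc$. The crucial step is to pass from boundedness of the $m_i^k$ to boundedness of the individual points $u^k_j$. Using that $\Ac$ has unit row sums, for a row with nonnegative entries $m_i^k$ lies in a geodesic ball of radius $\le \diam(u^k)$ around $u^k_1$, whence $\dist(u^k_1,f_i)\le \dist(u^k_1,m_i^k)+\dist(m_i^k,f_i)$ is bounded; combined with the diameter bound this confines the whole tuple $u^k$ to a fixed closed bounded subset of $\Mc^N$. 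By completeness of $\Mc$ and Hopf--Rinow this set is compact, so after passing to a subsequence $u^k\to u^\ast$ in $\Mc^N$.

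Finally I would pass to the limit. $R_\alpha$ is lower semicontinuous by hypothesis, and for the data term one needs that $u\mapsto \dist(\mean(\Ac_{i,\cdot},u),f_i)$ is lower semicontinuous with respect to convergence in $(\Mc,\dist)^N$; this follows from continuity of $\dist$ together with (lower semi)continuity of the weighted centre-of-mass map, argued — as for $\dc$ in Proposition~\ref{prop:tv2_tgv_well_posed} — by a compactness argument applied to the minimisation problem \eqref{eq:IntrMeanIntro} defining $\mean$. Then $F(u^\ast)\le\liminf_k F(u^k)=\inf F$, so $u^\ast$ is a minimizer.

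The hard part will be the two places where the Riemannian centre of mass enters structurally: bounding the $u^k_j$ from the bounded $m_i^k$, and the lower semicontinuity of $u\mapsto\mean(\Ac_{i,\cdot},u)$. Both are delicate when $\Ac$ has negative entries, since then $m_i^k$ need not lie in the geodesic convex hull of $\{u^k_j\}_j$ and may fail to be unique, so these objects are genuinely set-valued. In that generality the convex-hull bound must be replaced by a quantitative stability estimate for the centre of mass in terms of the weights and $\diam(u^k)$ (together with a curvature-type condition ensuring well-posedness of \eqref{eq:IntrMeanIntro}), and the lower semicontinuity handled via measurable selections or via the infimum formulation of the data term; for nonnegative weights and data in a region where the centre of mass is unique, everything reduces to the routine direct-method argument above.
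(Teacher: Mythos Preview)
The present chapter does not actually supply a proof; it only records that the result is Theorem~1 of \cite{storath2018variational} and refers the reader there. Your direct-method outline is the natural strategy and almost certainly coincides with what the referenced paper does: bound $R_\alpha$ along a minimizing sequence, use the diameter-coercivity to control $\diam(u^k)$, anchor the sequence via the data term, extract a convergent subsequence by Hopf--Rinow, and conclude by lower semicontinuity. You have also correctly isolated the two places where genuine work is required --- controlling the distance between $u^k$ and the weighted centre of mass, and lower semicontinuity of $u\mapsto\mean(\Ac_{i,\cdot},u)$ --- and that both become delicate when $\Ac$ has negative entries and the mean may be non-unique or even ill-posed.

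Two minor points of care in your sketch. First, you implicitly use $R_\alpha\ge 0$ (to bound the data term from $F(u^k)$) and that $R_\alpha$ vanishes on constants (to get $\inf F<\infty$); neither is part of the stated hypotheses, though both hold for all concrete regularizers in the chapter and are harmless implicit assumptions. Second, the assertion that for a nonnegative row $m_i^k$ lies within $\diam(u^k)$ of $u_1^k$ is not quite right as written: comparing the value of $v\mapsto\sum_j\Ac_{i,j}\dist(v,u_j^k)^2$ at $v=u_1^k$ and at $v=m_i^k$ yields $\dist(m_i^k,u_1^k)\le 2\,\diam(u^k)$ rather than $\diam(u^k)$, and for signed rows the constant must depend on $\sum_j|\Ac_{i,j}|$. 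This is cosmetic and does not affect the argument.
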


This theorem is formulated as Theorem 1 in \cite{storath2018variational} and proved there. In particular, it applies 
to the $\TV$ regularizers and the their analogues considering $q${\em th} variation instead of total variation as well as mixed first-second order regularizers
of the form $\alpha_1 \TV + \alpha_0 \TV^2$ with $\alpha_1>0$, $\alpha_0 \geq 0$. 
\begin{theorem}\label{thm:ExistenceMinimizers}
	The inverse problem \eqref{eq:ManiInvPro} for manifold-valued data with $\TV$ regularizer 
	has a minimizer.
	The same statement  applies to mixed first and second order regularizers 
	of the form $\alpha_1 \TV + \alpha_0 \TV^2$
	with $\alpha_1,\alpha_0 \in [0,\infty)$, $\alpha_1 >0$.
\end{theorem}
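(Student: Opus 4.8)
The plan is to deduce this theorem directly from Theorem~\ref{thm:ExistenceCondR}, whose hypotheses are lower semicontinuity of $R_\alpha$ together with the coercivity-type condition $R_\alpha(u^k) \to \infty$ whenever $\diam(u^k) \to \infty$. So the whole task reduces to verifying these two properties for $R_\alpha = \alpha\,\TV$ and, more generally, for $R_\alpha = \alpha_1\TV + \alpha_0\TV^2$ with $\alpha_1 > 0$, $\alpha_0 \ge 0$. For lower semicontinuity I would argue exactly as in the proof of Proposition~\ref{prop:tv2_tgv_well_posed}: each summand $\dist(x_i,x_{i+1})$ (resp.\ $\dc$, $\dcc$ in the $\TV^2$ part) is continuous, indeed Lipschitz, in the relevant arguments with respect to the product distance on $(\Mc,\dist)^N$ by the triangle inequality; finite sums and compositions with the monotone $\ell^p$-aggregation are then continuous, hence certainly lower semicontinuous. (For $\TV^2$ one invokes the lower semicontinuity of $\dc,\dcc$ established in Proposition~\ref{prop:tv2_tgv_well_posed}.) Since $\alpha_1,\alpha_0 \ge 0$, the sum is lower semicontinuous as well.

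For the coercivity condition the key point is that $\TV$ alone already controls the diameter. Fix $k$ and let $u^k = (u^k_i)_i$; choose indices $i_0,i_1$ realizing $\diam(u^k) = \dist(u^k_{i_0},u^k_{i_1})$. In the univariate case, chaining along the path $i_0, i_0+1, \dots, i_1$ and using the triangle inequality gives
\[
\diam(u^k) = \dist(u^k_{i_0},u^k_{i_1}) \le \sum_{i=i_0}^{i_1-1} \dist(u^k_i,u^k_{i+1}) \le \sum_{i=1}^{N-1} \dist(u^k_i,u^k_{i+1}).
\]
The right-hand side is, up to the $\ell^p$-versus-$\ell^1$ normalization (which only costs a fixed dimensional constant because all vectors have boundedly many components), controlled by $\TV(u^k)$; in the bivariate case one argues the same way along a monotone grid path from $(i_0,j_0)$ to $(i_1,j_1)$, using that each consecutive grid step is one of the horizontal/vertical differences appearing in the sum defining $\TV$. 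Hence $\TV(u^k) \ge c\,\diam(u^k)$ for a constant $c > 0$ depending only on $N$ (and $M$) and $p$, so $\alpha_1\TV(u^k) \to \infty$ as $\diam(u^k) \to \infty$; adding the nonnegative term $\alpha_0\TV^2(u^k)$ preserves this. This is precisely the hypothesis of Theorem~\ref{thm:ExistenceCondR}, which then yields a minimizer.

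I do not expect a genuine obstacle here; the statement is essentially a corollary. The one point requiring a little care is making the passage from the $\ell^p$-aggregated regularizer to the plain sum $\sum \dist(u_i,u_{i+1})$ quantitative and uniform in $k$ — but since the number of summands is fixed ($N-1$, or $O(NM)$ in two dimensions) and norm equivalence on $\RR^{N-1}$ gives $\|v\|_1 \le (N-1)^{1-1/p}\|v\|_p$ in the wrong direction and $\|v\|_p \le \|v\|_1$ in the right one, the elementary bound $\TV(u) \ge \sum_i \dist(u_i,u_{i+1}) \cdot (\text{const})$ holds with a constant independent of $k$, which is all that is needed. The only other caveat is that in the bivariate case one should note that the chosen grid path has length at most $N+M$, again a fixed constant, so the chaining estimate loses only a fixed factor. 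A full write-up would simply record these two elementary estimates and then cite Theorem~\ref{thm:ExistenceCondR}; the authors in fact defer the details to \cite{storath2018variational}.
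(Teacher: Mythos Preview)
Your proposal is correct and follows exactly the route the paper indicates: the paper states that Theorem~\ref{thm:ExistenceCondR} ``in particular applies to the $\TV$ regularizers \ldots\ as well as mixed first-second order regularizers of the form $\alpha_1\TV + \alpha_0\TV^2$ with $\alpha_1>0$,'' and then refers to \cite{storath2018variational} for the details. Your verification of the two hypotheses---continuity/lower semicontinuity of the summands and the chaining bound $\TV(u)\ge \diam(u)$ via the triangle inequality along a grid path---is precisely what is needed; note that the $\ell^p$-aggregation worry is in fact harmless since $(a^p+b^p)^{1/p}\ge a$ already gives the inequality in the right direction with constant~$1$.
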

This statement is part of \cite[Theorem 6]{storath2018variational} and proved there.
We note that,
although the 
$\TGVat$ regularizer using either the Schild or the parallel transport variant of Section~\ref{sec:higherOandTGV} is lower semicontinuous (cf. \cite{holler18tgvm_mh})
 Theorem~\ref{thm:ExistenceCondR} does not apply.
The same issue occurs with pure $\TV^2$ regularization.
To overcome this, results with weaker conditions on $R$ and additional conditions on $\mathcal A$ have been established to ensure the existence of minimizers; cf. the discussion in
\cite{storath2018variational}, in particular 
 \cite[Theorem 7]{storath2018variational}. 
 The mentioned theorem applies to $\TGVat$ and pure second order TV regularizers. The conditions on $\mathcal A$ are in particular fulfilled if $\mathcal A$ is such that the data term fulfills the 
 (significantly stronger) coercivity type condition 
 \begin{align}\label{eq:coercData}
 \sum\nolimits_{i=1}^K\dist\left(\mean(\mathcal A_{i,\cdot},u^{n}),f_i\right)^q \to \infty, 
\qquad \text{as}\qquad
 \diam \left(u^{n}\right) \to \infty.
\end{align}
This coercivity type condition is for instance fulfilled if $\mathcal A$
fulfills the manifold analogue of lower boundedness, see \cite{storath2018variational} for details.
Furthermore, the conditions hold if the underlying manifold is compact. As a result we formulate the following theorem.

\begin{theorem}\label{thm:ExistenceMinimizersSecondOrderCompact}
	Assume that either $\mathcal M$ is a compact manifold, 
	or assume that $\mathcal A$ fulfills the coercivity type condition 
	\eqref{eq:coercData}. 
	Then, the inverse problem \eqref{eq:ManiInvPro} for data living in $\mathcal M^K$ with
	$\TGVat$ regularization using either the Schild or the parallel transport variant of Section~\ref{sec:higherOandTGV} has a minimizer.
	The same statement applies to (pure) second order $\TV^2$ regularization.
\end{theorem}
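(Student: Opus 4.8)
The plan is to run the direct method of the calculus of variations, treating the two hypotheses of the theorem separately. If $\mathcal M$ is compact, then so is $\mathcal M^N$, and it suffices to verify that the objective in \eqref{eq:ManiInvPro} is lower semicontinuous; a minimizer then exists by Weierstrass. The regularizer is lower semicontinuous by the results cited earlier -- for the two $\STGVat$ variants (Schild's ladder and parallel transport) this is established in \cite{holler18tgvm_mh}, and for pure $\TV^2$ it is Proposition~\ref{prop:tv2_tgv_well_posed} -- while the data term $u \mapsto \tfrac1q\sum_{i=1}^K \dist(\mean(\mathcal A_{i,\cdot},u),f_i)^q$ is lower semicontinuous once one knows the appropriate closedness of the (in general set-valued) weighted Riemannian center of mass, which is provided in \cite{storath2018variational}. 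Being a sum of two lower semicontinuous functionals, the objective is lower semicontinuous, so the compact case is immediate.

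For the coercive case, assume \eqref{eq:coercData} and take a minimizing sequence $(u^k)_k \subset \mathcal M^N$ with objective values bounded by some $c<\infty$. Since $R_\alpha \ge 0$, the data term $\sum_{i=1}^K \dist(\mean(\mathcal A_{i,\cdot},u^k),f_i)^q$ is then bounded by $qc$, and the contrapositive of \eqref{eq:coercData} forces $\diam(u^k)$ to stay bounded, say by $D$. The next step is to anchor the sequence in space: if some coordinate, say $u^k_1$, left every compact subset of $\mathcal M$, then (because $\diam(u^k)\le D$) all coordinates would, and hence so would each weighted center of mass $\mean(\mathcal A_{i,\cdot},u^k)$. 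The latter uses the unit-row-sum property $\sum_j \mathcal A_{i,j}=1$ together with the triangle inequality: for a point $v$ at distance $r\gg D$ from the cluster $\{u^k_j\}_j$, the defining functional $\sum_j \mathcal A_{i,j}\dist(v,u^k_j)^2$ is bounded below by $r^2 - O(r) + O(1)$, so its minimizer must stay within a bounded distance of $u^k_1$ and therefore escapes to infinity along with it -- contradicting boundedness of the data term since the $f_i$ are fixed.

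Consequently $(u^k)_k$ remains in a fixed closed and bounded, hence (by Hopf--Rinow) compact, subset of $\mathcal M^N$, so a subsequence converges to some $u^\ast$ in $(\mathcal M,\dist)^N$. Passing to the limit with the lower semicontinuity of the data term and of $R_\alpha$ established above shows that $u^\ast$ attains the infimum, which proves the theorem. The statement for pure second-order $\TV^2$ follows along exactly the same lines, using Proposition~\ref{prop:tv2_tgv_well_posed} for the lower semicontinuity of $\TV^2$.

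I expect the genuine work to be the confinement step in the coercive case, i.e.\ turning ``objective bounded'' -- equivalently ``data term bounded'' plus, via \eqref{eq:coercData}, ``diameter bounded'' -- into ``the minimizing sequence lies in one fixed compact set.'' This is where the unit-row-sum structure of $\mathcal A$ is used essentially, and it must be combined carefully with the fact that $\mean(\mathcal A_{i,\cdot},\cdot)$ is in general set-valued (so one argues with selections, or directly with the value of the defining functional), exactly as carried out in \cite[Theorem 7 and its proof]{storath2018variational}. The remaining ingredients -- lower semicontinuity of $\TV^2$ and of the $\STGVat$ variants, and Hopf--Rinow compactness -- are standard or imported from the cited references.
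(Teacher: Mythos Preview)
Your outline is correct and matches the paper's approach, with the caveat that the paper does not prove the statement in-line at all: it simply records that the compact case is \cite[Corollary 1]{storath2018variational} and the coercive case is a special case of \cite[Theorems 8 and 9]{storath2018variational}. Your direct-method sketch---lower semicontinuity of the regularizer (imported from \cite{holler18tgvm_mh} for the two $\TGVat$ variants and from Proposition~\ref{prop:tv2_tgv_well_posed} for $\TV^2$), lower semicontinuity of the data term via closedness of the set-valued weighted mean, and the anchoring step that upgrades ``bounded diameter'' to ``contained in a fixed compact set'' using the unit-row-sum structure of $\mathcal A$---is precisely the argument those references carry out, and you have correctly identified the confinement step as the only place where genuine work beyond standard ingredients is needed. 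The one cosmetic discrepancy is that you point to \cite[Theorem 7]{storath2018variational} whereas the paper invokes Theorems 8 and 9 of the same reference; otherwise your reconstruction is faithful.
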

The part of Theorem~\ref{thm:ExistenceMinimizersSecondOrderCompact} 
concerning compact manifolds $\mathcal M$ 
is the statement of \cite[Corollary 1]{storath2018variational},
the part concerning the coercivity type condition is a special case of \cite[Theorem 8, Theorem 9]{storath2018variational}.

\subsection{Algorithmic Realization}
\label{sec:InvProbalgo}

We consider the numerical solution of \eqref{eq:ManiInvPro}. 
For differentiable data terms ($q>1$), we build on the concept of a generalized forward backward-scheme.
In the context of DTI, such a scheme has been proposed in  \cite{baust2016combined}.
We discuss an extension by a trajectory method and a Gau\ss-Seidel type update scheme
which significantly improves the performance compared to the basic scheme.

\noindent \textbf{Basic Generalized Forward Backward Scheme.} 
We denote the functional in \eqref{eq:ManiInvPro} by $\mathcal F$ and decompose it into the data term  $\mathcal D$ and the regularizer $R_\alpha$ which we further decompose into data atoms $(\mathcal D_i)_i$ and regularizer atoms $(R_\alpha)_k,$ i.e.,
\begin{align}\label{eq:DecomposeDataReg}
\mathcal F(u) = \mathcal D (u) +   R_\alpha(u) = 
\sum\nolimits_{i=1}^K  \mathcal D_i (u)  +  \sum\nolimits_{l=1}^{L}  (R_\alpha)_l(u)
\end{align}
with 
$
\mathcal D_i (u)    := \frac1q\dist(\mean(\mathcal A_{i,\cdot},u),f_i)^q,	
$
for $i=1,\ldots,K.$ Examples for decompositions 
$R_\alpha(u) = \sum\nolimits_{l=1}^{L}  (R_\alpha)_l(u)$ of $\TV$ and $\TGVat$ regularizers are given 
in Section~\ref{sec:TV} and in Section~\ref{sec:higherOandTGV}, respectively.

The basic idea of a generalized forward-backward scheme is to perform a gradient step for the explicit term, here $\mathcal D$,  as well as a proximal mapping step for each atom of the implicit term, here $(R_\alpha)_l$. 
(Concerning the computation of the corresponding proximal mappings for the $\TV$ and $\TGVat$ regularizers
of Sections~\ref{sec:TV} and \ref{sec:higherOandTGV}, we refer to these sections.) 
We now focus on the data term $\mathcal D.$
The gradient of $\mathcal D$ w.r.t. the variable $u_j,$ $j \in \{1,\ldots,N\},$ 
decomposes as
\begin{align}
\nabla_{u_j} \mathcal D (u) =    \sum\nolimits_{i=1}^K  \nabla_{u_j} \mathcal D_i (u).
\end{align} 
The gradient of $\mathcal D_i$ w.r.t.\ $u_j$ 
can then be computed rather explicitly using Jacobi fields. 
Performing this computation is a central topic of the paper \cite{storath2018variational}.
A corresponding result is \cite[Theorem 11]{storath2018variational}. 
The overall algorithm is summarized in Algorithm \ref{alg:fb_splitting}.
\begin{algorithm}[t]
  \begin{algorithmic}[1]
\State \textbf{FBS}($u^0,(\lambda_k)_k$)

\State $k=0$, 
\State \quad \textbf{repeat} until stopping criterion fulfilled 
\State \qquad  \textbf{for} $j=1,\ldots,N$
\State  \qquad \qquad $u_j^{k+0.5} = 
 \exp_{u_j^{k}} \left(-\lambda_k 
 \sum\nolimits_{i=1}^K \nabla_{u_j} \mathcal D_i \left(u^{k}\right)\right)$ 
\State \qquad  \textbf{for} $l=1,\ldots,L$
\State  \qquad  \qquad  $u^{k+0.5+ l/2{L}} =  \prox_{\lambda_k (R_\alpha)_l}( u^{k+0.5+ (l-1)/2{L}})$

\State \qquad  $k\gets k+1$
\State \Return{$u^k$}
\end{algorithmic}
\caption{FB-splitting for solving $\min_u \mathcal D(u) +  R_\alpha(u)$\label{alg:fb_splitting}}
\end{algorithm}
Note that there, for the explicit gradient descend part, we use the $k$th iterate 
$u^k=(u_1^k,\ldots,u_N^k)$ as base point for computing the gradients w.r.t. all data atoms $\mathcal D_i,$ $i=1,\ldots,K$ and all items $u_j,$ $j \in \{1,\ldots,N\}.$
This corresponds to a Jacobi type update scheme.  
During the iteration, the parameter $\lambda_k>0$ is decreased fulfilling $\sum_k \lambda_k = \infty$
and  
$\sum_k \lambda_k^2 < \infty.$ 
Recall that, for the regularizers $R_\alpha=\alpha\TV$ and 
$R_\alpha=\TGVat$ using either the Schild or the parallel transport variant of Section~\ref{sec:higherOandTGV},
the computation of line 6 in Algorithm \ref{alg:fb_splitting} can be parallelized as explained in
Section~\ref{sec:TV} and Section~\ref{sec:higherOandTGV}.

\noindent \textbf{A Generalized Forward Backward Scheme with Gau\ss-Seidel Update and a Trajectory Method.}
A well-known issue when considering gradient descent schemes is to find a suitable parameter choice for the $(\lambda_k)_k$.
Often a step size control based on line search techniques is employed. 
Above, there are two particular issues when employing an adaptive step size strategy: 
First, a single data atom $\mathcal D_{i'}$ may require a low step size whereas the other $\mathcal D_i$ would allow for much larger steps, but in the standard form one has to use the small step size for all $\mathcal D_i$.
Second, a small stepsize restriction from a single $\mathcal{D}_{i'}$  also yields a small stepsize in the the proximal mapping for the regularization terms.
Together, a small step size within an atom of the data term results in a small step size for the whole loop of the iteration Algorithm \ref{alg:fb_splitting}.

In order to overcome these step size issue, the paper \cite{storath2018variational} proposes to employ a Gauss-Seidel type update scheme together with a trajectory method. To explain the idea, we first replace the update of lines 4/5 of Algorithm \ref{alg:fb_splitting} by
\begin{equation} \label{eq:algFBwithGSupdate}  \left\{
\begin{aligned}
&\text{ for } i=1,\ldots,K\\
&\quad \text{ for }j=1,\ldots,N   \\
& \quad \qquad u_j^{k+i/2K} =  
\exp_{u_j^{k + (i-1)/2K}} \left(-\lambda_k \nabla_{u_j} \mathcal D_i (u^{k + (i-1)/2K})\right).
\end{aligned} \right. 
\end{equation}
Here, the computation of the gradients is performed in a cyclic way w.r.t.\ the $\mathcal D_i$ 
which corresponds to a Gau\ss-Seidel type update scheme.
This in particular has the following advantage:  
if we face a small step size for a particular $\mathcal D_{i'},$ 
instead of decreasing the step size for the whole loop, we may employ the following
{\em trajectory method}.
Instead of using a single geodesic line for the decay w.r.t.\ the atom $\mathcal D_{i}$ at iteration $k$,
we follow a polygonal geodesic path. That is, at iteration $k$, we do not only carry out a single but possibly multiple successive gradient descent steps w.r.t. $\mathcal D_{i}$, where the length of each step is chosen optimal for the currenct direction for $\mathcal D_i$ (by a line search strategy) and the descent steps are iterated until the sum of the step ``times'' for $\mathcal D_i$ reaches $\lambda_k$.
Details can be found in \cite{storath2018variational}.
This way, a global step size choice with all atoms (potentially negatively) influencing each other, is replaced by an autonomous step size choice for each atom.  
We denote the resulting operator by $\traj_{\lambda_k \mathcal D_{i}}$ for a data atom $\mathcal D_{i}.$
The overall algorithm is subsumed in Algorithm \ref{alg:fb_splitting_trajectory}.

\begin{algorithm}[t]
  \begin{algorithmic}[1]
\State \textbf{FBSTraj}($u^0,(\lambda_k)_k$)

\State $k=0$, 
\State \quad \textbf{repeat} until stopping criterion fulfilled

\State \qquad \textbf{for} $i=1,\ldots,K$

\State  \qquad \quad
$u^{k+i/2K} =  
  \traj_{\lambda_k \mathcal D_{i}}  \left(u^{k + (i-1)/2K}\right)$
\State \qquad  \textbf{for} $l=1,\ldots,L$
\State  \qquad  \quad 
$u^{k+0.5+ l/2{L}} =  \prox_{\lambda_k (R_\alpha)_l}( u^{k+0.5+ (l-1)/2{L}})$
\State \qquad  $k\gets k+1$
\State \Return{$x^k$}
\end{algorithmic}
\caption{FB-splitting for solving $\min_u \mathcal D(u) + R_\alpha(u)$
	using a trajectory method\label{alg:fb_splitting_trajectory}}
\end{algorithm}

We point out that also a stochastic variant of this scheme where the atoms are choosen in a random order has been proposed  in \cite{storath2018variational}. Finally, we point out that it is also possible to employ a CPPA or a PPPA as explained in Section~\ref{sec:TV}. 
This is in particular important if the data term is not differentiable, i.e., if $q=1.$ 
For details on computing the proximal mappings of the atoms $\mathcal D_{i}$ we refer to 
the paper \cite{storath2018variational}.

We illustrate the results of joint deconvolution and denoising of manifold-valued data in Figure~\ref{fig:S1_img}. The data consists of an $\mathbb S^1$-valued image convolved with a Gaussian kernel and corrupted by von Mises noise. We employ $\STGVat$ regularized deconvolution and observe good denoising and deblurring capabilities.

\begin{figure}[!tp]
	
	\def\figfolderA{experiments/inv/}
	\def\hs{\hspace{0.03\textwidth}}
	\def\vs{\vspace{0.03\textwidth}}
	\def\figurewidth{0.28\textwidth}
	\centering
	\includegraphics[width=\figurewidth,height=\figurewidth]{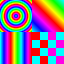} \hs
	\includegraphics[width=\figurewidth,height=\figurewidth]{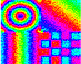}\hs 
	\includegraphics[width=\figurewidth,height=\figurewidth]{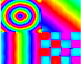} \\[2ex]

	\caption{
		Deconvoling an $\mathbb S^1$-valued image (visualized as hue values.)
		 As input data ({\em center}) we use the ground truth ({\em left}) convolved with a Gaussian kernel ($5\times 5$ kernel with $\sigma = 1$) and corrupted by von Mises noise.
		 We observe the denoising and deblurring capabilities of TGV regularized deconvolution ({\em right}.)
	}
	\label{fig:S1_img}
\end{figure}

\section{Wavelet Sparse Regularization of Manifold Valued Data}
\label{sec:WavSparse}

In contrast to TV, higher order TV type and Mumford-Shah regularizers which are all based on differences (or derivatives in the continuous setting), we here consider a variational scheme employing manifold valued interpolatory wavelets in the regularizing term. In particular, we consider a sparsity promoting $\ell^1$ type term as well as an $\ell^0$ type term.  
We obtain results on the existence of minimizers for the proposed models.
We provide algorithms for the proposed models and show the potential of the proposed algorithms.

Interpolatory wavelet transforms for linear space data have been investigated by D.~Donoho in \cite{donoho1992interpolating}.
Their analogues for manifold-valued data have been introduced by Ur Rahman, Donoho and their coworkers in \cite{rahman2005multiscale}. 
Such transforms have been analyzed and developed further in \cite{grohs2009interpolatory,grohs2010stability,weinmann2012interpolatory}. 
Typically, the wavelet-type transforms employ an (interpolatory) subdivision scheme
to predict the signal on a finer scale. 
The `difference' between the prediction and the actual data on the finer scale 
is realized by vectors living in the tangent spaces of the predicted signal points which point to the actual signal values, 
i.e., they yield actual signal values after application of a retraction such as the exponential map. 
These tangent vectors then serve as detail coefficients.
Subdivision schemes for manifold-valued data have been considered in  
\cite{wallner2005convergence, grohs2008smoothness,xie2008smoothness,weinmannConstrApprox, wallner2011convergence}.
Interpolatory wavelet transforms and subdivision are discussed in more detail in \cite{wallner2019geometric_mh}. 
All the above approaches consider explicit schemes, i.e., the measured data is processed in a forward way using the analogues of averaging rules and differences in the manifold setting. 
In contrast, we here consider an implicit approach based on a variational formulation.

\subsection{Model}
\label{sec:WavSparsemodel}

We discuss a model for wavelet sparse regularization for manifold-valued data.
For the reader's convenience, we consider the univariate situation here. For the multivariate setup and further details we refer to \cite{storath2018wavelet}.
Let $f \in \mathcal M^K$ be data in the complete, finite dimensional Riemannian manifold $\mathcal M.$ 
We consider the problem 
\begin{align}\label{eq:VarProblemIntro}
\argmin_{u \in \mathcal M^N }  \  \frac1q\dist(\mathcal A(u),f)^q + \mathcal W_\alpha^{\mu,p} (u).
\end{align}
Here, $u$ denotes the argument to optimize for; 
it may be thought of as the underlying signal 
generating the response $\mathcal A(u)\in \Mc^K$,
where $\mathcal A $ is an operator which models a system's response, for instance.
In case of pure denoising, $\mathcal A$ is the identity on $\mathcal M^N,$ $N=K.$
Further instances of $\mathcal A$ are the manifold valued analogues of convolution operators
as pointed out in Section~\ref{sec:InvProb}.  
The deviation of $\mathcal A(u)$ 
from $f$ is quantified by $\frac1q\dist(\mathcal A(u),f)^q = \frac1q\sum_{i=1}^{K}\dist(\mathcal A(u)_i,f_i)^q.$
Further,  $\alpha = (\alpha_1,\alpha_2)$ is a parameter vector regulating the trade-off between the data fidelity, 
and the regularizing term $\mathcal W_\alpha^{\mu,p}$ which is the central topic of this section
and is given by  
\begin{equation}\label{eq:DefWavRegMult}
\mathcal W_\alpha^{\mu,p}(u) = 	
\alpha_1 \cdot  \sum_{n,r} 2^{r p \left(\mu+\tfrac{1}{2}-\tfrac{1}{p} \right)}\| d_{n,r}(u) \|^p_{\hat u_{n,r}} 
+ \alpha_2 \cdot \sum_n \dist(\tilde u_{n-1,0},\tilde u_{n,0})^p.
\end{equation}
We discuss the regularizing term \eqref{eq:DefWavRegMult} in more detail in the following.
We start with the so-called detail coefficients  $d_{n,r}$ which requires some space.
The details $d_{n,r}$ at scale $r$ of the interpolatory wavelet transform for manifold valued data are given by 
\begin{equation}\label{eq:DetailInterpolMani}
d_{n,r} =  d_{n,r}(u) =  2^{-r/2}\left( \tilde u_{n,r}  \ominus  \hat u_{n,r} \right),
\qquad \hat u_{n,r} = \mathrm{S} \tilde u_{n,r-1}.  
\end{equation}
Here $\tilde u_{n,r-1} = u_{2^{R-r+1}n}$  and $\tilde u_{n,r} = u_{2^{R-r}n}$  (with $R$ the finest level)  
denote the thinned out target $u$ at scale $r-1$ and $r,$ respectively.
The coarsest level  is denoted by  $\tilde u_{n,0} = u_{2^{R}n}.$ 
The symbol $\ominus$ takes the Riemannian logarithm of the first argument w.r.t.\ the second argument as base point.
$\mathrm{S} \tilde u_{n,r-1}$ denotes the application of an interpolatory subdivision scheme $\mathrm S$ for manifold-valued data  	 
to the coarse level data $\tilde u_{\cdot,r-1}$ evaluated at the index $n$ which serves as prediction for $\tilde u_{n,r},$ i.e.,
\begin{align}
    \mathrm{S} \tilde u_{n,r-1}  =  \mean(s_{n-2 \ \cdot} \ ,\tilde u_{\cdot,r-1}).
\end{align}
Here the mask $s$ of the subdivision scheme $S$ is a real-valued sequence such that the even as well as the odd entries sum up to $1.$
The even and the odd entries yield two sets of weights; in case of an interpolatory scheme $s_0=1$ and all other even weights equal zero. The simplest example of an interpolatory scheme is the linear interpolatory scheme for which $s_{-1}=s_{1} = 1/2$ and the other odd weights equal zero. 
Thus, in the manifold setup, the prediction of the linear interpolatory consists of the geodesic midpoint between two consecutive coarse level items. 
The linear interpolatory scheme is a particular example of the interpolatory Deslaurier-Dubuc schemes
whose third order variant is given by the coefficients $s_{-3}=s_{3} = -1/16$ as well as $s_{-1}=s_{1} = 9/16$
with the remaining odd coefficients equal to zero. 
A reference on linear subdivision schemes is the book \cite{cavaretta1991stationary}; 
for manifold-valued schemes we refer to references above.

Coming back to \eqref{eq:DetailInterpolMani}, the detail $d_{n,r}$ quantifies 
the deviation between the prediction $\mathrm{S} \tilde u_{n,r-1}$ 
and the actual $r$th level data item $\tilde u_{n,r}$ 
by  
\[
  d_{n,r} = \tilde u_{n,r} \ominus \mathrm{S} \tilde u_{n,r-1} = \exp^{-1}_{\mathrm{S} \tilde u_{n,r-1}} \tilde u_{n,r}
\] 
which denotes the tangent vector sitting in $\hat u_{n,r} = \mathrm{S} \tilde u_{n,r-1}$ 
pointing to $\tilde u_{n,r}.$ 

With this information on the details $d_{n,r},$ we come back to the definition of the regularizer in \eqref{eq:DefWavRegMult}.
We observe that the symbol 
$\| \cdot \|_{\hat u_{n,r}}$ denotes the norm 
induced by the Riemannian scalar product in the point 
$\hat u_{n,r},$ which is the point where the detail $d_{n,r}(u)$
is a tangent vector at; it measures the size of the detail.
The parameter $\mu$ is a smoothness parameter and the parameter $p\geq 1$
stems from a norm type term. 
The second term addresses measures the $p$th power of the distance between neighboring items on the coarsest scale.  
 
We emphasize that the case $p=1,\mu=1$ in \eqref{eq:VarProblemIntro},
corresponds to  the manifold analogue of the LASSO \cite{tibshirani1996regression, chambolle1998nonlinear}  or $\ell^1$-sparse regularization which, in the linear case, is addressed by (iterative) soft thresholding \cite{donoho1995noising}. This case is particularly interesting since it promotes solutions $u$ which are likely to be sparse w.r.t.\ the considered wavelet expansion.

The manifold analogue of $\ell^0$-sparse regularization which actually measures sparsity is obtained by using the regularizer
\begin{equation}\label{eq:Defl0SparseMult}
\mathcal W_\alpha^0(u) = 	\alpha_1  \ \#   \{(n,r) \ : \ d_{n,r}(u) \neq 0\}
\ + \ \alpha_2 \ \#  \{n \ : \ \tilde u_{n-1,0} \neq \tilde u_{n,0}  \}.
\end{equation}
The operator $\#$ is used to count the number of elements in the corresponding set. 
Note that this way the number of non-zero detail coefficients 
of the wavelet expansion is penalized.
Similar to the linear case 
\cite{weaver1991filtering,donoho1995noising,chambolle1998nonlinear},
potential applications of the considered sparse regularization techniques are denoising and compression.

Concerning the existence of minimizers, we have the following results.
\begin{theorem}\label{thm:ExistenceStandardProbLambda2neq0}
	The variational problem \eqref{eq:VarProblemIntro} 
	of wavelet regularization using the regularizers 
	$\mathcal W_\alpha^{\mu,p}$ of \eqref{eq:DefWavRegMult} with $\alpha_2 \neq 0$ has a minimizer.
\end{theorem}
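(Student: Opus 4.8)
The plan is to argue by the direct method of the calculus of variations, in the same spirit as the proof of Proposition~\ref{prop:tv2_tgv_well_posed}. First I would observe that the functional $\mathcal F(u) = \frac1q\dist(\mathcal A(u),f)^q + \mathcal W_\alpha^{\mu,p}(u)$ is proper: for a constant signal $u \equiv c$ one has $\mathcal A(u) = u$ (weighted Riemannian centers of mass of copies of one point are that point, since the rows of $\mathcal A$ sum to one), all details $d_{n,r}(u)$ vanish and the coarsest-scale term vanishes, so $\mathcal F(u) = \frac1q\sum_i \dist(c,f_i)^q < \infty$. Hence a minimizing sequence $(u^k)_k$ with $\mathcal F(u^k)\le C$ exists. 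From this bound I extract, on the one hand, that $\alpha_2\sum_n \dist(\tilde u^k_{n-1,0},\tilde u^k_{n,0})^p \le C$ and that each of the finitely many details satisfies $\|d_{n,r}(u^k)\|^p_{\hat u^k_{n,r}} \le C\,2^{-rp(\mu+1/2-1/p)}/\alpha_1$, uniformly in $k$; and, on the other hand, that $\dist(\mathcal A(u^k)_i,f_i)$ is bounded uniformly in $k$ and $i$.

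The first main step is compactness. The uniform bound on the coarsest-scale increments gives, by the triangle inequality, a uniform bound on $\diam(\tilde u^k_{\cdot,0})$. Then I proceed inductively over the scales $r = 1,\dots,R$: the interpolatory subdivision operator $\mathrm{S}$ maps point configurations of bounded diameter to configurations whose diameter is bounded by a mask-dependent multiple thereof, so $\diam(\hat u^k_{\cdot,r})$ is bounded; since $\tilde u^k_{n,r} = \exp_{\hat u^k_{n,r}}(2^{r/2}d_{n,r}(u^k))$ and the detail vectors are bounded, $\tilde u^k_{\cdot,r}$ again has bounded diameter. At $r = R$ this yields that $\diam(u^k)$ is bounded uniformly in $k$. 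Combining the diameter bound with the bound on the data term and the fact that an operator with unit row sums displaces points by at most a fixed multiple of the diameter of its argument, I conclude that $(u^k)_k$ lies in a fixed bounded subset of $\mathcal M^N$; by completeness of $\mathcal M$ and the Hopf--Rinow theorem this set is relatively compact, so after passing to a subsequence $u^k \to u^\ast$ in $\mathcal M^N$.

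The second main step is lower semicontinuity of $\mathcal F$ at $u^\ast$. Since the predictions $\hat u_{n,r}$, the inverse exponentials defining the details $d_{n,r}$, and the centers of mass defining $\mathcal A(u)$ are in general only set-valued, I read $\mathcal F$ as the infimum over all admissible selections of these auxiliary quantities. Along the convergent subsequence I pass to a further subsequence along which all these selections converge --- this is possible because the defining conditions (being a weighted Riemannian center of mass, i.e.\ a minimizer of the associated functional; lying on a length-minimizing geodesic between two converging endpoints) are closed under limits, cf.\ \cite[Lemma 3.3]{holler18tgvm_mh} and the argument in Proposition~\ref{prop:tv2_tgv_well_posed}. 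Continuity of $\dist(\cdot,\cdot)$ and of the Riemannian norms in the base point then gives admissible auxiliary objects for $u^\ast$ with $\mathcal F(u^\ast) \le \liminf_k \mathcal F(u^k) = \inf\mathcal F$, so $u^\ast$ is a minimizer.

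The assumption $\alpha_2 \neq 0$ enters precisely in the compactness step: without the coarsest-scale distance term the regularizer vanishes on every geodesically interpolated signal whose coarsest level is spread arbitrarily far apart, so the diameter of a minimizing sequence could not be controlled. The main obstacle is therefore the diameter estimate propagated through the scales, in particular the claim that $\mathrm{S}$ does not increase diameters by more than a fixed factor. For masks with negative entries (such as the higher-order Deslaurier--Dubuc schemes) the associated weighted center of mass need not be a global minimizer, and one instead uses that $\mathrm{S}$ is given locally by a smooth geodesic-averaging rule and is hence Lipschitz on bounded regions, cf.\ \cite{wallner2005convergence,grohs2009interpolatory,weinmann2012interpolatory}; the detailed verification is carried out in \cite{storath2018wavelet}.
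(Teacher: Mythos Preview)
Your proposal is correct and follows essentially the same route as the paper indicates: the paper states that the proof rests on a coercivity-type condition for the regularizer together with lower semicontinuity (in the spirit of Theorem~\ref{thm:ExistenceCondR}) and defers details to \cite{storath2018wavelet}. Your argument is precisely this made explicit --- you show that bounded $\mathcal W_\alpha^{\mu,p}$ forces bounded diameter by propagating the coarsest-scale bound (where $\alpha_2\neq 0$ enters) up through the scales, anchor the minimizing sequence via the data term, and verify lower semicontinuity by extracting convergent selections of the set-valued auxiliary objects, just as in Proposition~\ref{prop:tv2_tgv_well_posed}.
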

Similar to the existence results in Section~\ref{sec:InvProb} these results are based on showing lower semicontinuity and a coercivity type condition in the manifold setting. 
To ensure a coercivity type condition when $\alpha_2 = 0$ we need to impose additional conditions on $\mathcal A.$ For a precise discussion of this point we refer to \cite{storath2018wavelet}. 
As in Section~\ref{sec:InvProb} we here state a special case which is easier to access. 
\begin{theorem}\label{thm:ExistenceMinimizersWavRegCompact}
	Let $\mathcal M$ be a compact manifold, 
	or assume that $\mathcal A$ fulfills the coercivity type condition 
	\eqref{eq:coercData}. 
	The variational problem \eqref{eq:VarProblemIntro} 
	of wavelet regularization using the regularizers 
	$\mathcal W_\alpha^{\mu,p}$ of \eqref{eq:DefWavRegMult} with $\alpha_2 = 0$ has a minimizer.
\end{theorem}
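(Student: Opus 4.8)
The plan is to prove existence by the direct method of the calculus of variations, following the scheme used for the analogous results in Section~\ref{sec:InvProb}. Write
\[ F(u) = \tfrac1q\dist(\Ac(u),f)^q + \mathcal{W}_\alpha^{\mu,p}(u), \qquad \alpha_2 = 0, \]
which is nonnegative and finite at any $u$, and let $(u^k)_k \subset \Mc^N$ be a minimizing sequence. First I would secure a convergent subsequence. If $\Mc$ is compact this is immediate, since $\Mc^N$ is then compact. If instead $\Ac$ satisfies the coercivity condition \eqref{eq:coercData}, then, because $\mathcal{W}_\alpha^{\mu,p}\geq 0$, boundedness of $F(u^k)$ forces the data term $\dist(\Ac(u^k),f)^q$ to stay bounded, hence by \eqref{eq:coercData} the quantity $\diam(u^k)$ stays bounded; the Hopf--Rinow theorem then confines the $u^k$ to a fixed compact subset of $\Mc^N$, so again a subsequence $u^{k_j}\to u^\ast$ exists.

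The remaining task is lower semicontinuity of $F$ with respect to convergence in $(\Mc,\dist)^N$. For the data term $u\mapsto\dist(\Ac(u),f)^q$ this is exactly as in Section~\ref{sec:InvProb}: $\Ac$ is built from weighted Riemannian centers of mass (the identity in the pure-denoising case), which depend continuously on their arguments where the minimizer is unique and are upper semicontinuous as set-valued maps in general, so the data term is lower semicontinuous. For the regularizer with $\alpha_2=0$ only the detail term survives, and here the key device is that the Riemannian norm of a detail coefficient is a Riemannian distance: by \eqref{eq:DetailInterpolMani},
\[ \|d_{n,r}(u)\|_{\hat u_{n,r}} = 2^{-r/2}\bigl\|\log_{\hat u_{n,r}}\tilde u_{n,r}\bigr\|_{\hat u_{n,r}} = 2^{-r/2}\dist\bigl(\hat u_{n,r},\tilde u_{n,r}\bigr). \]
Now $\tilde u_{n,r} = u_{2^{R-r}n}$ is just a coordinate of $u$, hence continuous in $u$, and $\hat u_{n,r} = \mathrm{S}\tilde u_{n,r-1} = \mean(s_{n-2\,\cdot},\tilde u_{\cdot,r-1})$ is a composition of weighted-mean operations, hence lower semicontinuous in $u$ by the same center-of-mass argument. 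Since the signal has finite length and there are finitely many scales, $\mathcal{W}_\alpha^{\mu,p}$ is a finite sum of nonnegative weights times $p$-th powers ($p\geq 1$) of such maps, hence lower semicontinuous. Adding the two pieces yields $F(u^\ast)\leq\liminf_j F(u^{k_j}) = \inf F$, so $u^\ast$ is a minimizer.

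The hard part will be the careful treatment of the non-uniqueness of Riemannian centers of mass (and of minimizing geodesics) on a general manifold, which makes both $\Ac$ and the subdivision operator $\mathrm{S}$ genuinely set-valued, so that $F$ must itself be read as a set-valued object and ``minimizer'' understood accordingly. One must either restrict to point configurations lying in a common geodesic ball of radius below the convexity radius — where every center of mass and midpoint is unique and smooth in its data and the argument above is literally valid — or work with the set-valued maps and establish closedness of their graphs, i.e.\ that a limit of weighted centers of mass of converging configurations is again a weighted center of mass of the limit configuration. The latter is the technical core of the proof in \cite{storath2018wavelet} and is what makes the compact case rigorous without any localization; note that the rewriting $\|d_{n,r}(u)\|_{\hat u_{n,r}} = 2^{-r/2}\dist(\hat u_{n,r},\tilde u_{n,r})$ is precisely the trick that keeps this analysis tractable, since $\dist$ is globally continuous whereas $\log$ is not.
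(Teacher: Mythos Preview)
Your overall strategy --- the direct method via compactness of a minimizing sequence plus lower semicontinuity, with the key rewriting $\|d_{n,r}(u)\|_{\hat u_{n,r}}=2^{-r/2}\dist(\hat u_{n,r},\tilde u_{n,r})$ to reduce lsc of the regularizer to graph-closedness of the weighted-mean map --- is exactly the route the paper indicates (it states that the result is proved in \cite{storath2018wavelet} by ``showing lower semicontinuity and a coercivity type condition in the manifold setting'' and that the present theorem is a special case of \cite[Theorem~4]{storath2018wavelet}).

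There is one genuine slip in your compactness step for the non-compact case. From boundedness of the data term together with \eqref{eq:coercData} you correctly deduce that $\diam(u^k)$ stays bounded, but you then assert that Hopf--Rinow confines $(u^k)_k$ to a compact set. Bounded diameter alone does not do this: a configuration can have diameter zero and still escape to infinity (take $u^k=(k,k,\dots,k)$ in $\Mc=\RR$). What pins the configuration down is again the data term: boundedness of $\dist(\mean(\Ac_{i,\cdot},u^k),f_i)$ keeps the weighted centers $\mean(\Ac_{i,\cdot},u^k)$ in a fixed bounded set, and with $\diam(u^k)\leq D$ one then argues (e.g.\ in the Euclidean case via $\mean(\Ac_{i,\cdot},u)=u_1+\sum_j\Ac_{i,j}(u_j-u_1)$, and in the manifold case by an analogous estimate for the center of mass on a ball of radius $D$) that each $u_j^k$ lies at distance $\lesssim D$ from that center, hence in a fixed bounded set. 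Only then does Hopf--Rinow apply. This anchoring step is part of what is carried out in \cite{storath2018variational,storath2018wavelet}; you should make it explicit rather than fold it into the invocation of Hopf--Rinow.

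A second, cosmetic point: calling the map $u\mapsto\hat u_{n,r}$ ``lower semicontinuous'' is a category error (it is manifold-valued). You clearly know this, since your final paragraph correctly reformulates the issue as closedness of the graph of the set-valued center-of-mass map; just avoid the earlier phrasing.
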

\begin{theorem}\label{thm:ExistenceL0Sparse}	
	We make the same assumptions as in Theorem~\ref{thm:ExistenceMinimizersWavRegCompact}.  
	Then  wavelet sparse regularization using the $\ell^0$ type regularizing terms $\mathcal W_\alpha^0(u)$ of \eqref{eq:Defl0SparseMult} has a minimizer. 
\end{theorem}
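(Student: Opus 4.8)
The plan is to run the direct method of the calculus of variations, reusing essentially verbatim the structure of the proof of Theorem~\ref{thm:ExistenceMinimizersWavRegCompact} and only replacing the lower-semicontinuity argument for $\mathcal W_\alpha^{\mu,p}$ by one for the $\ell^0$-type functional $\mathcal W_\alpha^0$. Denote by $\mathcal F$ the objective in \eqref{eq:VarProblemIntro} with regularizer $\mathcal W_\alpha^0$ and pick a minimizing sequence $(u^k)_k$ in $\mathcal M^N$, so that $\sup_k \mathcal F(u^k)<\infty$.

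First I would extract a convergent subsequence. Since $\mathcal W_\alpha^0\geq 0$, the bound on $\mathcal F(u^k)$ gives a uniform bound on the data term $\tfrac1q\dist(\mathcal A(u^k),f)^q$; the $\ell^0$ regularizer is itself bounded and hence contributes no coercivity, which is precisely why an assumption on $\mathcal M$ or on $\mathcal A$ is needed. Exactly as in the proof of Theorem~\ref{thm:ExistenceMinimizersWavRegCompact} one then argues: if $\mathcal M$ is compact, $\mathcal M^N$ is compact and a convergent subsequence exists trivially; if instead $\mathcal A$ satisfies \eqref{eq:coercData}, the uniform bound on the data term forces $\diam(u^k)$ to remain bounded, and since the data term would blow up if the configuration escaped to infinity at bounded diameter, $(u^k)_k$ stays in a compact subset of $\mathcal M^N$ by the Hopf--Rinow theorem. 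Passing to a (non-relabelled) subsequence, $u^k\to u^\ast$ for some $u^\ast\in\mathcal M^N$.

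The heart of the argument is the lower semicontinuity of $\mathcal F$ at $u^\ast$. For the data term this is the same statement used in Section~\ref{sec:InvProb} and in \cite{storath2018wavelet}: continuity of $\dist$ together with the (possibly set-valued) dependence of $u\mapsto\mean(\mathcal A_{i,\cdot},u)$ on $u$ yields lower semicontinuity of $u\mapsto\tfrac1q\dist(\mathcal A(u),f)^q$. For the regularizer, the key observation is that $\mathcal W_\alpha^0$ is a \emph{finite} sum (there are only finitely many detail indices $(n,r)$ and coarse indices $n$, since $N$ is finite) of nonnegative multiples of indicator functions of \emph{open} sets: the prediction $\hat u_{n,r}(u)=\mathrm S\tilde u_{n,r-1}$ and the thinned-out values $\tilde u_{n,r}(u)$, $\tilde u_{n,0}(u)$ depend continuously on $u$ because subdivision is built from geodesic averages, so the ``disagreement sets'' $\{u:\hat u_{n,r}(u)\neq\tilde u_{n,r}(u)\}$ and $\{u:\tilde u_{n-1,0}(u)\neq\tilde u_{n,0}(u)\}$ are open. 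An indicator function of an open set is lower semicontinuous, and a finite nonnegative combination of lower semicontinuous functions is lower semicontinuous, so $\liminf_k\mathcal W_\alpha^0(u^k)\geq\mathcal W_\alpha^0(u^\ast)$. Adding the data-term estimate gives $\mathcal F(u^\ast)\leq\liminf_k\mathcal F(u^k)=\inf\mathcal F$, so $u^\ast$ is the desired minimizer.

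The step I expect to require the most care is the bookkeeping around non-uniqueness on general manifolds: the geodesic averages defining $\mathrm S$ (hence the predictions $\hat u_{n,r}$) need not be single-valued, and the logarithm defining $d_{n,r}$ is defined only off the cut locus. As elsewhere in the chapter this is handled by reading the relevant objects as set-valued and taking infima in \eqref{eq:Defl0SparseMult}; one then has to re-verify that the disagreement sets are still open (equivalently, that the agreement sets are closed) under this set-valued interpretation, which comes down to closedness of the graphs of the geodesic-averaging maps on the compact set containing the minimizing sequence. Granting this technical point, the proof is exactly the direct method and parallels the proofs of Theorems~\ref{thm:ExistenceStandardProbLambda2neq0} and~\ref{thm:ExistenceMinimizersWavRegCompact}.
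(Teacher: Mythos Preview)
Your approach is correct and is the standard direct-method argument one would expect here. The paper itself does not give a self-contained proof of this theorem but simply refers to \cite{storath2018wavelet}, so a line-by-line comparison is not possible; however, your argument follows exactly the template the chapter uses for the companion results (Theorems~\ref{thm:ExistenceStandardProbLambda2neq0} and~\ref{thm:ExistenceMinimizersWavRegCompact}), and the only new ingredient---lower semicontinuity of the $\ell^0$ term via openness of the finitely many disagreement sets---is handled correctly. You have also correctly isolated the one genuinely delicate point, namely the set-valuedness of the prediction operator on general manifolds, and your proposed fix (closedness of the graph of the geodesic-averaging map on the relevant compact set, as in the proof of Proposition~\ref{prop:tv2_tgv_well_posed}) is the right one.
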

For proofs of these theorems  
(whereby Theorem ~\ref{thm:ExistenceMinimizersWavRegCompact} is a special case of \cite[Theorem 4]{storath2018wavelet}) we refer to \cite{storath2018wavelet}.

\subsection{Algorithmic Realization}
\label{sec:WavSparsealgo}

We decompose the regularizer $W_\alpha^{\mu,p}$ into atoms $\mathcal R_k$ with a enumerating index $k$
by
\begin{equation}\label{eq:DefWavRegMultDecon}
\mathcal R_k = 	
\alpha_1 \sum_{n,r} 2^{r p \left(\mu+\tfrac{1}{2}-\tfrac{1}{p} \right)}\| d_{n,r}(u) \|^p_{\hat u_{n,r}}, \qquad \text{ or } \quad
\mathcal R_k = \alpha_2   \dist(\tilde u_{n-1,0},\tilde u_{n,0})^p,
\end{equation}
and the data term into atoms $\mathcal D_k$ according to \eqref{eq:DecomposeDataReg}.
To these atoms  we may apply the concepts of a generalized forward backward-scheme with Gauss-Seidel type update and a trajectory method (explained in Section~\ref{sec:InvProb})
as well as the concept of a CPPA or a PPPA (explained in Section~\ref{sec:TV}).
To implement these schemes expressions for 
the (sub)gradients and proximal mappings of the atoms $\mathcal R_k$ based on Jacobi fields
have been derived in \cite{storath2018wavelet}. Due to space reasons, we do not elaborate on this derivation here, but refer to the mentioned paper for details.  
Similar to \eqref{eq:DefWavRegMultDecon}, we may decompose the $\ell^0$-sparse regularizer $\mathcal W_\alpha^0$ into atoms we also denote by $\mathcal D_k,$ and apply a CPPA or PPPA. For details we refer to \cite{storath2018wavelet}. 
We illustrate $\ell^1$ wavelet regularization 
by considering a joint deblurring and denoising problem for an $\mathbb S^2$-valued time series in Figure~\ref{fig:l1}. The noisy data is convolved with the manifold-valued analogue of a discrete Gaussian kernel. As prediction operator we employ the linear interpolatory subdivision scheme which inserts the geodesic midpoint as well as the cubic Deslaurier Dubuc scheme for manifold valued data as explained above.

\begin{figure}[!tp]
	\def\figfolderB{experiments/wav/}
	\def\hs{\hfill}
	\def\vs{\vspace{0.03\textwidth}}
	\def\figurewidth{0.3\textwidth}
	\def\figurewidthB{0.30\textwidth}
	\centering
	{
		\footnotesize
		\begin{tabular}{ccc}

		\includegraphics[width=\figurewidthB, angle = 90]{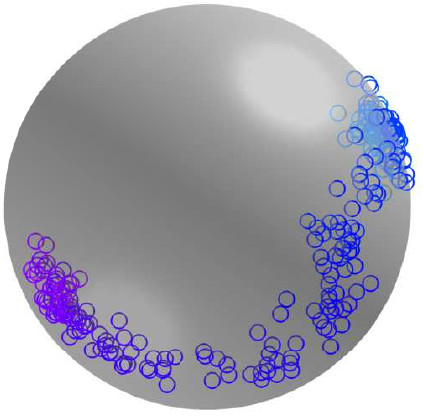}  &
		\hs\includegraphics[width=\figurewidthB, angle = 90]{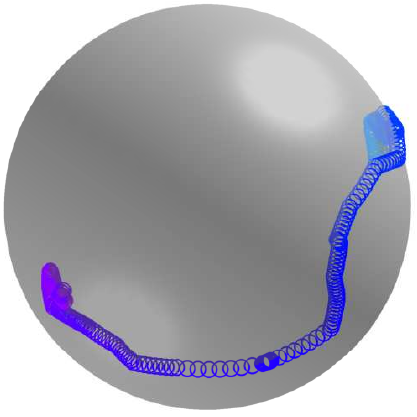}\hs&
	 	\includegraphics[width=\figurewidthB, angle = 90]{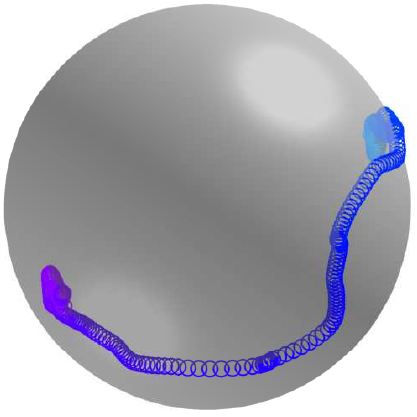}  
			\\
		\end{tabular}
	}
	\caption{
		Illustration of the proposed $\ell^1$ wavelet regularization 
		for a $\mathbb S^2$-valued time series. The given data ({\em left}) is noisy and blurred with the manifold analogue of a Gaussian kernel with $\sigma=2.$ We display the result of using 
		the first order interpolatory wavelet ({\em middle}) 
		and the third order Deslaurier-Dubuc (DD) wavelet ({\em right}).}
	\label{fig:l1}
\end{figure}

\section*{Acknowledgements}
MH acknowledges support by the Austrian Science Fund (FWF) (Grant J 4112). AW acknowledges support by the DFG Grants WE 5886/3-1 and WE 5886/4-1.

\bibliographystyle{abbrv}
\bibliography{weinmann,mh_lit_dat}

\begin{thebibliography}{100}

\bibitem{afsari2013convergence}
B.~Afsari, R.~Tron, and R.~Vidal.
\newblock On the convergence of gradient descent for finding the {R}iemannian
  center of mass.
\newblock {\em SIAM Journal on Control and Optimization}, 51(3):2230--2260,
  2013.

\bibitem{alexeev2010complexity}
B.~Alexeev and R.~Ward.
\newblock On the complexity of {M}umford--{S}hah-type regularization, viewed as
  a relaxed sparsity constraint.
\newblock {\em IEEE Transactions on Image Processing}, 19(10):2787--2789, 2010.

\bibitem{alliney1992digital}
S.~Alliney.
\newblock Digital filters as absolute norm regularizers.
\newblock {\em IEEE Transactions on Signal Processing}, 40:1548--1562, 1992.

\bibitem{ambrosio1990approximation}
L.~Ambrosio and V.~M. Tortorelli.
\newblock Approximation of functional depending on jumps by elliptic functional
  via {$\Gamma$}-convergence.
\newblock {\em Communications on Pure and Applied Mathematics},
  43(8):999--1036, 1990.

\bibitem{arnaudon2013approximating}
M.~Arnaudon and F.~Nielsen.
\newblock On approximating the {R}iemannian 1-center.
\newblock {\em Computational Geometry}, 46(1):93--104, 2013.

\bibitem{azagra2005proximal}
D.~Azagra and J.~Ferrera.
\newblock Proximal calculus on {R}iemannian manifolds.
\newblock {\em Mediterranean Journal of Mathematics}, 2:437--450, 2005.

\bibitem{bavcak2013computing}
M.~Ba{\v{c}}{\'a}k.
\newblock Computing medians and means in {H}adamard spaces.
\newblock {\em SIAM Journal on Optimization}, 24(3):1542--1566, 2014.

\bibitem{bacak2014convex}
M.~Ba{\v{c}}{\'a}k.
\newblock {\em Convex analysis and optimization in Hadamard spaces}.
\newblock de Gruyter, 2014.

\bibitem{Bavcak2016tv2_manifold_mh}
M.~Ba{\v{c}}{\'a}k, R.~Bergmann, G.~Steidl, and A.~Weinmann.
\newblock A second order non-smooth variational model for restoring
  manifold-valued images.
\newblock {\em SIAM Journal on Scientific Computing}, 38(1):A567--A597, 2016.

\bibitem{basser1994mr}
P.~Basser, J.~Mattiello, and D.~LeBihan.
\newblock {M}{R} diffusion tensor spectroscopy and imaging.
\newblock {\em Biophysical Journal}, 66(1):259--267, 1994.

\bibitem{baust2015total}
M.~Baust, L.~Demaret, M.~Storath, N.~Navab, and A.~Weinmann.
\newblock Total variation regularization of shape signals.
\newblock In {\em IEEE Conference on Computer Vision and Pattern Recognition
  (CVPR)}, pages 2075--2083, 2015.

\bibitem{baust2016combined}
M.~Baust, A.~Weinmann, M.~Wieczorek, T.~Lasser, M.~Storath, and N.~Navab.
\newblock Combined tensor fitting and {TV} regularization in diffusion tensor
  imaging based on a {R}iemannian manifold approach.
\newblock {\em IEEE Transactions on Medical Imaging}, 35(8):1972--1989, 2016.

\bibitem{steidl16half_quadratic}
R.~Bergmann, R.~H. Chan, R.~Hielscher, J.~Persch, and G.~Steidl.
\newblock Restoration of manifold-valued images by half-quadratic minimization.
\newblock {\em Inverse Problems and Imaging}, 10:281–304, 2016.

\bibitem{Steidl17_infcon_manifold_mh}
R.~Bergmann, J.~H. Fitschen, J.~Persch, and G.~Steidl.
\newblock Infimal convolution type coupling of first and second order
  differences on manifold-valued images.
\newblock {\em Scale Space and Variational Methods in Computer Vision 2017},
  pages 447--459, 2017.

\bibitem{bergmann2018ictv_tgv_mh}
R.~Bergmann, J.~H. Fitschen, J.~Persch, and G.~Steidl.
\newblock Priors with coupled first and second order differences for
  manifold-valued image processing.
\newblock {\em Journal of mathematical imaging and vision}, 60(9):1459--1481,
  2018.

\bibitem{berkels2013discrete}
B.~Berkels, P.~Fletcher, B.~Heeren, M.~Rumpf, and B.~Wirth.
\newblock Discrete geodesic regression in shape space.
\newblock In {\em International Workshop on Energy Minimization Methods in
  Computer Vision and Pattern Recognition}, pages 108--122. Springer, 2013.

\bibitem{bertero1998introduction}
M.~Bertero and P.~Boccacci.
\newblock {\em Introduction to inverse problems in imaging}.
\newblock CRC press, 1998.

\bibitem{bertsekas1976multiplier}
D.~Bertsekas.
\newblock Multiplier methods: a survey.
\newblock {\em Automatica}, 12(2):133--145, 1976.

\bibitem{Bertsekas2011in}
D.~Bertsekas.
\newblock Incremental proximal methods for large scale convex optimization.
\newblock {\em Mathematical Programming}, 129:163--195, 2011.

\bibitem{blake1987visual}
A.~Blake and A.~Zisserman.
\newblock {\em Visual reconstruction}.
\newblock MIT press Cambridge, 1987.

\bibitem{Boyer19representer_mh}
C.~Boyer, A.~Chambolle, Y.~D. Castro, V.~Duval, F.~De~Gournay, and P.~Weiss.
\newblock On representer theorems and convex regularization.
\newblock {\em SIAM Journal on Optimization}, 29(2):1260--1281, 2019.

\bibitem{boykov2001fast}
Y.~Boykov, O.~Veksler, and R.~Zabih.
\newblock Fast approximate energy minimization via graph cuts.
\newblock {\em IEEE Transactions on Pattern Analysis and Machine Intelligence},
  23(11):1222--1239, 2001.

\bibitem{boysen2009consistencies}
L.~Boysen, A.~Kempe, V.~Liebscher, A.~Munk, and O.~Wittich.
\newblock Consistencies and rates of convergence of jump-penalized least
  squares estimators.
\newblock {\em The Annals of Statistics}, 37(1):157--183, 2009.

\bibitem{Carioni18sparsity_mh}
K.~Bredies and M.~Carioni.
\newblock Sparsity of solutions for variational inverse problems with
  finite-dimensional data.
\newblock {\em arXiv preprint arXiv:1809.05045}, 2018.

\bibitem{holler14inversetgv_mh}
K.~Bredies and M.~Holler.
\newblock {Regularization of linear inverse problems with total generalized
  variation}.
\newblock {\em Journal of Inverse and Ill-Posed Problems}, 22(6):871--913,
  2014.

\bibitem{bredies2018Observation}
K.~Bredies, M.~Holler, M.~Storath, and A.~Weinmann.
\newblock An observation concerning the parallel transport variant of total
  generalized variation for manifold-valued data.
\newblock {\em Oberwolfach Report}, 20:38--41, 2018.

\bibitem{holler18tgvm_mh}
K.~Bredies, M.~Holler, M.~Storath, and A.~Weinmann.
\newblock {Total Generalized Variation for manifold-valued data}.
\newblock {\em SIAM Journal on Imaging Sciences}, 11(3):1785--1848, 2018.

\bibitem{bredies2010tgv_mh}
K.~Bredies, K.~Kunisch, and T.~Pock.
\newblock Total generalized variation.
\newblock {\em SIAM Journal on Imaging Sciences}, 3(3):492--526, 2010.

\bibitem{cavaretta1991stationary}
A.~S. Cavaretta, W.~Dahmen, and C.~A. Micchelli.
\newblock {\em Stationary subdivision}, volume 453.
\newblock American Mathematical Soc., 1991.

\bibitem{chambolle1995image}
A.~Chambolle.
\newblock Image segmentation by variational methods: {M}umford and {S}hah
  functional and the discrete approximations.
\newblock {\em SIAM Journal on Applied Mathematics}, 55(3):827--863, 1995.

\bibitem{chambolle1999finite}
A.~Chambolle.
\newblock Finite-differences discretizations of the {Mumford-Shah} functional.
\newblock {\em ESAIM: Mathematical Modelling and Numerical Analysis},
  33(02):261--288, 1999.

\bibitem{chambolle2004algorithm}
A.~Chambolle.
\newblock An algorithm for total variation minimization and applications.
\newblock {\em Journal of Mathematical Imaging and Vision}, 20:89--97, 2004.

\bibitem{chambolle1998nonlinear}
A.~Chambolle, R.~De~Vore, N.~Lee, and B.~Lucier.
\newblock Nonlinear wavelet image processing: variational problems,
  compression, and noise removal through wavelet shrinkage.
\newblock {\em IEEE Transactions on Image Processing}, 7(3):319--335, 1998.

\bibitem{chambolle1997inftv_mh}
A.~Chambolle and P.-L. Lions.
\newblock Image recovery via total variation minimization and related problems.
\newblock {\em Numerische Mathematik}, 76(2):167--188, 1997.

\bibitem{chambolle2011first}
A.~Chambolle and T.~Pock.
\newblock A first-order primal-dual algorithm for convex problems with
  applications to imaging.
\newblock {\em Journal of Mathematical Imaging and Vision}, 40:120--145, 2011.

\bibitem{chan2005aspects}
T.~Chan and S.~Esedoglu.
\newblock Aspects of total variation regularized {$L^1$} function
  approximation.
\newblock {\em SIAM Journal on Applied Mathematics}, 65:1817--1837, 2005.

\bibitem{chan2001total}
T.~Chan, S.~Kang, and J.~Shen.
\newblock Total variation denoising and enhancement of color images based on
  the {CB} and {HSV} color models.
\newblock {\em Journal of Visual Communication and Image Representation},
  12:422--435, 2001.

\bibitem{cheng2012efficient}
G.~Cheng, H.~Salehian, and B.~Vemuri.
\newblock Efficient recursive algorithms for computing the mean diffusion
  tensor and applications to {DTI} segmentation.
\newblock In {\em Computer Vision--ECCV 2012}, pages 390--401. Springer, 2012.

\bibitem{cook2006camino}
P.~Cook, Y.~Bai, S.~Nedjati-Gilani, K.~Seunarine, M.~Hall, G.~Parker, and
  D.~Alexander.
\newblock Camino: Open-source diffusion-{MRI} reconstruction and processing.
\newblock In {\em 14th Scientific Meeting of the International Society for
  Magnetic Resonance in Medicine}, page 2759, 2006.

\bibitem{CS13}
D.~Cremers and E.~Strekalovskiy.
\newblock Total cyclic variation and generalizations.
\newblock {\em Journal of Mathematical Imaging and Vision}, 47(3):258--277,
  2013.

\bibitem{demengel1984bounded_heasian_mh}
F.~Demengel.
\newblock Fonctionsa hessien born{\'e}.
\newblock In {\em Annales de l’institut Fourier}, volume~34, pages 155--190,
  1984.

\bibitem{donoho1992interpolating}
D.~Donoho.
\newblock Interpolating wavelet transforms.
\newblock {\em Preprint, Department of Statistics, Stanford University}, 2(3),
  1992.

\bibitem{donoho1995noising}
D.~Donoho.
\newblock De-noising by soft-thresholding.
\newblock {\em IEEE Transactions on Information Theory}, 41(3):613--627, 1995.

\bibitem{drummond2002real}
T.~Drummond and R.~Cipolla.
\newblock Real-time visual tracking of complex structures.
\newblock {\em IEEE Transactions on Pattern Analysis and Machine Intelligence},
  24:932--946, 2002.

\bibitem{Duran2016color_tv}
J.~Duran, M.~M{\"o}ller, C.~Sbert, and D.~Cremers.
\newblock Collaborative total variation: a general framework for vectorial tv
  models.
\newblock {\em SIAM Journal on Imaging Sciences}, 9(1):116--151, 2016.

\bibitem{engl1996regularization}
H.~W. Engl, M.~Hanke, and A.~Neubauer.
\newblock {\em Regularization of inverse problems}, volume 375.
\newblock Springer Science \& Business Media, 1996.

\bibitem{ferreira1998subgradient_mh}
O.~Ferreira and P.~Oliveira.
\newblock Subgradient algorithm on riemannian manifolds.
\newblock {\em Journal of Optimization Theory and Applications}, 97(1):93--104,
  1998.

\bibitem{ferreira2002proximal}
O.~Ferreira and P.~Oliveira.
\newblock Proximal point algorithm on {R}iemannian manifolds.
\newblock {\em Optimization}, 51:257--270, 2002.

\bibitem{ferreira2013newton}
R.~Ferreira, J.~Xavier, J.~Costeira, and V.~Barroso.
\newblock Newton algorithms for {R}iemannian distance related problems on
  connected locally symmetric manifolds.
\newblock {\em IEEE Journal of Selected Topics in Signal Processing},
  7:634--645, 2013.

\bibitem{fletcher2007riemannian}
P.~Fletcher and S.~Joshi.
\newblock {R}iemannian geometry for the statistical analysis of diffusion
  tensor data.
\newblock {\em Signal Processing}, 87:250--262, 2007.

\bibitem{fornasier2013existence}
M.~Fornasier, R.~March, and F.~Solombrino.
\newblock Existence of minimizers of the {M}umford-{S}hah functional with
  singular operators and unbounded data.
\newblock {\em Annali di Matematica Pura ed Applicata}, 192(3):361--391, 2013.

\bibitem{fornasier2010iterative}
M.~Fornasier and R.~Ward.
\newblock Iterative thresholding meets free-discontinuity problems.
\newblock {\em Foundations of Computational Mathematics}, 10(5):527--567, 2010.

\bibitem{friedrich2008complexity}
F.~Friedrich, A.~Kempe, V.~Liebscher, and G.~Winkler.
\newblock Complexity penalized {M}-estimation.
\newblock {\em Journal of Computational and Graphical Statistics},
  17(1):201--224, 2008.

\bibitem{geman1984stochastic}
S.~Geman and D.~Geman.
\newblock Stochastic relaxation, {Gibbs} distributions, and the {Bayesian}
  restoration of images.
\newblock {\em IEEE Transactions on Pattern Analysis and Machine Intelligence},
  6(6):721--741, 1984.

\bibitem{getreuer2012rudin}
P.~Getreuer.
\newblock {Rudin-Osher-Fatemi total variation denoising using split Bregman}.
\newblock {\em {Image Processing On Line}}, 2012.
\newblock http://dx.doi.org/10.5201/ipol.2012.g-tvd.

\bibitem{GMS93}
M.~Giaquinta, G.~Modica, and J.~Sou{\v c}ek.
\newblock Variational problems for maps of bounded variation with values in
  {$S^1$}.
\newblock {\em Calculus of Variations and Partial Differential Equations},
  1(1):87--121, 1993.

\bibitem{GM06}
M.~Giaquinta and D.~Mucci.
\newblock The {BV}-energy of maps into a manifold: relaxation and density
  results.
\newblock {\em Annali della Scuola Normale Superiore di Pisa-Classe di
  Scienze}, 5(4):483--548, 2006.

\bibitem{goldstein2009split}
T.~Goldstein and S.~Osher.
\newblock The split {B}regman method for {$L^{1}$}-regularized problems.
\newblock {\em SIAM Journal on Imaging Sciences}, 2:323--343, 2009.

\bibitem{gousseau2001natural}
Y.~Gousseau and J.-M. Morel.
\newblock Are natural images of bounded variation?
\newblock {\em SIAM Journal on Mathematical Analysis}, 33:634--648, 2001.

\bibitem{green2006bayesian}
P.~Green and K.~Mardia.
\newblock {B}ayesian alignment using hierarchical models, with applications in
  protein bioinformatics.
\newblock {\em Biometrika}, 93:235--254, 2006.

\bibitem{grohs2008smoothness}
P.~Grohs.
\newblock Smoothness analysis of subdivision schemes on regular grids by
  proximity.
\newblock {\em SIAM Journal on Numerical Analysis}, 46(4):2169--2182, 2008.

\bibitem{grohs2010stability}
P.~Grohs.
\newblock Stability of manifold-valued subdivision schemes and multiscale
  transformations.
\newblock {\em Constructive Approximation}, 32(3):569--596, 2010.

\bibitem{grohs2016total}
P.~Grohs and M.~Sprecher.
\newblock Total variation regularization on {R}iemannian manifolds by
  iteratively reweighted minimization.
\newblock {\em Information and Inference}, 5(4):353--378, 2016.

\bibitem{grohs2009interpolatory}
P.~Grohs and J.~Wallner.
\newblock Interpolatory wavelets for manifold-valued data.
\newblock {\em Applied and Computational Harmonic Analysis}, 27(3):325--333,
  2009.

\bibitem{hinterberger2006boundedhessian_mh}
W.~Hinterberger and O.~Scherzer.
\newblock Variational methods on the space of functions of bounded {H}essian
  for convexification and denoising.
\newblock {\em Computing}, 76(1-2):109--133, 2006.

\bibitem{hohm2015algorithmic}
K.~Hohm, M.~Storath, and A.~Weinmann.
\newblock An algorithmic framework for {M}umford-{S}hah regularization of
  inverse problems in imaging.
\newblock {\em Inverse Problems}, 2015.
\newblock to appear.

\bibitem{itoh1998cut_locus_dimension_mh}
J.~Itoh and M.~Tanaka.
\newblock The dimension of a cut locus on a smooth {R}iemannian manifold.
\newblock {\em Tohoku Mathematical Journal, Second Series}, 50(4):571--575,
  1998.

\bibitem{jiang2014regularizing}
M.~Jiang, P.~Maass, and T.~Page.
\newblock Regularizing properties of the {Mumford-Shah} functional for imaging
  applications.
\newblock {\em Inverse Problems}, 30(3):035007, 2014.

\bibitem{johansen2009diffusion}
H.~Johansen-Berg and T.~Behrens.
\newblock {\em Diffusion MRI: From quantitative measurement to in-vivo
  neuroanatomy}.
\newblock Academic Press, London, 2009.

\bibitem{karcher1977riemannian}
H.~Karcher.
\newblock {R}iemannian center of mass and mollifier smoothing.
\newblock {\em Communications on Pure and Applied Mathematics}, 30:509--541,
  1977.

\bibitem{kendall1990probability}
W.~Kendall.
\newblock Probability, convexity, and harmonic maps with small image {I}:
  uniqueness and fine existence.
\newblock {\em Proceedings of the London Mathematical Society}, 3:371--406,
  1990.

\bibitem{kheyfets2000schild_mh}
A.~Kheyfets, W.~A. Miller, and G.~A. Newton.
\newblock Schild's ladder parallel transport procedure for an arbitrary
  connection.
\newblock {\em Internat. J. Theoret. Phys.}, 39(12):2891--2898, 2000.

\bibitem{killick2012optimal}
R.~Killick, P.~Fearnhead, and I.~Eckley.
\newblock Optimal detection of changepoints with a linear computational cost.
\newblock {\em Journal of the American Statistical Association},
  107(500):1590--1598, 2012.

\bibitem{LSKC13}
J.~Lellmann, E.~Strekalovskiy, S.~Koetter, and D.~Cremers.
\newblock Total variation regularization for functions with values in a
  manifold.
\newblock In {\em International Conference on Computer Vision (ICCV)}, pages
  2944--2951, 2013.

\bibitem{lorenzi14pole_ladder_mh}
M.~Lorenzi and X.~Pennec.
\newblock Efficient parallel transport of deformations in time series of
  images: from schild’s to pole ladder.
\newblock {\em Journal of mathematical imaging and vision}, 50(1-2):5--17,
  2014.

\bibitem{massonnet1998radar}
D.~Massonnet and K.~Feigl.
\newblock Radar interferometry and its application to changes in the earth's
  surface.
\newblock {\em Reviews of Geophysics}, 36(4):441--500, 1998.

\bibitem{Michor07}
P.~Michor and D.~Mumford.
\newblock An overview of the {R}iemannian metrics on spaces of curves using the
  {H}amiltonian approach.
\newblock {\em Applied and Computational Harmonic Analysis}, 23(1):74 -- 113,
  2007.

\bibitem{moreau1962fonctions}
J.-J. Moreau.
\newblock Fonctions convexes duales et points proximaux dans un espace
  hilbertien.
\newblock {\em Comptes Rendus de l'Académie des Sciences. Series A
  Mathematics.}, 255:2897--2899, 1962.

\bibitem{mumford1985boundary}
D.~Mumford and J.~Shah.
\newblock Boundary detection by minimizing functionals.
\newblock In {\em IEEE Conference on Computer Vision and Pattern Recognition},
  volume~17, pages 137--154, 1985.

\bibitem{mumford1989optimal}
D.~Mumford and J.~Shah.
\newblock Optimal approximations by piecewise smooth functions and associated
  variational problems.
\newblock {\em Communications on Pure and Applied Mathematics}, 42(5):577--685,
  1989.

\bibitem{nikolova2002minimizers}
M.~Nikolova.
\newblock Minimizers of cost-functions involving nonsmooth data-fidelity terms.
  {A}pplication to the processing of outliers.
\newblock {\em SIAM Journal on Numerical Analysis}, 40:965--994, 2002.

\bibitem{nikolova2004variational}
M.~Nikolova.
\newblock A variational approach to remove outliers and impulse noise.
\newblock {\em Journal of Mathematical Imaging and Vision}, 20:99--120, 2004.

\bibitem{pennec2006riemannian}
X.~Pennec, P.~Fillard, and N.~Ayache.
\newblock A {R}iemannian framework for tensor computing.
\newblock {\em International Journal of Computer Vision}, 66:41--66, 2006.

\bibitem{pock2009algorithm}
T.~Pock, D.~Cremers, H.~Bischof, and A.~Chambolle.
\newblock An algorithm for minimizing the {Mumford-Shah} functional.
\newblock In {\em IEEE International Conference on Computer Vision and Pattern
  Recognition}, pages 1133--1140, 2009.

\bibitem{potts1952some}
R.~Potts.
\newblock Some generalized order-disorder transformations.
\newblock {\em Mathematical Proceedings of the Cambridge Philosophical
  Society}, 48(01):106--109, 1952.

\bibitem{rezakhaniha2012experimental}
R.~Rezakhaniha, A.~Agianniotis, J.~Schrauwen, A.~Griffa, D.~Sage, C.~Bouten,
  F.~Van~de Vosse, M.~Unser, and N.~Stergiopulos.
\newblock Experimental investigation of collagen waviness and orientation in
  the arterial adventitia using confocal laser scanning microscopy.
\newblock {\em Biomechanics and Modeling in Mechanobiology}, 11:461--473, 2012.

\bibitem{rocca1997overview}
F.~Rocca, C.~Prati, and A.~Ferretti.
\newblock An overview of {SAR} interferometry.
\newblock In {\em Proceedings of the $3$rd {ERS} Symposium on Space at the
  Service of our Environment, Florence}, 1997.

\bibitem{rodriguez2006iteratively}
P.~Rodriguez and B.~Wohlberg.
\newblock An iteratively reweighted norm algorithm for total variation
  regularization.
\newblock In {\em IEEE Conference on Signals, Systems and Computers}, pages
  892--896, 2006.

\bibitem{rosman2012group}
G.~Rosman, M.~Bronstein, A.~Bronstein, A.~Wolf, and R.~Kimmel.
\newblock Group-valued regularization framework for motion segmentation of
  dynamic non-rigid shapes.
\newblock In {\em Scale Space and Variational Methods in Computer Vision},
  pages 725--736. Springer, 2012.

\bibitem{rudin1992nonlinear}
L.~Rudin, S.~Osher, and E.~Fatemi.
\newblock Nonlinear total variation based noise removal algorithms.
\newblock {\em Physica D: Nonlinear Phenomena}, 60(1):259--268, 1992.

\bibitem{sapiro1996anisotropic}
G.~Sapiro and D.~L. Ringach.
\newblock Anisotropic diffusion of multivalued images with applications to
  color filtering.
\newblock {\em IEEE transactions on image processing}, 5(11):1582--1586, 1996.

\bibitem{stefanoiu2016joint}
A.~Stefanoiu, A.~Weinmann, M.~Storath, N.~Navab, and M.~Baust.
\newblock Joint segmentation and shape regularization with a generalized
  forward--backward algorithm.
\newblock {\em IEEE Transactions on Image Processing}, 25(7):3384--3394, 2016.

\bibitem{storath2014fast}
M.~Storath and A.~Weinmann.
\newblock Fast partitioning of vector-valued images.
\newblock {\em SIAM Journal on Imaging Sciences}, 7(3):1826--1852, 2014.

\bibitem{storath2018variational}
M.~Storath and A.~Weinmann.
\newblock Variational regularization of inverse problems for manifold-valued
  data.
\newblock {\em arXiv preprint arXiv:1804.10432}, 2018.

\bibitem{storath2018wavelet}
M.~Storath and A.~Weinmann.
\newblock Wavelet sparse regularization for manifold-valued data.
\newblock {\em arXiv preprint arXiv:1808.00505}, 2018.

\bibitem{storath2014jump}
M.~Storath, A.~Weinmann, and L.~Demaret.
\newblock Jump-sparse and sparse recovery using {P}otts functionals.
\newblock {\em IEEE Transactions on Signal Processing}, 62(14):3654--3666,
  2014.

\bibitem{storath2014joint}
M.~Storath, A.~Weinmann, J.~Frikel, and M.~Unser.
\newblock Joint image reconstruction and segmentation using the {P}otts model.
\newblock {\em Inverse Problems}, 31(2):025003, 2014.

\bibitem{storath2016exact}
M.~Storath, A.~Weinmann, and M.~Unser.
\newblock Exact algorithms for {$L^1$}-{TV} regularization of real-valued or
  circle-valued signals.
\newblock {\em SIAM Journal on Scientific Computing}, 38(1):A614--A630, 2016.

\bibitem{storath2017jump}
M.~Storath, A.~Weinmann, and M.~Unser.
\newblock Jump-penalized least absolute values estimation of scalar or
  circle-valued signals.
\newblock {\em Information and Inference}, 6(3):225--245, 2017.

\bibitem{SC11}
E.~Strekalovskiy and D.~Cremers.
\newblock {Total variation for cyclic structures: Convex relaxation and
  efficient minimization}.
\newblock In {\em IEEE Conference on Computer Vision and Pattern Recognition
  (CVPR)}, pages 1905--1911, 2011.

\bibitem{strong2003edge}
D.~Strong and T.~Chan.
\newblock Edge-preserving and scale-dependent properties of total variation
  regularization.
\newblock {\em Inverse Problems}, 19:S165, 2003.

\bibitem{thiel1997ers}
K.~Thiel, X.~Wu, and P.~Hartl.
\newblock {ERS}-tandem-interferometric observation of volcanic activities in
  {I}celand.
\newblock {\em ESA SP}, pages 475--480, 1997.

\bibitem{tibshirani1996regression}
R.~Tibshirani.
\newblock Regression shrinkage and selection via the lasso.
\newblock {\em Journal of the Royal Statistical Society. Series B
  (Methodological)}, pages 267--288, 1996.

\bibitem{tsai2001curve}
A.~Tsai, A.~Yezzi~Jr, and A.~Willsky.
\newblock Curve evolution implementation of the {M}umford-{S}hah functional for
  image segmentation, denoising, interpolation, and magnification.
\newblock {\em IEEE Transactions on Image Processing}, 10(8):1169--1186, 2001.

\bibitem{rahman2005multiscale}
I.~Ur~Rahman, V.~Drori, I.and~Stodden, D.~Donoho, and P.~Schr{\"o}der.
\newblock Multiscale representations for manifold-valued data.
\newblock {\em Multiscale Modeling \& Simulation}, 4(4):1201--1232, 2005.

\bibitem{veksler1999efficient}
O.~Veksler.
\newblock {\em Efficient graph-based energy minimization methods in computer
  vision}.
\newblock PhD thesis, Cornell University, 1999.

\bibitem{Vogt2019lifting_mh}
T.~Vogt, E.~Strekalovskiy, D.~Cremers, and J.~Lellmann.
\newblock Lifting methods for manifold-valued variational problems.
\newblock {\em arXiv preprint arXiv:1908.03776}, 2019.

\bibitem{wallner2019geometric_mh}
J.~Wallner.
\newblock Geometric subdivision and multiscale transforms.
\newblock {\em arXiv preprint arXiv:1907.07550}, 2019.

\bibitem{wallner2005convergence}
J.~Wallner and N.~Dyn.
\newblock Convergence and {C1} analysis of subdivision schemes on manifolds by
  proximity.
\newblock {\em Computer Aided Geometric Design}, 22(7):593--622, 2005.

\bibitem{wallner2011convergence}
J.~Wallner, E.~Yazdani, and A.~Weinmann.
\newblock Convergence and smoothness analysis of subdivision rules in
  {R}iemannian and symmetric spaces.
\newblock {\em Advances in Computational Mathematics}, 34(2):201--218, 2011.

\bibitem{wang2004affine}
Z.~Wang and B.~Vemuri.
\newblock An affine invariant tensor dissimilarity measure and its applications
  to tensor-valued image segmentation.
\newblock In {\em IEEE Conference on Computer Vision and Pattern Recognition.},
  pages I228--I233, 2004.

\bibitem{wang2005dti}
Z.~Wang and B.~Vemuri.
\newblock {D}{T}{I} segmentation using an information theoretic tensor
  dissimilarity measure.
\newblock {\em IEEE Transactions on Medical Imaging}, 24(10):1267--1277, 2005.

\bibitem{weaver1991filtering}
J.~Weaver, Y.~Xu, D.~Healy, and L.~Cromwell.
\newblock Filtering noise from images with wavelet transforms.
\newblock {\em Magnetic Resonance in Medicine}, 21(2):288--295, 1991.

\bibitem{weinmannConstrApprox}
A.~Weinmann.
\newblock Nonlinear subdivision schemes on irregular meshes.
\newblock {\em Constructive Approximation}, 31(3):395--415, 2010.

\bibitem{weinmann2012interpolatory}
A.~Weinmann.
\newblock Interpolatory multiscale representation for functions between
  manifolds.
\newblock {\em SIAM Journal on Mathematical Analysis}, 44:162--191, 2012.

\bibitem{weinmann2014total}
A.~Weinmann, L.~Demaret, and M.~Storath.
\newblock Total variation regularization for manifold-valued data.
\newblock {\em SIAM Journal on Imaging Sciences}, 7(4):2226--2257, 2014.

\bibitem{weinmann2015mumford}
A.~Weinmann, L.~Demaret, and M.~Storath.
\newblock {M}umford--{S}hah and {P}otts regularization for manifold-valued
  data.
\newblock {\em Journal of Mathematical Imaging and Vision}, 55(3):428--445,
  2016.

\bibitem{weinmann2014l1potts}
A.~Weinmann, M.~Storath, and L.~Demaret.
\newblock The ${L}^1$-{P}otts functional for robust jump-sparse reconstruction.
\newblock {\em SIAM Journal on Numerical Analysis}, 53(1):644--673, 2015.

\bibitem{winkler2002smoothers}
G.~Winkler and V.~Liebscher.
\newblock Smoothers for discontinuous signals.
\newblock {\em Journal of Nonparametric Statistics}, 14(1-2):203--222, 2002.

\bibitem{wittich2008complexity}
O.~Wittich, A.~Kempe, G.~Winkler, and V.~Liebscher.
\newblock Complexity penalized least squares estimators: Analytical results.
\newblock {\em Mathematische Nachrichten}, 281(4):582--595, 2008.

\bibitem{xie2008smoothness}
G.~Xie and T.~Yu.
\newblock Smoothness equivalence properties of general manifold-valued data
  subdivision schemes.
\newblock {\em Multiscale Modeling $\&$ Simulation}, 7(3):1073--1100, 2008.

\end{thebibliography}

\end{document}